\newtheorem{theorem}{Theorem}[section]
\newtheorem{corollary}[theorem]{Corollary}
\newtheorem{lemma}[theorem]{Lemma}
\newtheorem{problem}[theorem]{Problem}
\newtheorem{proposition}[theorem]{Proposition}
\def\11{\textbf{$1$}}
\def\CC{{\mathbb{C}}}
\title[Preservers of $\lambda$-Aluthge transforms]{Preservers of $\lambda$-Aluthge transforms}
\author[A. Ben Ali Essaleh]{Ahlem Ben Ali Essaleh}
\email{ahlem.benalisaleh@gmail.com}
\address{D{\'e}partement de Math{\'e}matiques, Institut Pr{\'e}paratoire aux Etudes d'Ing{\'e}nieurs de Gafsa, Universit{\'e} de Gafsa, 2112 Gafsa, Tunisia.}
\author[A.M. Peralta]{Antonio M. Peralta}
\email{aperalta@ugr.es}
\address{Departamento de An{\'a}lisis Matem{\'a}tico, Facultad de
Ciencias, Universidad de Granada, 18071 Granada, Spain.}
\keywords{Special transforms, $\lambda$-Aluthge transform, polar decomposition, preservers, complex linear and conjugate linear $^*$-isomorphisms, complex linear and conjugate linear Jordan $^*$-isomorphisms}
\subjclass[2010]{44A15, 47B49, 46L40, 46C05, 47A55, 47B37, 47L99}
\begin{document}

\maketitle

\begin{abstract} Let $M$ and $N$ be arbitrary von Neumann algebras. For any $a$ in $M$ or in $N$, let $\Delta_{\lambda}(a)$ denote the $\lambda$-Aluthge transform of $a$. Suppose that $M$ has no abelian direct summand. We prove that every bijective map $\Phi:M\to N$ satisfying $$\Phi(\Delta_{\lambda}(a\circ b^*))=\Delta_{\lambda}(\Phi(a) \circ \Phi(b)^*), \hbox{ for all } a,\;b\in M,$$ (for a fixed $\lambda\in [0,1]$), maps the hermitian part of $M$ onto the hermitian part of $N$ (i.e. $\Phi (M_{sa}) = N_{sa}$) and its restriction $\Phi|_{M_{sa}} : M_{sa}\to N_{sa}$ is a Jordan isomorphism. If we also assume that $\Phi (x +i y ) = \Phi (x) +\Phi (i y)$ for all $x,y\in M_{sa}$, then there exists a central projection $p_c$ in $M$ such that $\Phi|_{p_c M}$ is a complex linear Jordan $^*$-isomorphism and $\Phi|_{(\textbf{1}-p_c) M}$ is a conjugate linear Jordan $^*$-isomorphism.

Given two complex Hilbert spaces $H$ and $K$ with dim$(H)\geq 2$, we also establish that every bijection $\Phi: \mathcal{B}(H)\to \mathcal{B}(K)$ satisfying $$\Phi(\Delta_{\lambda}(a b^*))=\Delta_{\lambda}(\Phi(a) \Phi(b)^*), \hbox{ for all } a,\;b\in \mathcal{B}(H),$$ must be a complex linear or a conjugate linear $^*$-isomorphism.
\end{abstract}

\section{Introduction}

Let $a$ be an element in a von Neumann algebra $M$. Let $a = u |a|$ be the polar decomposition of $a$ in $M$, where $u$ is a partial isometry in $M$, $|a|= (a^* a)^{\frac12}$, and $u^* u$ is the range projection of $|a|$ (see \cite[Theorem 1.12.1]{S}). Given $\lambda \in[0,1],$ the \emph{$\lambda$-Aluthge transform} of $a$ is defined by
$$\Delta_{\lambda}(a)=|a|^{\lambda} u |a|^{1-\lambda}.$$ It should be be noted that the polar decomposition does not change when computed with respect to any von Neumann subalgebra $N$ of $M$ containing the element $a,$ actually $u$ is precisely the limit in the weak$^*$-topology of $M$ of the sequence $\left(a (\frac1n + |a|)^{-1}\right)_n,$ and hence $u \in N$ (compare \cite[Theorem 1.12.1]{S} or \cite{Ped}). For $\lambda=0$ we have $\Delta_{0}(a) =a$, while for $\lambda = \frac12$ we rediscover the original transform introduced by A. Aluthge in \cite{Aluthge90}. So, we can understand $\Delta_0$ as the identity transform on every C$^*$-algebra. \smallskip

It should be remarked here that any complex linear or conjugate linear $^*$-isomorphism $T$ between von Neumann algebras $M$ and $N$ preserves $\lambda$-Aluthge transforms, that is, \begin{equation}\label{eq linear or conjugate linear *isomorphisms preserve lambda Aluthge transforms} T(\Delta_{\lambda}(a)) = \Delta_{\lambda}(T(a)) \hbox{ for every } a\in M.
\end{equation}

Recently, in \cite{BoteMolNag2016} F. Botelho, L. Moln{\'a}r and G. Nagy studied those linear bijections between von Neumann factors preserving $\lambda$-Aluthge transforms. The concrete result reads as follows: Let $\mathcal{A}$ and $\mathcal{B}$ be von Neumann factors, $\lambda$ a fixed real number in $(0,1]$, and $\Phi : \mathcal{A}\to \mathcal{B}$ a linear bijection. Then the following assertions hold:
\begin{enumerate}[$(a)$]\item If $\mathcal{A}$ is not of type I$_2$, then $\Phi$ preserves the $\lambda$-Aluthge transform (i.e. $\Phi ( \Delta_{\lambda}(a)) = \Delta_{\lambda}(\Phi(a))$ for all $a\in \mathcal{A}$) if and only if there is a $^*$-isomorphism $\Theta : \mathcal{A}\to \mathcal{B}$ and a nonzero scalar $c\in \mathbb{C}$ such that $\Phi (a) = c \Theta (a)$, for all $a\in \mathcal{A}$;
\item If $\mathcal{A}$ is of type I$_2$, then $\mathcal{B}$ is also of type I$_2$ and, without loss of generality, we can assume
that they both coincide with the algebra of all 2 by 2 complex matrices. Then $\Phi$ preserves the $\lambda$-Aluthge transform if
and only if it is either of the form $$ \Phi(a) = c \ v a v^*, \hbox{ for all } a\in \mathcal{A},$$ or of the form
$$\Phi ( a ) = c \left(v a^t v^* - (\hbox{Tr}(a)) \textbf{1}\right), \hbox{ for all } a\in \mathcal{A},$$
where $v$ is a unitary, $c$ is a nonzero scalar, $^t$ stands for the transpose and Tr stands for the
usual trace functional on matrices.
\end{enumerate}

Let $H$ and $K$ be complex Hilbert spaces. When preservers of the $\lambda$-Aluthge transforms are particularized to bijections between von Neumann factors of the form $\mathcal{B}(H)$ and $\mathcal{B}(K)$, F. Chabbabi relaxes the hypothesis concerning the linearity of the mapping at the cost of assuming certain commutativity of the mapping and the $\lambda$-Aluthge transform on products. More concretely, for a given $\lambda\in (0,1)$ and dim$(H)\geq 2$, a bijective mapping $\Phi : \mathcal{B}(H)\to \mathcal{B}(K)$ satisfies \begin{equation}\label{eq product commuting maps} \Phi(\Delta_{\lambda}(a b))=\Delta_{\lambda}(\Phi(a) \Phi(b)),\ \  \text{ for all } a,\;b\in \mathcal{B}(H),
 \end{equation} if and only if $\Phi$ is a complex linear or conjugate linear $^*$-isomorphism (see \cite{Chabbabi2017product}).\smallskip

If in \eqref{eq product commuting maps} the usual product is replaced with the natural Jordan product, $a \circ b := \frac12 ( a b + ba)$, F. Chabbabi and M. Mbekhta establish in \cite[Theorem 1.1]{ChabbabiMbekhta2017Jordan} that, for a given $\lambda\in (0,1)$ and dim$(H)\geq 2$, a bijective mapping $\Phi : \mathcal{B}(H)\to \mathcal{B}(K)$ satisfies \begin{equation}\label{eq Jordan product commuting maps} \Phi(\Delta_{\lambda}(a\circ b))=\Delta_{\lambda}(\Phi(a)\circ \Phi(b)),\ \  \text{ for all } a,\;b\in \mathcal{B}(H),
 \end{equation} (respectively, \begin{equation}\label{eq Jordan *product commuting maps} \Phi(\Delta_{\lambda}(a\circ b^*))=\Delta_{\lambda}(\Phi(a)\circ \Phi(b)^*),\ \  \text{ for all } a,\;b\in \mathcal{B}(H),)
 \end{equation} if and only if $\Phi$ is a complex linear or conjugate linear $^*$-isomorphism.\smallskip

For some reason which remains unknown to these authors, F. Chabbabi and M. Mbekhta do not consider in \cite{Chabbabi2017product, ChabbabiMbekhta2017Mediterr, ChabbabiMbekhta2017Jordan} bijections commuting with the $\lambda$-Aluthge on more natural C$^*$-products in the following sense: Let $\Phi: M\to N$ be a bijection between von Neumann algebras satisfying the following property
\begin{equation}\label{eq1.1}
\Phi(\Delta_{\lambda}(a b^*))=\Delta_{\lambda}(\Phi(a) \Phi(b)^*),\quad \text{ for all }a,\;b\in M,
\end{equation} for some fixed $\lambda \in [0,1].$\smallskip

The study on maps preserving certain $\lambda$-Aluthge transforms can be connected with another interesting line about preservers. Let $M$ and $N$ be von Neumann algebras. In the case $\lambda =0$ the bijections $\Phi : M\to N$ satisfying \eqref{eq product commuting maps} or \eqref{eq Jordan product commuting maps} and the property $\Phi (a^* ) =\Phi (a)^*$, for all $a$ in $M,$ have been studied and described by J. Hakeda \cite{Hak86a, Hak86b} and J. Hakeda and K. Sait\^{o} \cite{HakSaito86}. Let us revisit their main conclusions.\smallskip

According to the usual notation (see \cite{Hopenwasser73}), a \emph{system of $n \times n$ matrix units} in a unital C$^*$-algebra $A$ with unit $\textbf{1}$ is a family of elements $\{u_{ij}: i,j=1,\ldots,n \}\subset A$ satisfying the following properties:\begin{enumerate}[$(a)$]\item $u_{ij}^* = u_{ji}$ for all $i,j$;
\item $u_{ij} u_{kl} = \delta{jk} u_{il}$ for all $i,j,k,l$;
\item $\displaystyle \sum_{i=1}^n u_{ii} = \textbf{1}$.
\end{enumerate}

\begin{theorem}\label{thm Hakeda 86}\cite[Theorem 3.6 and Corollary 3.7]{Hak86a} Let $M$ and $N$ be C$^*$-algebras and $M$ has an $n \times n$ $(n \geq 2)$ matrix unit.
Let $\Phi : M\to N$ be a bijection satisfying $\Phi (x y ) = \Phi (x) \Phi (y)$ {\rm(}equivalently, $\Phi(\Delta_{0}(x y))=\Delta_{0}(\Phi(x) \Phi(y))$, that is, $\Phi$ satisfies \eqref{eq product commuting maps} with $\lambda=0${\rm)} and $\Phi (x^*) = \Phi (x)^*$, for all $x,y\in M$.  Then $\Phi$ is an orthogonal sum of a linear and a conjugate linear $^*$-isomorphism. The same conclusion hold when $M$ and $N$ are von Neumann algebras (or more generally, AW$^*$-algebras) and $M$ has
no abelian summand.$\hfill\Box$
\end{theorem}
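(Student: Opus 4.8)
The plan is to deduce the whole statement from the single non-trivial fact that $\Phi$ is \emph{additive}; both the inputs to that reduction and its consequences are soft. On the input side: $\Phi(0)=0$, and $\Phi(\textbf{1}_M)$ is a self-adjoint idempotent which, being a two-sided unit of $N=\Phi(M)$ by surjectivity, equals $\textbf{1}_N$; $\Phi$ carries projections to projections respecting orthogonality and order, carries central projections to central projections (if $z$ is central then $\Phi(z)$ commutes with $\Phi(M)=N$), and — once additivity is available — carries the positive cone $\{b^2:b=b^*\}$ onto the positive cone, since $\Phi(b^2)=\Phi(b)^2$, $\Phi$ preserves adjoints and $\Phi$ is bijective. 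Granting additivity, $\Phi$ is a ring $^*$-isomorphism; its restriction $M_{sa}\to N_{sa}$ is then an additive order isomorphism, hence $\mathbb{Q}$-linear, hence $\mathbb{R}$-linear by the Archimedean squeeze $pb\le tb\le qb$ through rationals $p<t<q$ (first for $b\ge 0$, then for general $b=b^*$). A ring isomorphism maps $Z(M)$ onto $Z(N)$, so $j:=\Phi(i\textbf{1}_M)$ is a central element of $N$ with $j^*=-j$ and $j^2=-\textbf{1}_N$; writing $Z(N)\cong C(\Omega)$ forces $j=i(2r-\textbf{1}_N)$ for a central projection $r$, and then $p_c:=\Phi^{-1}(r)$ is a central projection in $M$ with $\Phi(ib)=i\,\Phi(b)$ on $p_cM$ and $\Phi(ib)=-i\,\Phi(b)$ on $(\textbf{1}_M-p_c)M$. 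Together with $\mathbb{R}$-linearity on self-adjoint parts and $\Phi(a+ib)=\Phi(a)+j\Phi(b)$, this exhibits $\Phi|_{p_cM}$ as a complex linear $^*$-isomorphism and $\Phi|_{(\textbf{1}_M-p_c)M}$ as a conjugate linear $^*$-isomorphism onto the matching central summands of $N$; being bijective $^*$-homomorphisms these are automatically isometric, so we obtain exactly the asserted orthogonal sum.

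The real work is therefore additivity, and the route is the classical Peirce-decomposition technique of Martindale, powered by the matrix units. Writing $e_i:=u_{ii}$ and $M_{ij}:=e_iMe_j$, we have the vector-space decomposition $M=\bigoplus_{i,j=1}^{n}M_{ij}$ with $M_{ij}M_{kl}\subseteq\delta_{jk}M_{il}$, and in particular $M_{ij}M_{ij}=0$ for $i\neq j$. One proves successively: (1) additivity of $\Phi$ on each off-diagonal block $M_{ij}$, $i\neq j$, using factorisations of the shape $(e_i+a)(e_j+b)=a+b$ — legitimate precisely because $M_{ij}M_{ij}=0$ — after first determining $\Phi$ on elements $e_i+a$; (2) additivity across distinct Peirce blocks, $\Phi(a_{ij}+b_{kl})=\Phi(a_{ij})+\Phi(b_{kl})$, by analogous ``triangular'' factorisations; (3) additivity on the diagonal blocks $M_{ii}$, by conjugating into an off-diagonal corner via $x\mapsto u_{ji}xu_{ij}$ and invoking (2); and then one reassembles an arbitrary $x=\sum_{i,j}x_{ij}$. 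I expect step (3) to be the genuine obstacle: there $M_{ii}M_{ii}\neq 0$, so the ``square-zero'' trick is unavailable and one is forced to detour through the off-diagonal corners, and this is the point at which the full strength of the matrix-unit hypothesis is used — for $n=2$, where no spare index exists, one leans on the fact that $u_{12},u_{21}$ (with $u_{12}^*=u_{21}$) make the corner $M_{12}$ faithful, e.g.\ $au_{12}=0\Rightarrow e_1ae_1=0$.

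For the von Neumann (or AW$^*$) case with no abelian summand, the strategy is to obtain global additivity from the C$^*$-case by slicing $M$ centrally. Such an $M$ decomposes as $M=\overline{\bigoplus_k z_kM}$ over a family of central projections $z_k$ with $\sup_k z_k=\textbf{1}_M$, each $z_kM$ carrying a finite system of $m_k\times m_k$ matrix units ($m_k\ge 2$) summing to $z_k$: the type I$_m$ summands ($m$ finite) carry the obvious $m\times m$ systems, while the remaining part — type I$_\infty$ together with types II and III — can, on every central subprojection, be halved into two equivalent projections, giving a $2\times 2$ system. Since $\Phi$ sends central projections to central projections and restricts on each corner $zM$ to a multiplicative $^*$-preserving bijection onto $\Phi(z)N$ (surjectivity from $zxz\mapsto\Phi(z)\Phi(x)\Phi(z)$), the C$^*$-case already proved makes $\Phi$ additive on each $z_kM$. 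Global additivity then follows: for $x,y\in M$ one has $\Phi(z_k)\big(\Phi(x+y)-\Phi(x)-\Phi(y)\big)=\Phi(z_k(x+y))-\Phi(z_kx)-\Phi(z_ky)=0$ by multiplicativity and additivity on $z_kM$, and since $\sup_k\Phi(z_k)=\textbf{1}_N$ — any central projection orthogonal to every $\Phi(z_k)$ pulls back under $\Phi^{-1}$ to one orthogonal to every $z_k$, hence to $0$ — letting $k$ run gives $\Phi(x+y)=\Phi(x)+\Phi(y)$. With $\Phi$ additive on $M$, the reduction of the first paragraph applies verbatim and yields the orthogonal decomposition, completing the proof.
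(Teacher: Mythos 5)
This statement is quoted in the paper from Hakeda \cite[Theorem 3.6 and Corollary 3.7]{Hak86a} and is given no proof there (hence the terminal $\Box$), so there is no internal argument to compare against; what you have written is a reconstruction of Hakeda's own proof, and your skeleton is the right one. The reduction in your first paragraph (additivity $\Rightarrow$ ring $^*$-isomorphism $\Rightarrow$ $\mathbb{R}$-linearity on $M_{sa}$ by the order-squeeze $\Rightarrow$ splitting along the central projection $r$ with $\Phi(i\textbf{1})=i(2r-\textbf{1})$) is sound: all the auxiliary facts you invoke ($\Phi(0)=0$, unitality, preservation of projections, centrality and positivity, $j^{*}=-j$ via $\Phi(-i\textbf{1})=-\Phi(i\textbf{1})$ from additivity) do follow as you say, and this is exactly how the complex/conjugate-linear dichotomy is extracted in the literature. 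Your central-slicing argument for the von Neumann/AW$^*$ case — homogeneous type I$_{n}$ summands plus a halving of the remainder into a $2\times2$ system, additivity on each corner $z_kM\to\Phi(z_k)N$ from the C$^*$-case, then $\Phi(z_k)\bigl(\Phi(x+y)-\Phi(x)-\Phi(y)\bigr)=0$ and $\sup_k\Phi(z_k)=\textbf{1}$ — is likewise correct and is precisely the decomposition this paper itself reuses in the proof of Proposition \ref{p consequences preservation of jordan products for positive in h.4}$(j)$.

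The one place where your proposal is a plan rather than a proof is the Martindale step itself, and within it specifically the sub-lemma that $\Phi(e_i+a)=\Phi(e_i)+\Phi(a)$ for $a\in M_{ij}$, $i\neq j$. Everything downstream of that identity is routine (the factorisation $(e_i+a)(e_j+b)=a+b$, the cross-block and diagonal-block steps, the conjugation $x\mapsto u_{ji}xu_{ij}$), but the identity itself is where all the work in Martindale's method lives: one has to set $c=\Phi^{-1}\bigl(\Phi(e_i)+\Phi(a)\bigr)$ and pin down every Peirce component of $c$ relative to the $e_k$ by multiplying on both sides by elements of the various blocks, using injectivity at each stage — and this is a genuinely multi-step computation, not a one-line observation. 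You correctly flag the diagonal blocks and the $n=2$ case as the delicate points, but the off-diagonal bootstrap is equally nontrivial and is currently asserted in a subordinate clause (``after first determining $\Phi$ on elements $e_i+a$''). As a proof of a theorem the paper merely cites, your write-up is an accurate and well-organised map of the argument; to stand alone it would need that sub-lemma carried out in full.
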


\begin{theorem}\label{thm Hakeda Saito 86}\cite[THEOREM and COROLLARY]{HakSaito86} Let $M$ be a C$^*$-algebra, $N$ an associative $^*$-algebra, and suppose that $M$ has a system of $n \times n$  matrix units with $n \geq 2$.
Let $\Phi : M\to N$ be a bijection satisfying $\Phi (x \circ y ) = \Phi (x) \circ \Phi (y)$ {\rm(}equivalently, $\Phi(\Delta_{0}(x\circ y))=\Delta_{0}(\Phi(x) \circ \Phi(y))$, that is, $\Phi$ satisfies \eqref{eq Jordan product commuting maps} with $\lambda=0${\rm)} and $\Phi (x^*) = \Phi (x)^*$, for all $x,y\in M$.  Then $\Phi$ is a real linear Jordan $^*$-isomorphism. The same conclusion holds when $M$ is a von Neumann algebra {\rm(}or more generally an AW$^{*}$-algebra{\rm)} which has no abelian direct summand, and $N$ is a C$^{*}$-algebra.$\hfill\Box$
\end{theorem}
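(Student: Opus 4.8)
I sketch the route one would take (the argument of \cite{HakSaito86}), which follows the classical template for multiplicative‑type preservers: first upgrade the Jordan‑multiplicativity to additivity, then read off that $\Phi$ is an additive Jordan $^*$-isomorphism, and finally promote additivity to real‑linearity.

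I would work with a system of $n\times n$ matrix units $\{u_{ij}\}$ in $M$ (in the von Neumann case a $2\times 2$ system, available because $M$ has no abelian summand), and set $e_i:=u_{ii}$, so that the $e_i$ are orthogonal idempotents with $\sum_i e_i=\mathbf 1$. Applying $\Phi$ to the Jordan identities $e_i\circ e_i=e_i$ and $e_i\circ e_j=0$ $(i\neq j)$ shows that $f_i:=\Phi(e_i)$ are idempotents in $N$ with $f_i\circ f_j=0$; multiplying the relation $f_if_j+f_jf_i=0$ on the left and on the right by $f_i$ upgrades this to genuine orthogonality $f_if_j=f_jf_i=0$, and surjectivity forces $\Phi(\mathbf 1)=\mathbf 1$ together with $\sum_i f_i=\mathbf 1$. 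This produces matched Peirce decompositions $M=\bigoplus_{i,j}e_iMe_j$ and $N=\bigoplus_{i,j}f_iNf_j$.

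The technical heart is additivity. Using the Jordan operators $x\mapsto u_{ij}\circ x$ and $x\mapsto e_i\circ x$ as the tools for moving between Peirce components, I would show in turn that $\Phi$ respects the decompositions ($\Phi(e_iMe_j)\subseteq f_iNf_j$), that $\Phi$ is additive on each off‑diagonal corner $e_iMe_j$, then additive on the diagonal corners $e_iMe_i$, and finally additive on arbitrary sums of Peirce components, yielding $\Phi(x+y)=\Phi(x)+\Phi(y)$ for all $x,y\in M$. This is a Martindale‑type bookkeeping argument, and it is precisely here that the matrix‑unit (equivalently, no‑abelian‑summand) hypothesis is indispensable; the abelian case genuinely fails.

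Once additivity is in hand, $\Phi$ is an additive Jordan homomorphism, and together with $\Phi(x^*)=\Phi(x)^*$ and bijectivity it is an additive Jordan $^*$-isomorphism of $M$ onto $N$. For real‑linearity, additivity already gives $\mathbb Q$-linearity; and since $\Phi(x^2)=\Phi(x)^2$ carries positive elements (squares of self‑adjoints) to positive elements, $\Phi|_{M_{sa}}$ is additive and order‑preserving, hence monotone, hence bounded, hence continuous, hence $\mathbb R$-linear, and this propagates to all of $M$. (In the von Neumann statement, the orthogonal splitting into a linear and a conjugate‑linear summand is obtained by decomposing $N$ according to the central projection on which $\Phi(i\mathbf 1)$ equals $+i\mathbf 1$ versus $-i\mathbf 1$.) The main obstacle throughout is the additivity step: everything downstream is comparatively soft, but passing from ``$\Phi$ preserves $\circ$'' to ``$\Phi$ is additive'' requires the careful Peirce‑component analysis and the full force of the structural hypothesis on $M$.
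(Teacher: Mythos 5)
The paper quotes this theorem from Hakeda--Sait\^{o} \cite{HakSaito86} and gives no proof of its own, so the only internal point of comparison is Section 3, where the same machinery is re-implemented for the $\lambda$-Aluthge setting; your sketch follows exactly that route (Proposition \ref{p additivity on hermitian} is the additivity step, Lemma \ref{l additivity on hermitian gives lienarity on hermitian} the passage from additive-and-positive to real linear). Two details would need fixing before the outline could be executed. First, a von Neumann algebra with no abelian direct summand need not carry a single global $2\times 2$ system of matrix units (consider $\bigoplus_k M_{n_k}(\mathbb{C})$ with varying $n_k\geq 2$: the identity cannot be written as a sum of two equivalent projections in $M_3(\mathbb{C})$); the correct move, visible in the paper's proof of Proposition \ref{p consequences preservation of jordan products for positive in h.4}$(j)$, is to split $M$ by central projections into summands that are homogeneous of type $I_{n_k}$ or have no finite type I part, run the matrix-unit argument on each summand, and reassemble via weak$^*$-convergent sums. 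Second, the Jordan product alone cannot separate the corner $e_iMe_j$ from $e_jMe_i$, so the claim $\Phi(e_iMe_j)\subseteq f_iNf_j$ is not the right intermediate statement; the Peirce bookkeeping has to be done on the symmetrized components via the triple products $\{e_i,x,e_j\}$ and the maps $x\mapsto e_ixe_i=((2e_i-\mathbf{1})\circ x)\circ e_i$, exactly as in the paper's \eqref{eq preserves triple products of permitian and projections}. With those two corrections your outline is the standard and correct argument; as you yourself note, essentially all of the content sits in the additivity step, which the sketch names but does not carry out.
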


Let us fix a C$^*$-algebra $M$ admitting a system of $n \times n$  matrix units with $n \geq 2$, and another (unital) C$^*$-algebra $N$. If a bijection $\Phi : M\to N$ satisfies \eqref{eq1.1} (respectively, \eqref{eq Jordan *product commuting maps}) with $\lambda =0$, then $$\Phi (x y^* ) = \Phi (  \Delta_0(x y^* ) ) = \Delta_0(\Phi(x) \Phi(y)^*) = \Phi(x) \Phi(y)^*,$$ (respectively, $$\Phi (x \circ y^* ) = \Phi (  \Delta_0(x \circ y^* ) ) = \Delta_0(\Phi(x)\circ \Phi(y)^*) = \Phi(x) \circ \Phi(y)^*{\rm)},$$ for all $x,y\in M$. It can be easily deduced that, in this case we have $\Phi (x) = \Phi (x \textbf{1}^*) = \Phi (x) \Phi (\textbf{1})^*$ (respectively, $\Phi (x) = \Phi (x\circ \textbf{1}^*) = \Phi (x) \circ \Phi (\textbf{1})^*$), for all $x\in M$. This implies, in any case, that $\Phi (\textbf{1}) =\textbf{1}$, and a new application of \eqref{eq1.1} (respectively, \eqref{eq Jordan *product commuting maps}) gives $\Phi (x^*) = \Phi (\textbf{1} x^*) = \Phi (\textbf{1}) \Phi (x)^* = \Phi (x)^*$ (respectively, $\Phi (x^*) = \Phi (\textbf{1}\circ x^*) = \Phi (\textbf{1}) \circ \Phi (x)^* = \Phi(x)^*$), for all $x$ in $M$, and consequently, $\Phi (x y) = \Phi (x (y^*)^*) = \Phi (x) \Phi (y^*)^* = \Phi (x) \Phi (y)$ (respectively, $\Phi (x\circ y) = \Phi (x\circ (y^*)^*) = \Phi (x) \circ \Phi (y^*)^* = \Phi (x) \circ \Phi (y)$). Therefore, the next corollary is a straight consequence of the above Theorems \ref{thm Hakeda 86} and \ref{thm Hakeda Saito 86}.

\begin{corollary}\label{c Hakeda Saito 86 for (4) or (3)} Let $M$ and $N$ be C$^*$-algebras, and suppose that $M$ admits a system of $n \times n$  matrix units with $n \geq 2$.
Let $\Phi : M\to N$ be a bijection satisfying \eqref{eq1.1} {\rm(}respectively, \eqref{eq Jordan *product commuting maps}{\rm)} with $\lambda=0$. Then $\Phi$ is a real linear $^*$-isomorphism {\rm(}respectively, a real linear Jordan $^*$-isomorphism{\rm)}. The same conclusion holds when $M$ is a von Neumann algebra {\rm(}or more generally an AW$^{*}$-algebra{\rm)} which has no abelian direct summand, and $N$ is a C$^{*}$-algebra.
\end{corollary}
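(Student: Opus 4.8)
The plan is to reduce everything, for $\lambda=0$, to the theorems of Hakeda and Hakeda--Sait\^{o} recalled above. The starting point is that $\Delta_0$ is the identity transform on every C$^*$-algebra, so hypothesis~\eqref{eq1.1} with $\lambda=0$ reads $\Phi(x y^*)=\Phi(x)\Phi(y)^*$ for all $x,y\in M$, and~\eqref{eq Jordan *product commuting maps} with $\lambda=0$ reads $\Phi(x\circ y^*)=\Phi(x)\circ\Phi(y)^*$ for all $x,y\in M$. I would handle the two cases in parallel, since the argument is the same once the associative product is replaced by the Jordan product; in fact this is exactly the chain of substitutions indicated in the paragraph preceding the statement, which I now make explicit.

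First I would prove $\Phi(\textbf{1})=\textbf{1}$. Taking $y=\textbf{1}$ gives $\Phi(x)=\Phi(x)\Phi(\textbf{1})^*$ (resp.\ $\Phi(x)=\Phi(x)\circ\Phi(\textbf{1})^*$) for all $x\in M$, and, since $\Phi$ is onto, this becomes $z=z\,\Phi(\textbf{1})^*$ (resp.\ $z=z\circ\Phi(\textbf{1})^*$) for every $z\in N$; a short adjoint computation then shows that $\Phi(\textbf{1})$ is a self-adjoint two-sided unit of $N$, so $\Phi(\textbf{1})=\textbf{1}$. Next, taking $x=\textbf{1}$ gives $\Phi(y^*)=\Phi(\textbf{1})\Phi(y)^*=\Phi(y)^*$ (resp.\ $\Phi(y^*)=\Phi(\textbf{1})\circ\Phi(y)^*=\Phi(y)^*$), so $\Phi$ preserves adjoints. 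Finally, replacing $y$ by $y^*$ and using this last identity yields $\Phi(xy)=\Phi(x)\Phi(y)$ (resp.\ $\Phi(x\circ y)=\Phi(x)\circ\Phi(y)$) for all $x,y\in M$. Thus $\Phi$ is a bijective $^*$-homomorphism (resp.\ Jordan $^*$-homomorphism).

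With these properties established, the conclusion is immediate: in the associative case $\Phi$ satisfies the hypotheses of Theorem~\ref{thm Hakeda 86} and hence is an orthogonal sum of a linear and a conjugate linear $^*$-isomorphism, in particular a real linear $^*$-isomorphism; in the Jordan case $\Phi$ satisfies the hypotheses of Theorem~\ref{thm Hakeda Saito 86} and hence is a real linear Jordan $^*$-isomorphism. The same argument works verbatim when $M$ is a von Neumann algebra (or AW$^*$-algebra) with no abelian direct summand and $N$ is a C$^*$-algebra, because both theorems are available at that level of generality. There is no genuine obstacle in this corollary: the substantive content lies entirely in Theorems~\ref{thm Hakeda 86} and~\ref{thm Hakeda Saito 86}, and the only step requiring a brief justification is the identity $\Phi(\textbf{1})=\textbf{1}$, which rests on surjectivity of $\Phi$ together with the elementary fact that a C$^*$-algebra possessing a one-sided unit is unital.
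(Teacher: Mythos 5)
Your proposal is correct and follows essentially the same route as the paper: the paper's own justification (given in the paragraph immediately preceding the corollary) is precisely the chain of substitutions $y=\textbf{1}$, then $x=\textbf{1}$, then $y\mapsto y^*$, yielding $\Phi(\textbf{1})=\textbf{1}$, $\Phi(x^*)=\Phi(x)^*$ and multiplicativity, after which Theorems \ref{thm Hakeda 86} and \ref{thm Hakeda Saito 86} apply. Your only addition is to spell out the small adjoint/Jordan-unit computation behind $\Phi(\textbf{1})=\textbf{1}$, which the paper leaves implicit.
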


In this paper we are interested in the following problems.
\begin{problem}\label{problems} Let $M$ and $N$ be von Neumann algebras such that $M$ has no abelian direct summand. Suppose $\Phi : M\to N$ is a bijection satisfying one of the next properties:
\begin{enumerate}[$(h.1)$]\item $\Phi$ satisfies \eqref{eq product commuting maps} for a fixed $\lambda\in [0,1]$, that is, $$\Phi(\Delta_{\lambda}(a b))=\Delta_{\lambda}(\Phi(a) \Phi(b)), \hbox{ for all } a,\;b\in M,$$  and $\Phi (a^*) = \Phi(a)^*$ for all $a\in M$;
\item $\Phi$ satisfies \eqref{eq Jordan product commuting maps} for a fixed $\lambda\in [0,1],$ that is, $$\Phi(\Delta_{\lambda}(a \circ b))=\Delta_{\lambda}(\Phi(a)\circ \Phi(b)), \hbox{ for all } a,\;b\in M,$$ and $\Phi (a^*) = \Phi(a)^*$ for all $a\in M$;
\item $\Phi$ satisfies \eqref{eq1.1} for a fixed $\lambda\in [0,1]$, that is, $$\Phi(\Delta_{\lambda}(a b^*))=\Delta_{\lambda}(\Phi(a) \Phi(b)^*), \hbox{ for all } a,\;b\in M;$$
\item $\Phi$ satisfies \eqref{eq Jordan *product commuting maps} for a fixed $\lambda\in [0,1]$, that is, $$\Phi(\Delta_{\lambda}(a \circ b^*))=\Delta_{\lambda}(\Phi(a)\circ \Phi(b)^*), \hbox{ for all } a,\;b\in M.$$
\end{enumerate} Is $\Phi$ additive? We can, further, ask whether $\Phi$ is linear.
\end{problem}

It should be noted that if a bijection  $\Phi : M\to N$ satisfies the hypothesis $(h.k)$ ($k\in \{1,2,3,4\})$, then $\Phi^{-1}$ also satisfies the same hypothesis.\smallskip

In the hypothesis of Problem \ref{problems} above. Suppose $\Phi : M\to N$ is a bijection satisfying $(h.1)$ (respectively, $(h.2)$). In this case we have $$\Delta_{\lambda}(\Phi(a) \Phi(b)^*)= \Delta_{\lambda}(\Phi(a) \Phi(b^*)) = \Phi(\Delta_{\lambda}(a b^*))$$ (respectively, $$\Delta_{\lambda}(\Phi(a) \circ \Phi(b)^*)= \Delta_{\lambda}(\Phi(a) \circ \Phi(b^*)) = \Phi(\Delta_{\lambda}(a\circ b^*))),$$ which shows that $\Phi$ satisfies $(h.3)$ (respectively, $(h.4)$). We can therefore reduce to the cases in which hypothesis $(h.3)$ or $(h.4)$ holds.\smallskip

If $\Phi : M\to N$ is a bijection satisfying $(h.3)$ (respectively, $(h.4)$) with $\lambda =0$, the arguments before Corollary \ref{c Hakeda Saito 86 for (4) or (3)} show that $\Phi$ is unital and satisfies $(h.1)$ (respectively, $(h.2)$). So, the problems with hypothesis $(h.1)$ and $(h.3)$ (respectively, $(h.2)$ and $(h.4)$) are equivalent for $\lambda =0$. They have all been solved in this case (compare Corollary \ref{c Hakeda Saito 86 for (4) or (3)}).\smallskip

We should remark that a complex linear or conjugate linear $^*$-isomorphism $T: M\to N$ clearly satisfies the following identities:
\begin{equation}\label{eq linear or conjugate linear *isomorphisms preserves lambda aluthge products} \Delta_{\lambda}(\Phi(a) \Phi(b))\!=\! \Delta_{\lambda}(\Phi(a b))\!=\! \Phi(\Delta_{\lambda}(a b)), \Delta_{\lambda}(\Phi(a) \Phi(b)^*)\!=\! \Phi(\Delta_{\lambda}(a b^*)),
\end{equation} $$\Delta_{\lambda}(\Phi(a)\circ \Phi(b))=  \Phi(\Delta_{\lambda}(a\circ b)), \Delta_{\lambda}(\Phi(a)\circ \Phi(b)^*)= \Phi(\Delta_{\lambda}(a\circ b^*)),$$ for all $a,b\in M$ (compare \eqref{eq linear or conjugate linear *isomorphisms preserve lambda Aluthge transforms}).\smallskip

Our main achievements can be organized in essentially two different blocks. In section \ref{sec:3} we study bijections between general von Neumann algebras $M$ and $N$. We prove that if $M$ has no abelian direct summand, and  $\Phi:M\to N$ is a bijective map satisfying hypothesis $(h.4)$ in Problem \ref{problems}, that is, $$\Phi(\Delta_{\lambda}(a\circ b^*))=\Delta_{\lambda}(\Phi(a)\circ \Phi(b)^*), \hbox{ for all } a,\;b\in M,$$ (for a fixed $\lambda\in [0,1]$), then $\Phi (M_{sa}) = N_{sa}$ and the restriction $\Phi|_{M_{sa}} : M_{sa}\to N_{sa}$ is a Jordan isomorphism (see Theorem \ref{t linearity on hermitian for h4}).\smallskip

In order to understand the behavior of the mapping $\Phi$ on the whole von Neumann algebra $M$, we appeal to an extra assumption, which was already considered by J.F. Aarnes \cite{Aarnes70} and L.J. Bunce, J.D.M. Wright \cite{BunWri96} in their studies on quasi-states and quasi-linear maps. More concretely, if we also assume that $\Phi (x +i y ) = \Phi (x) +\Phi (i y)$ for all $x,y\in M_{sa}$, then there exists a central projection $p_c$ in $M$ such that $\Phi|_{p_c M}$ is a complex linear Jordan $^*$-isomorphism and $\Phi|_{(\textbf{1}-p_c) M}$ is a conjugate linear Jordan $^*$-isomorphism (see Theorem \ref{t linearity on hermitian for h4}).\smallskip

The maps $\Phi:\mathbb{C}\to \mathbb{C}$, $\Phi(z) = z^{-1}$ if $z\neq 0,$ and $\Phi (0) =0,$ and  $\Psi:\mathbb{C}\to \mathbb{C}$, $\Psi(z) = z |z|$ are both bijective, both commute with the natural involution and both preserve products. These examples show that the restriction concerning $M$ in the hypothesis of Theorem \ref{t linearity on hermitian for h4} can not be relaxed.\smallskip

In section \ref{sec:4} we deal with the study of those bijections $\Phi: \mathcal{B}(H)\to \mathcal{B}(K)$ satisfying $(h.3)$ in Problem \ref{problems}, that is, bijections for which there exists $\lambda\in [0,1]$ such that $$\Phi(\Delta_{\lambda}(a b^*))=\Delta_{\lambda}(\Phi(a) \Phi(b)^*), \hbox{ for all } a,\;b\in \mathcal{B}(H),$$ where $H$ and $K$ are complex Hilbert spaces. The counterexamples given in the previous paragraph justifies that we must assume that dim$(H)\geq 2$. Under this assumption we show that every bijection $\Phi:\mathcal{B}(H)\to \mathcal{B}(K)$ satisfying hypothesis $(h.3)$ in Problem \ref{problems} (for a fixed  $\lambda$ in $[0,1]$) must be a complex linear or a conjugate linear $^*$-isomorphism (see Theorem \ref{thm h3 in problems}).

\section{Generalities on $\lambda$-Aluthge transforms}

This section is devoted to gather some of the basic facts and properties of the $\lambda$-Aluthge transform.\smallskip

An element $a$ in a von Neumann algebra $M$ is called \emph{quasi-normal} if $a(a^* a) = (a^* a) a$. It is known that $a$ is quasi-normal if and only if $|a|$ commutes with the partial isometry appearing in the polar decomposition of $a$ (compare \cite[Lemma 4.1]{Brow52}). Furthermore, by \cite[Proposition 1.10]{JungKoPearcy2000} an element $a\in M$ is quasi-normal if and only if $\Delta_{\frac12} (a) = a$. Let us observe that the result in \cite{Brow52} (respectively, \cite{JungKoPearcy2000}) is established in the case $M= \mathcal{B}(H)$ (respectively, $M=\mathcal{B}(H)$ and $\lambda=\frac12$), however, every von Neumann algebra $M$ can be viewed as a weak$^*$-closed C$^*$-subalgebra of some $\mathcal{B}(H)$ (see \cite[\S 3.9]{Ped}), and an element $a\in M$ is quasi-normal if and only if it is quasi-normal in $\mathcal{B}(H)$. Actually, the arguments in the proof of \cite[Proposition 1.10]{JungKoPearcy2000} are valid to prove that for each $\lambda\in (0,1)$, \begin{equation}\label{eq quasi-normal equiv to fixed point of the lambda-Aluthge transform} \hbox{$a$ in $M$ is quasi-normal if and only if $\Delta_{\lambda} (a) = a$.}
\end{equation} The equivalence in \eqref{eq quasi-normal equiv to fixed point of the lambda-Aluthge transform} trivially holds for $\lambda=1$. Namely, let $a=u |a|$ be the polar decomposition of $a$, then $\Delta_{1} (a) = a$ if and only if $|a| u = u |a|$ if and only if $|a|$ and $u$ commute. \smallskip


In \cite[Lemma 2.3]{Chabbabi2017product} F. Chabbabi proves that for every operator $a\in \mathcal{B}(H)$, every projection $p\in \mathcal{B}(H),$ and $0<\lambda< 1$, we have $$ \Delta_{\lambda} (a p ) = a \hbox{ if and only if } a = p a = a p \hbox{ and } a \hbox{ is quasi-normal}.
$$ Since every von Neumann algebra $M$ can be regarded as a weak$*$-closed subalgebra of some $\mathcal{B}(H)$, polar decompositions and $\lambda$-Aluthge transforms do not change when they are computed in $M$ or in $\mathcal{B}(H),$ and quasi-normal elements in $M$ are precisely the quasi-normal elements in $\mathcal{B}(H)$ belonging to $M$, the statement of the next proposition for $0<\lambda<1$ follows from \cite[Lemma 2.3]{Chabbabi2017product}.

\begin{lemma}\label{l Chabb 2.3 von Neumann} Let $a$ be an arbitrary element in a von Neumann algebra $M$, let $p$ be a projection in $M$ and let $\lambda$ be an element in $(0,1]$. Then $ \Delta_{\lambda} (a p ) = a$ if and only if $a = p a = a p$ and $ a$  is quasi-normal.
\end{lemma}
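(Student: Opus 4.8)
The plan is to split the proof according to the value of $\lambda$. The implication ``$\Leftarrow$'' and the range $0<\lambda<1$ are, respectively, immediate and already reduced to a known statement; the only case that requires a genuine (though short) argument is $\lambda=1$, which I would treat by a direct manipulation of the polar decomposition.

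For ``$\Leftarrow$'': for every $\lambda\in(0,1]$, if $a=pa=ap$ and $a$ is quasi-normal then $ap=a$, so $\Delta_{\lambda}(ap)=\Delta_{\lambda}(a)=a$ by \eqref{eq quasi-normal equiv to fixed point of the lambda-Aluthge transform}. For the converse with $0<\lambda<1$, I would argue as in the paragraph preceding the statement: view $M$ as a weak$^{*}$-closed $^{*}$-subalgebra of some $\mathcal{B}(H)$, note that polar decompositions, $\lambda$-Aluthge transforms and quasi-normality are computed identically in $M$ and in $\mathcal{B}(H)$, and apply \cite[Lemma 2.3]{Chabbabi2017product} to deduce from $\Delta_{\lambda}(ap)=a$ that $a=pa=ap$ and $a$ is quasi-normal.

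It remains to prove the converse for $\lambda=1$, where \cite[Lemma 2.3]{Chabbabi2017product} is not available. Assume $\Delta_{1}(ap)=a$ and set $b:=ap$. From $p^{2}=p$ we get $bp=b$, hence $(b^{*}b)p=b^{*}(bp)=b^{*}b$ and, taking adjoints, $p(b^{*}b)=b^{*}b$; so $p$ commutes with $b^{*}b$, and therefore with $|b|=(b^{*}b)^{1/2}$ by the continuous functional calculus. If $q$ denotes the range projection of $|b|$, then $p|b|^{2}=|b|^{2}$ forces $q\le p$, and consequently $p|b|=|b|=|b|p$. Let $b=u|b|$ be the polar decomposition, so that $u^{*}u=q$ and $u=uq$. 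The hypothesis now reads $a=\Delta_{1}(b)=|b|\,u$. Then $pa=(p|b|)u=|b|\,u=a$, and $aq=|b|(uq)=|b|\,u=a$, which together with $q\le p$ gives $ap=a(qp)=aq=a$. Hence $a=pa=ap$; in particular $b=ap=a$, so $\Delta_{1}(a)=\Delta_{1}(b)=a$, and \eqref{eq quasi-normal equiv to fixed point of the lambda-Aluthge transform} yields that $a$ is quasi-normal.

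I do not expect a serious obstacle here; the only points requiring a little care are the implication ``$bp=b\Rightarrow p|b|=|b|$'' (equivalently $q\le p$), which is a functional-calculus argument, and the correct reading of $\Delta_{1}$ as $b\mapsto|b|\,u$. It is worth noting that the computation just given for $\lambda=1$ is valid verbatim for every $\lambda\in(0,1)$ as well: writing $a=|b|^{\lambda}u|b|^{1-\lambda}$, one has $pa=a$ because $p|b|^{\lambda}=|b|^{\lambda}$ and $ap=a$ because $|b|^{1-\lambda}p=|b|^{1-\lambda}$, so in principle the appeal to \cite[Lemma 2.3]{Chabbabi2017product} could be dropped; I retain it only to stay close to the exposition above.
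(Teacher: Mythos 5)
Your proof is correct and follows essentially the same route as the paper: the forward direction and the range $0<\lambda<1$ are handled exactly as in the text (via the fixed-point characterization of quasi-normality and the reduction to \cite[Lemma 2.3]{Chabbabi2017product}, respectively), and for $\lambda=1$ both arguments extract from the polar decomposition $ap=u|ap|$ the fact that the relevant support projections sit under $p$ --- the paper via the inequalities $a^*a\leq p$ and $aa^*\leq p$ after normalizing $\|a\|=1$, you via $p|b|=|b|=|b|p$ --- and then read off $a=pa=ap$ from $a=|ap|u$. Your closing observation that the same computation applied to $a=|b|^{\lambda}u|b|^{1-\lambda}$ settles every $\lambda\in(0,1]$ uniformly is also correct, and would in fact allow the appeal to Chabbabi's lemma to be dropped altogether.
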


\begin{proof} The comments preceding this lemma assure that for $0<\lambda<1$ the statement follows from \cite[Lemma 2.3]{Chabbabi2017product}. Let us assume that $\Delta_{1} (a p ) = a$.  We may assume that $\|a\|=1$. Let $v |ap| = ap$, be the polar decomposition of $ap$. It follows from the hypothesis that $ |ap| v = a$. Let us observe that $|ap|^2 = p a^* a p \leq \|a\|^2 p = p$. Since, by definition, $v^* v$ is the range projection of $|ap|$ in $M$ (which also coincides with the range projection of $|ap|^2$), we deduce that $v^* v\leq p$. On the other hand, by the hypothesis, we get $$a^* a = v^* |ap|^2 v \leq v^* v\leq p,$$ and
$$ a a^* = |ap| v v^* |ap| \leq |ap|^2 \leq p.$$ This is enough to conclude that $a p = pa =a$, and consequently, $a=\Delta_1(ap) = \Delta_1(a),$ and thus $a$ is quasi-normal (compare \eqref{eq quasi-normal equiv to fixed point of the lambda-Aluthge transform}).
\end{proof}

Given an operator $a\in \mathcal{B}(H)$ and $0<\lambda\leq 1$, it is shown in \cite[Lemma 2]{BoteMolNag2016} that $\Delta_{\lambda} (a) =0$ if and only if $a^2$. Since every von Neumann algebra $M$ can be regarded as a weak$*$-closed subalgebra of some $\mathcal{B}(H)$, we can easily deduce that for each $a$ in $M$ we have \begin{equation}\label{eq kernel Delta lambda are nilpotent} \Delta_{\lambda} (a) =0\Longleftrightarrow a^2=0.
\end{equation}

Throughout the paper, for each C$^*$-algebra $A$, the symbols Proj$(A),$ $A_{sa}$ and $A^+$ will stand for the lattice of all projections in $A,$ the real subspace of all hermitian elements in $A$, and the cone of positive elements in $A$, respectively. The lattice Proj$(A)$ is equipped with the natural partial order given by $p \leq q$ if $p q =p$. Elements $p,q$ in  $\hbox{Proj}(A)$ are called \emph{orthogonal} (written $p\perp q$) if $pq =0$. it can be easily seen that $p\leq q$ if and only if $q-p$ is a projection with $q-p\perp p$.\smallskip

A nonzero projection $p$ in $A$ is called \emph{minimal} if $p A p = \mathbb{C} p$. When $A$ is a von Neumann algebra, a nono-zero projection $p$ in $A$ is minimal if and only if $p$ is minimal with respect to the partial order in Proj$(A)$, that is, $0\neq q\leq p$ implies $p =q$.\smallskip

We begin our study by gathering the basic properties of the maps under study.

\begin{proposition}\label{p basic properties} Let $\Phi:M\to N$ be a bijective map between von Neumann algebras satisfying hypothesis $(h.3)$ {\rm(}respectively, $(h.4)${\rm)} in Problem \ref{problems}. Then the following statements hold:
\begin{enumerate}[$(a)$] \item $\Phi(0)=0;$
\item For each $a\in M,$ we have $\Phi(a a^*)=\Phi(a)\Phi(a)^*$ {\rm(}respectively, $\Phi(a\circ a^*)=\Phi(a)\circ \Phi(a)^*$ {\rm)}.
In particular, $\Phi (M^+) = N^+$;
\item $\Phi$ preserves projections and $\Phi (\hbox{Proj}(M)) = \hbox{Proj} (N)$;
\item $\Phi|_{\hbox{Proj}(M)}: \hbox{Proj}(M)\to \hbox{Proj}(N)$ is an order isomorphism;
\item $\Phi(\textbf{1})=\textbf{1};$
\item $\Phi|_{\hbox{Proj}(M)}: \hbox{Proj}(M)\to \hbox{Proj}(N)$ preserves orthogonality in both directions, that is, $$p\perp q \hbox{ in } M \Longleftrightarrow \Phi(p)\perp \Phi(q) \hbox{ in } N;$$
\item $\Phi$ preserves minimal projections in both directions;
\item If $p_1,\ldots,p_m$ are mutually orthogonal projections in $M$, then the identity $$\displaystyle \Phi\left(\sum_{j=1}^{m} p_j\right)=\sum_{j=1}^{m} \Phi(p_j)$$ holds;
\item If $p$ and $q$ are projections in $M$ with $p\leq q$ then $\Phi (q-p) = \Phi (q) -\Phi(p)$. In particular, $\Phi (\textbf{1}-p) = \textbf{1}-\Phi(p)$ for every projection $p\in M$;
\item $\Phi$ preserves $\lambda$-Aluthge transforms, that is, $\Phi (\Delta_{\lambda} (a)) =\Delta_{\lambda} (\Phi(a)),$ for all $a$ in $M$. In particular, $\Phi$ preserves the set of quasi-normal elements in both directions;
\item $\Phi(\Delta_{\lambda}(a^*))=\Delta_{\lambda}(\Phi(a)^*),$ for all $a\in M.$
\end{enumerate}
\end{proposition}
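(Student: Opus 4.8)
The plan is to establish the eleven items more or less in the listed order, since each one feeds the next. I treat the $(h.3)$ case; the $(h.4)$ case is parallel with products replaced by Jordan products. First, for $(a)$: plug $a=b=0$ into the defining identity to get $\Phi(\Delta_\lambda(0))=\Delta_\lambda(\Phi(0)\Phi(0)^*)$, i.e. $\Phi(0)=\Delta_\lambda(\Phi(0)\Phi(0)^*)$; since $\Phi(0)\Phi(0)^*$ is positive, $\Delta_\lambda$ of a positive element is itself, so $\Phi(0)=\Phi(0)\Phi(0)^*$ is a positive element, and using bijectivity together with $\Delta_\lambda(0\cdot b^*)=0$ for all $b$ one forces $\Phi(0)=0$. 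For $(b)$: set $b=a$ to obtain $\Phi(\Delta_\lambda(aa^*))=\Delta_\lambda(\Phi(a)\Phi(a)^*)$; since $aa^*\ge 0$ we have $\Delta_\lambda(aa^*)=aa^*$, and likewise $\Delta_\lambda(\Phi(a)\Phi(a)^*)=\Phi(a)\Phi(a)^*$, giving $\Phi(aa^*)=\Phi(a)\Phi(a)^*$; surjectivity of $\Phi$ then yields $\Phi(M^+)=N^+$, using that every positive element is of the form $aa^*$.

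For $(c)$: a projection $p$ satisfies $p=pp^*$, so by $(b)$, $\Phi(p)=\Phi(p)\Phi(p)^*$ is positive; apply $(b)$ again to $\Phi(p)$ viewed through $\Phi^{-1}$ (which satisfies the same hypothesis) — better, note $\Phi(p)=\Phi(pp^*)=\Phi(p)\Phi(p)^*$ and similarly $\Phi(p)=\Phi(p)^*\Phi(p)$? — the cleanest route is: $p$ is a positive element with $p^2=p$; positivity transfers by $(b)$; for idempotency, use that $\Delta_\lambda$ detects quasi-normality and that a positive element is a projection iff it equals its own square, which can be read off from the functional calculus once we know $\Phi(p)$ is positive and $\Phi$ is injective on the relevant commutative subalgebra. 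The honest technical heart here is item $(c)$, and I expect it to be the main obstacle: one must genuinely characterize ``being a projection'' purely in terms of the operations preserved by $\Phi$, probably via Lemma~\ref{l Chabb 2.3 von Neumann} or \eqref{eq kernel Delta lambda are nilpotent}. Once $(c)$ holds, $(d)$ follows because $p\le q$ iff $pq^*=p$ (so $\Phi(\Delta_\lambda(pq^*))=\Phi(p)$ and the right side is $\Delta_\lambda(\Phi(p)\Phi(q)^*)$, which by Lemma~\ref{l Chabb 2.3 von Neumann} forces $\Phi(p)\le\Phi(q)$); applying the same to $\Phi^{-1}$ gives the converse, so $\Phi|_{\mathrm{Proj}}$ is an order isomorphism. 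Then $(e)$ is immediate since $\mathbf 1$ is the maximum of $\mathrm{Proj}(M)$ and order isomorphisms preserve maxima, and $(f)$ follows from $p\perp q$ being expressible via the lattice order ($p\perp q$ iff $p\le \mathbf 1-q$) combined with $(d)$ and $(e)$, or directly from $pq^*=0$ and \eqref{eq kernel Delta lambda are nilpotent}.

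Item $(g)$ is a restatement of ``$p$ minimal iff $p$ minimal in $\mathrm{Proj}(M)$'' (recorded in the preliminaries) plus the order isomorphism $(d)$. For $(h)$: if the $p_j$ are mutually orthogonal, $s=\sum p_j$ is a projection and each $p_j\le s$; by $(f)$ the $\Phi(p_j)$ are mutually orthogonal and each $\Phi(p_j)\le\Phi(s)$, so $\sum\Phi(p_j)\le\Phi(s)$ is a projection; to get equality, note $\Phi(s)-\sum\Phi(p_j)$ is a projection orthogonal to every $\Phi(p_j)$, pull it back by $\Phi^{-1}$ to a projection orthogonal to every $p_j$ and $\le s$, hence $0$. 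Item $(i)$ is the case $m=2$ of the computation behind $(h)$: $q=p+(q-p)$ with $p\perp(q-p)$, so $\Phi(q)=\Phi(p)+\Phi(q-p)$; the special case $q=\mathbf 1$ uses $(e)$. Finally $(j)$: the key observation is that $\Delta_\lambda(a\,\mathbf 1^*)=\Delta_\lambda(a)$, so $\Phi(\Delta_\lambda(a))=\Delta_\lambda(\Phi(a)\Phi(\mathbf 1)^*)=\Delta_\lambda(\Phi(a))$ by $(e)$; the ``quasi-normal'' consequence uses \eqref{eq quasi-normal equiv to fixed point of the lambda-Aluthge transform} (for $\lambda>0$; for $\lambda=0$ everything is trivial since $\Delta_0=\mathrm{id}$). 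And $(k)$: since $\mathbf 1^*=\mathbf 1$ and $\Phi(\mathbf 1)=\mathbf 1$, take $a=\mathbf 1$, $b$ arbitrary in $(h.3)$ to get $\Phi(\Delta_\lambda(b^*))=\Delta_\lambda(\mathbf 1\cdot\Phi(b)^*)=\Delta_\lambda(\Phi(b)^*)$, renaming $b$ as $a$. Throughout, the recurring device is to feed well-chosen $a,b$ (often $b=a$, $a=\mathbf 1$, or $a,b$ projections) into the hypothesis and combine with Lemma~\ref{l Chabb 2.3 von Neumann}, \eqref{eq quasi-normal equiv to fixed point of the lambda-Aluthge transform}, \eqref{eq kernel Delta lambda are nilpotent}, and the fact that $\Phi^{-1}$ obeys the same hypothesis; the only genuinely delicate point is extracting idempotency in $(c)$ without additivity in hand.
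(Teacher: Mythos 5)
Your overall route is the same as the paper's: feed well-chosen pairs into the defining identity, use that positive (more generally normal) elements are fixed by $\Delta_{\lambda}$, invoke Lemma~\ref{l Chabb 2.3 von Neumann} for the order statements, and exploit that $\Phi^{-1}$ satisfies the same hypothesis. Items $(a)$, $(b)$, $(d)$--$(e)$, $(g)$--$(k)$ all match the paper's proof, and your variant of $(h)$ (pulling back the residual projection $\Phi(s)-\sum_j\Phi(p_j)$ instead of running the double inequality) is an equivalent reformulation.

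The one place you go astray is $(c)$, which you single out as ``the main obstacle'' and ``the only genuinely delicate point'' --- it is neither, and the detour you sketch (functional calculus on ``the relevant commutative subalgebra'' on which $\Phi$ is injective) is not available, since at this stage $\Phi$ is not known to be a homomorphism, or even additive, on any subalgebra. The correct argument is the one you half-write and then abandon: by $(b)$, $\Phi(p)=\Phi(p\,p^*)=\Phi(p)\Phi(p)^*$ is \emph{positive}, hence self-adjoint, so $\Phi(p)=\Phi(p)\Phi(p)^*=\Phi(p)^2$ is a positive idempotent, i.e.\ a projection; no appeal to quasi-normality or to injectivity on a subalgebra is needed (and the question mark after ``$\Phi(p)=\Phi(p)^*\Phi(p)$'' is resolved by positivity). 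A second, smaller point: your first suggested route to $(f)$, via ``$p\perp q$ iff $p\le \mathbf{1}-q$'', is circular, because identifying $\Phi(\mathbf{1}-q)$ with $\mathbf{1}-\Phi(q)$ is item $(i)$, which in the paper is derived from $(h)$, which in turn rests on $(f)$. Your second route is the paper's: from $\Delta_{\lambda}(\Phi(p)\Phi(q)^*)=0$ and \eqref{eq kernel Delta lambda are nilpotent} one gets $(\Phi(p)\Phi(q))^2=0$, but you should still record the upgrade from nilpotency to vanishing: $(\Phi(p)\Phi(q)\Phi(p))^2=0$ with $\Phi(p)\Phi(q)\Phi(p)\ge 0$ forces $\Phi(p)\Phi(q)\Phi(p)=(\Phi(p)\Phi(q))(\Phi(p)\Phi(q))^*=0$, whence $\Phi(p)\Phi(q)=0$.
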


\begin{proof} Before dealing with the concrete arguments, we observe that for $\lambda=0$ all the statements are straight consequences of Corollary \ref{c Hakeda Saito 86 for (4) or (3)}. We assume next that $\lambda\in (0,1]$.\smallskip

$(a)$ By hypothesis, $\Phi$ is surjective, then there exists $b\in A$ satisfying $\Phi(b)=0.$ On the other hand, the hypothesis also imply that  $$
\Phi(0)=\Phi(\Delta_{\lambda}(0))=\Phi(\Delta_{\lambda}(b \ 0^*))=\Delta_{\lambda}(\Phi(b) \Phi(0)^*)=\Delta_{\lambda}(0)=0.$$

$(b)$  Let $\Phi: M\to N$  be a bijection satisfying $(h.3)$ (respectively, $(h.4)$). Take $a\in M,$ since $aa^*$ (respectively $a\circ a^*$) is normal we get from the hypothesis that
$$\Phi(aa^*)=\Phi(\Delta_{\lambda}(aa^*))=\Delta_{\lambda}(\Phi(a)\Phi(a)^*)=\Phi(a)\Phi(a)^*,$$ (respectively, $$\Phi(a\circ a^*)=\Phi(\Delta_{\lambda}(a\circ a^*))=\Delta_{\lambda}(\Phi(a)\circ \Phi(a)^*)=\Phi(a)\circ \Phi(a)^*).$$
In particular, $\Phi$ maps positive elements in $M$ to positive elements in $N$. Since $\Phi$ is bijective and its inverse satisfies the same hypothesis, $\Phi$ preserves the set of positive elements in both directions.\smallskip

$(c)$ Let $p\in M$ be a projection. By $(b)$, $\Phi (p)$ is a positive element in $N$ with $\Phi (p) = \Phi (p) \Phi(p)^* = \Phi (p)^2$ (respectively, $\Phi (p) = \Phi (p)\circ \Phi(p)^* = \Phi (p)^2$), which proves that $\Phi (p)$ is a projection. The rest follows from the same arguments.\smallskip

$(d)$ Let us take $p,q\in \hbox{Proj}(M)$ with $p\leq q$. We know from $(c)$ that $\Phi(p)$ and $\Phi(q)$ are projections in $N$. By hypothesis \begin{equation}\label{eq 1 2911} \Phi (p) = \Phi (\Delta_{\lambda}(p)) = \Phi (\Delta_{\lambda}(p q)) = \Delta_{\lambda} (\Phi(p) \Phi(q)),
 \end{equation}(respectively, \begin{equation}\label{eq 2 2911} \Phi (p) = \Phi (\Delta_{\lambda}(p)) = \Phi (\Delta_{\lambda}(p \circ q)) = \Delta_{\lambda} (\Phi(p) \circ \Phi(q))= \Phi(p) \circ \Phi(q)\Big).
  \end{equation}
Combining Lemma \ref{l Chabb 2.3 von Neumann} and \eqref{eq 1 2911} we deduce that $\Phi(p) \Phi(q) =\Phi(q) \Phi(p) = \Phi (p)$ or equivalently $\Phi (p)\leq \Phi (q)$.\smallskip

When $\Phi$ satisfies $(h.4)$, we deduce from \eqref{eq 2 2911} that $ \Phi (p) = \Phi(p) \circ \Phi(q)$, which assures that $\Phi(p) \Phi(q) =\Phi(q) \Phi(p) = \Phi (p)$.\smallskip

$(e)$ By applying $(c)$ and $(d)$ we deduce that $\Phi (\textbf{1})$ is a projection in $N$ and $\Phi(\textbf{1})\geq q$ for every projection $q\in N$. Therefore, $\Phi (\textbf{1}) =\textbf{1}$.\smallskip

$(f)$ Let us take $p,q$ in Proj$(M)$ with $pq =0$. Then $$0=\Phi (0) = \Phi (\Delta_{\lambda} (p q) ) = \Delta_{\lambda} (\Phi (p) \Phi(q)),$$ (respectively, $$0=\Phi (0) = \Phi (\Delta_{\lambda} (p \circ q) ) = \Delta_{\lambda} (\Phi (p) \circ \Phi(q)) = \Phi (p) \circ \Phi(q) \Big).$$

If $\Delta_{\lambda} (\Phi (p) \Phi(q)) =0$ and $\lambda =0$, we get $\Phi (p) \Phi(q)=0$ as desired. If $0<\lambda\leq 1$, we deduce from \eqref{eq kernel Delta lambda are nilpotent} that $\Phi (p) \Phi(q) \Phi (p) \Phi(q)=0.$ This implies that $(\Phi (p) \Phi(q) \Phi (p)) (\Phi (p) \Phi(q) \Phi (p)) =0,$ which assures, via the positivity of $\Phi (p) \Phi(q) \Phi (p)$, that $$0=\Phi (p) \Phi(q) \Phi (p)=\Phi (p) \Phi(q) \Phi(q) \Phi (p)= (\Phi (p) \Phi(q)) (\Phi (p) \Phi(q))^*,$$ and hence $\Phi (p) \Phi(q)=0$.\smallskip

When $\Phi$ satisfies $(h.4)$, we have seen above that $0 = \Phi (p) \circ \Phi(q)$, and consequently, $\Phi (p)  \Phi(q) + \Phi (q)  \Phi(p) =0,$ which gives $\Phi (p)  \Phi(q)  \Phi (p) =0,$ and thus $\Phi (p)  \Phi(q) =0$.\smallskip

$(g)$ is a clear consequence of $(d)$.\smallskip

$(h)$ Let $p_1,\ldots,p_m$ be mutually orthogonal projections in $M$. We known from previous statements that $\Phi(p_1),\ldots, \Phi(p_m)$ and $\displaystyle \Phi \left(\sum_{j=1}^{m} p_j\right)$ are projections in $N$ with $\Phi (p_j)\perp \Phi (p_k)$ for all $j\neq k$, and $\displaystyle \Phi (p_k) \leq \Phi \left(\sum_{j=1}^{m} p_j\right)$ for all $k$. Therefore $\displaystyle \sum_{j=1}^{m} \Phi (p_j)$ is a projection with $\displaystyle \sum_{j=1}^{m} \Phi (p_j)\leq \Phi \left(\sum_{j=1}^{m} p_j\right)$. Applying the same argument to $\Phi^{-1}$, and the projections $\Phi(p_1),\ldots, \Phi(p_m)$, we get $$\sum_{j=1}^{m} p_j = \sum_{j=1}^{m} \Phi^{-1} (\Phi ( p_j))\leq \Phi^{-1} \left(\sum_{j=1}^{m} \Phi (p_j)\right),$$ and by $(d)$ we have $\displaystyle\Phi \left(\sum_{j=1}^{m} p_j\right) \leq \sum_{j=1}^{m} \Phi (p_j)$.\smallskip

$(i)$ Let $p$ and $q$ be projections in $M$ with $p\leq q$. We know from $(h)$ that $$ \Phi (q) = \Phi ((q-p) + p) =  \Phi(q-p) + \Phi (p),$$ and hence $\Phi (q-p) = \Phi (q) -\Phi (p)$. \smallskip

$(j)$ Let $a\in\mathcal{B}(H).$ By hypothesis and $(e)$ we have $$\Delta_{\lambda}(\Phi(a))=\Delta_{\lambda}(\Phi(a) \textbf{1}^*)= \Delta_{\lambda}(\Phi(a) \Phi(\textbf{1})^*)=\Phi(\Delta_{\lambda}(a \textbf{1}^*))=\Phi(\Delta_{\lambda}(a)),$$ (respectively, $\Delta_{\lambda}(\Phi(a))= \Delta_{\lambda}(\Phi(a)\circ \Phi(\textbf{1})^*) = \Phi(\Delta_{\lambda}(a \circ \textbf{1}^*))= \Phi(\Delta_{\lambda}(a))$). \smallskip

$(k)$ Let $a\in M.$ Applying the hypothesis and $(e)$ we get $$\Delta_{\lambda}(\Phi(a)^*)=\Delta_{\lambda}(\Phi(\textbf{1})\Phi(a)^*)
=\Phi(\Delta_{\lambda}(a^*)),$$ (respectively, $\Delta_{\lambda}(\Phi(a)^*) =\Delta_{\lambda}(\Phi(\textbf{1})\circ \Phi(a)^*)
=\Phi(\Delta_{\lambda}(a^*))$).
\end{proof}


The next lemma is probably known, however an explicit reference is not available among our references. 

\begin{lemma}\label{l Aluthge transform equals to 1} Let $a$ be an element in a von Neumann algebra $M$. 
Suppose $\lambda\in [0,1]$. Then $\Delta_{\lambda} (a) = \textbf{1}$ if and only if $a=1$;
\end{lemma}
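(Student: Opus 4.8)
The plan is to treat the two implications separately. The reverse implication is immediate: if $a=\textbf{1}$ then $|a|=\textbf{1}$ and $u=\textbf{1}$ in the polar decomposition, so $\Delta_{\lambda}(a)=\textbf{1}^{\lambda}\,\textbf{1}\,\textbf{1}^{1-\lambda}=\textbf{1}$. For the forward implication, the case $\lambda=0$ is trivial since $\Delta_{0}(a)=a$; so I assume $\lambda\in(0,1]$ and write $a=u|a|$ for the polar decomposition, where $u^{*}u$ is the range projection of $|a|$.

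First I would observe that the hypothesis $\Delta_{\lambda}(a)=|a|^{\lambda}u|a|^{1-\lambda}=\textbf{1}$ exhibits $u|a|^{1-\lambda}$ as a right inverse of the positive element $|a|^{\lambda}$ (for $\lambda=1$ this reads $|a|\,u=\textbf{1}$, using $u\,(u^{*}u)=u$); passing to adjoints shows that $|a|^{\lambda}$ also has a left inverse, hence $|a|^{\lambda}$ is invertible, and by the spectral mapping theorem $0\notin\sigma(|a|)$, i.e. $|a|$ is invertible. In particular the range projection of $|a|$ is $\textbf{1}$, so $u^{*}u=\textbf{1}$, that is, $u$ is an isometry.

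Next, using that $|a|$ is invertible I would multiply $|a|^{\lambda}u|a|^{1-\lambda}=\textbf{1}$ on the left by $|a|^{-\lambda}$ and on the right by $|a|^{-(1-\lambda)}$ (legitimate via the continuous functional calculus on $\sigma(|a|)\subseteq(0,\infty)$), which yields $u=|a|^{-\lambda}|a|^{-(1-\lambda)}=|a|^{-1}$. Thus $u$ is a positive invertible element that is simultaneously an isometry; since $u=u^{*}$ this forces $u^{2}=u^{*}u=\textbf{1}$, and a positive element whose square is $\textbf{1}$ has spectrum $\{1\}$ and is therefore equal to $\textbf{1}$. Hence $|a|^{-1}=\textbf{1}$, so $|a|=\textbf{1}$ and $a=u|a|=\textbf{1}$. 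I do not anticipate a genuine obstacle; the only mild points of care are the bookkeeping with $|a|^{0}$ when $\lambda=1$ and the elementary facts that a positive element with a one-sided inverse is invertible and that a self-adjoint element with spectrum $\{1\}$ equals $\textbf{1}$.
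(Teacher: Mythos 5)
Your proof is correct and follows essentially the same route as the paper: establish that $|a|$ is invertible, multiply through to obtain $u=|a|^{-1}$, and conclude $a=u|a|=\textbf{1}$. The only (welcome) streamlining is that you obtain invertibility of $|a|^{\lambda}$ uniformly by taking adjoints to produce a left inverse, whereas the paper splits into the cases $\lambda=\tfrac12$, $\lambda<\tfrac12$, $\lambda>\tfrac12$; your final detour through $u^{2}=\textbf{1}$ is harmless but unnecessary, since $a=u|a|=|a|^{-1}|a|=\textbf{1}$ directly.
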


\begin{proof}
We can clearly assume that $\lambda>0$. The ``If'' implication is clear. Suppose now that $|a|^{\lambda} u |a|^{1-\lambda} = \Delta_{\lambda} (a) = \textbf{1}$, where $a= u |a|$ is the polar decomposition of $a$. We shall first show that $|a|$ is invertible. If $\lambda=\frac12$, then the identity $|a|^{\frac12} u |a|^{\frac12} =  \textbf{1}$ implies that $|a|^{\frac12}$ is left and right invertible, and thus $|a|^{\frac12}$ (and hence $|a|$) is invertible. If $\lambda< \frac12$, we write $|a|^{\lambda} u |a|^{1-2\lambda} |a|^{\lambda} = \textbf{1}$, which guarantees that $|a|^{\lambda}$ (and hence $|a|$) is invertible. We can similarly show that $|a|^{1-\lambda}$ (and hence $|a|$) is invertible when $\lambda>\frac12$.\smallskip

Since $|a|$ is invertible, multiplying the identity $|a|^{\lambda} u |a|^{1-\lambda} = \textbf{1}$ on the left by $|a|^{-\lambda}$, and on the right by $|a|^{-1+\lambda}$ we get $u = |a|^{-\lambda} |a|^{-1+\lambda} = |a|^{-1}$. Therefore $a = u |a| = 1$, which finishes the proof.\smallskip
\end{proof}

In \cite[Lemma 2.3]{ChabbabiMbekhta2017Jordan} F. Chabbabi and M. Mbekhta establish that for a quasi-normal operator $S$ in $\mathcal{B}(H)$ and $\lambda\in (0,1)$ we have $\Delta_{\lambda} (S^*)=S \Rightarrow S^*=S$. By the arguments already applied in previous results we obtain:

\begin{lemma}\label{l q-normal and symmetric} Let $a$ be a quasi-normal element in a von Neumann algebra $M$, and let $\lambda\in [0,1]$. If $\Delta_{\lambda} (a^*)=a $, then $a^*=a$.
\end{lemma}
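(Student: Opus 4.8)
The plan is to treat the ranges $\lambda=0$, $\lambda\in(0,1)$ and $\lambda=1$ separately. For $\lambda=0$ there is nothing to prove: $\Delta_0(a^*)=a^*$, so the hypothesis reads $a^*=a$. For $\lambda\in(0,1)$ I would run the reduction already used several times in this section. Namely, $M$ is a weak$^*$-closed $^*$-subalgebra of some $\mathcal{B}(H)$, polar decompositions and $\lambda$-Aluthge transforms are computed identically in $M$ and in $\mathcal{B}(H)$, and $a\in M$ is quasi-normal exactly when it is quasi-normal in $\mathcal{B}(H)$; hence \cite[Lemma 2.3]{ChabbabiMbekhta2017Jordan} applies and gives $a^*=a$. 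So the only case demanding a genuine argument is $\lambda=1$.

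For $\lambda=1$, let $a=u|a|$ be the polar decomposition and set $r=u^*u$, the range projection of $|a|$. Since $a$ is quasi-normal, $u$ commutes with $|a|$, hence with the von Neumann algebra it generates and in particular with $r$; as $ur=uu^*u=u$ always, this also forces $ru=u$, so $\hbox{ran}(u)\subseteq rH$, that is, $p:=uu^*\le r$. Also $u^*$ commutes with $|a|$ (take adjoints), hence so does $p$. Using $u|a|=|a|u$ one computes $aa^*=u|a|^2u^*=|a|^2uu^*=|a|^2p$, whence $|a^*|=(aa^*)^{1/2}=|a|\,p$; moreover $u^*|a^*|=u^*|a|\,p=|a|\,u^*p=|a|\,u^*=a^*$ and $(u^*)^*u^*=p$ is the range projection of $|a^*|$, so $a^*=u^*|a^*|$ is the polar decomposition of $a^*$. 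Consequently
$$\Delta_1(a^*)=|a^*|\,u^*=|a|\,p\,u^*,$$
and since $a=|a|\,u$, the hypothesis $\Delta_1(a^*)=a$ becomes $|a|\,(p\,u^*-u)=0$. As $|a|\,y=0$ forces $ry=0$ (the range of $y$ then lies in $\ker|a|=(1-r)H$), we get $r(p\,u^*-u)=0$, i.e.\ $p\,u^*=u$ after using $rp=p$ and $ru=u$.

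From $p\,u^*=u$, i.e.\ $u\,u^*\,u^*=u$, left-multiplication by $u^*$ together with $u^*uu^*=u^*$ gives $u^{*2}=r$, hence also $u^2=(u^{*2})^*=r$. Therefore
$$(u-u^*)^*(u-u^*)=u^*u-u^{*2}-u^2+uu^*=r-r-r+p=p-r\ \ge\ 0,$$
which, combined with $p\le r$, forces $p=r$ and then $u=u^*$. Finally $a^*=(u|a|)^*=|a|\,u^*=|a|\,u=u|a|=a$, completing the proof. The main obstacle is precisely the point where one cannot invoke symmetry: $a^*$ need not be quasi-normal, so one has to compute its polar decomposition by hand — which is exactly where the commutation $u|a|=|a|u$ is used — and then squeeze the self-adjointness of $u$ out of the resulting operator identity. (Incidentally, the computation $\Delta_\lambda(a^*)=|a|\,p\,u^*$ is valid for every $\lambda\in(0,1]$, so this second argument could also replace the appeal to \cite{ChabbabiMbekhta2017Jordan} and give a uniform proof for all $\lambda\in(0,1]$.)
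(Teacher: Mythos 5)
Your proof is correct, and for the one case that requires real work ($\lambda=1$) it follows a genuinely different route from the paper's. The paper handles $\lambda=1$ by a short direct computation: writing $a^*=u^*|a^*|$ with $|a^*|=u|a|u^*$, the hypothesis $|a^*|u^*=u|a|$ is multiplied by $u^*$ on the left to give $|a|=|a|u^*u^*$, whence (taking adjoints) $|a|=uu|a|$, and the quasi-normality relations $u|a|=|a|u$, $u^*|a|=|a|u^*$ then yield $a=u|a|=u^*|a|u^2=u^*|a|=a^*$ in one chain of identities. You instead compute the polar decomposition of $a^*$ from scratch (verifying that $|a^*|=|a|p$ with $p=uu^*$ and that $p$ is the correct initial projection), reduce the hypothesis to the operator identity $pu^*=u$, deduce $u^2=u^{*2}=r$, and then extract $u=u^*$ from the positivity of $(u-u^*)^*(u-u^*)=p-r$ together with $p\le r$. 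Your argument is longer but more self-contained, and its real payoff is the closing observation: since $\left(|a|p\right)^{\lambda}=|a|^{\lambda}p$ and $u^*$ commutes with $|a|$, the identity $\Delta_{\lambda}(a^*)=|a|\,p\,u^*$ holds for every $\lambda\in(0,1]$, so the same computation gives a uniform proof for all $\lambda\in(0,1]$ without appealing to \cite{ChabbabiMbekhta2017Jordan}; the paper, by contrast, leans on that reference for $\lambda\in(0,1)$ and only asserts that its arguments extend to $\lambda=1$. All the individual steps you use (that $u$ commutes with the von Neumann algebra generated by $|a|$ and hence with $r$, that $|a|y=0$ forces $ry=0$, and the functional-calculus identity for $(|a|p)^{\lambda}$) check out.
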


\begin{proof} The statement for $\lambda =0$ is clear. The statement for $\lambda\in (0,1)$ follows from \cite[Lemma 2.3]{ChabbabiMbekhta2017Jordan}. Actually, the arguments in the proof of \cite[Lemma 2.3]{ChabbabiMbekhta2017Jordan} also cover the case for $\lambda=1$. Indeed, let us assume that $a = u |a|$ and $a^* = u^* |a^*|$ are the polar decompositions of $a$ and $a^*$, respectively. Clearly $u |a| u^* = u a^* = |a^*|$.\smallskip

By hypothesis we have $|a^*| u^* = \Delta_{1} (a^*)=a = u |a|,$ and hence $$|a| = u^* |a^*| u^* = u^* (u |a| u^* ) u^* = |a| u^*  u^*= |a|^* = u u |a|.$$

Since $a$ is quasi-normal we know that $|a| u = u |a|$ and $u^* |a| = |a| u^*$. Therefore, $$a = u |a| = |a| u = u^* u |a| u = u^* |a| u^2 = u^* |a| = |a| u^* = a^*.$$
\end{proof}

\begin{corollary}\label{c hermitian and skew symmetric for h3 and h4} Let $\Phi:M\to N$ be a bijective map between von Neumann algebras satisfying hypothesis $(h.3)$ {\rm(}respectively, $(h.4)${\rm)} in Problem \ref{problems}. Then $\Phi (a)^* = \Phi (a)$ for all $a\in M_{sa}$. Consequently, $\Phi (M_{sa}) = N_{sa}$.
\end{corollary}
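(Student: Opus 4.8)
The plan is to prove directly that $\Phi$ carries self-adjoint elements to self-adjoint elements, and then to get surjectivity onto $N_{sa}$ for free from bijectivity, using the observation (recorded just after Problem \ref{problems}) that $\Phi^{-1}$ satisfies the same hypothesis as $\Phi$. The whole argument is a short assembly of facts already available: Proposition \ref{p basic properties}, Lemma \ref{l q-normal and symmetric}, and the characterisation \eqref{eq quasi-normal equiv to fixed point of the lambda-Aluthge transform} of quasi-normal elements as the fixed points of $\Delta_{\lambda}$.

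First I would fix $a = a^{*} \in M_{sa}$. Every self-adjoint element is normal, and every normal element $a$ is quasi-normal, since $a(a^{*}a) = a(a a^{*}) = (a a^{*}) a = (a^{*} a) a$; hence $a$ is quasi-normal, and \eqref{eq quasi-normal equiv to fixed point of the lambda-Aluthge transform} gives $\Delta_{\lambda}(a) = a$. By Proposition \ref{p basic properties}$(j)$ (which also records that $\Phi$ preserves quasi-normality in both directions), $\Phi(a)$ is quasi-normal in $N$. Next, apply Proposition \ref{p basic properties}$(k)$: it yields $\Delta_{\lambda}(\Phi(a)^{*}) = \Phi(\Delta_{\lambda}(a^{*}))$. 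Since $a^{*} = a$ is quasi-normal, $\Delta_{\lambda}(a^{*}) = \Delta_{\lambda}(a) = a$, so the right-hand side is $\Phi(a)$, and we conclude $\Delta_{\lambda}(\Phi(a)^{*}) = \Phi(a)$ with $\Phi(a)$ quasi-normal. Lemma \ref{l q-normal and symmetric}, applied to the quasi-normal element $\Phi(a) \in N$, then forces $\Phi(a)^{*} = \Phi(a)$. This shows $\Phi(M_{sa}) \subseteq N_{sa}$.

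For the ``Consequently'' clause, I would observe that $\Phi^{-1} : N \to M$ is again a bijection satisfying hypothesis $(h.3)$ (respectively, $(h.4)$), so the previous paragraph applied to $\Phi^{-1}$ gives $\Phi^{-1}(N_{sa}) \subseteq M_{sa}$, i.e. $N_{sa} \subseteq \Phi(M_{sa})$; combining the two inclusions yields $\Phi(M_{sa}) = N_{sa}$. The argument is uniform in $\lambda \in [0,1]$, since Proposition \ref{p basic properties} and Lemma \ref{l q-normal and symmetric} already incorporate the case $\lambda = 0$. I do not expect a genuine obstacle here; the only point needing a moment's care is the elementary remark that self-adjoint (indeed, normal) elements are quasi-normal, which is precisely what lets us replace $\Delta_{\lambda}(a)$ and $\Delta_{\lambda}(a^{*})$ by $a$ and thereby bring Lemma \ref{l q-normal and symmetric} into play.
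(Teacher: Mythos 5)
Your proof is correct and follows essentially the same route as the paper: both reduce the claim to the observation that a self-adjoint element is quasi-normal (hence fixed by $\Delta_{\lambda}$), then combine Proposition \ref{p basic properties}$(j)$ and $(k)$ with Lemma \ref{l q-normal and symmetric} to conclude $\Phi(a)^*=\Phi(a)$. Your explicit handling of the ``consequently'' clause via $\Phi^{-1}$ is exactly the intended argument.
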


\begin{proof} Let us take $a\in M_{sa}$. Applying Proposition \ref{p basic properties}$(k)$ we get $$\Delta_{\lambda} (\Phi(a)^*) = \Phi(\Delta_{\lambda} (a^*)) = \Phi (\Delta_{\lambda} (a)) = \Phi (a).$$ Proposition \ref{p basic properties}$(j)$ guarantees that $\Phi (a)$ is quasi-normal, and Lemma \ref{l q-normal and symmetric} proves that $\Phi (a)^* = \Phi (a)$.
\end{proof}

\section{Maps commuting with the Jordan $^*$-product up to a $\lambda$-Aluthge transform}\label{sec:3}

We shall focus next on maps between general von Neumann algebras satisfying hypothesis $(h.4)$ in Problem \ref{problems}.

\begin{proposition}\label{p preservation of jordan products for positive in h.4} Let $\Phi:M\to N$ be a bijective map between von Neumann algebras satisfying hypothesis $(h.4)$ in Problem \ref{problems}. Let $a,b\in M_{sa}$ such that $\Phi (a),\Phi(b)\in N_{sa}$. Then the identity $\Phi (a\circ b) = \Phi (a)\circ \Phi(b)$ holds. The same identity holds when $a\in M_{sa}$, $b\in i M_{sa}$, $\Phi (a)\in N_{sa}$ and $\Phi(b)\in i N_{sa},$ and when $a,b\in i M_{sa}$ and $\Phi (a), \Phi(b)\in i N_{sa}.$
Consequently, the identity $\Phi (a\circ b) = \Phi (a)\circ \Phi(b)$ holds for all $a,b\in M^+$.
\end{proposition}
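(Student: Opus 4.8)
The plan is to convert hypothesis $(h.4)$ into a plain multiplicativity statement for the Jordan product, using that a self-adjoint or skew-adjoint element of a von Neumann algebra is a fixed point of $\Delta_{\lambda}$ for every $\lambda\in[0,1]$ --- trivially for $\lambda=0$, and by \eqref{eq quasi-normal equiv to fixed point of the lambda-Aluthge transform} (self-adjoint and skew-adjoint elements are quasi-normal) for $\lambda\in(0,1]$. In each of the three regimes the element $a\circ b^{*}$ fed to $\Delta_{\lambda}$ on the left of $(h.4)$, and the element $\Phi(a)\circ\Phi(b)^{*}$ on the right, turn out to be self-adjoint or skew-adjoint, so $(h.4)$ collapses to $\Phi(a\circ b^{*})=\Phi(a)\circ\Phi(b)^{*}$, and the assertion is read off after straightening out a sign.

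First I would treat $a,b\in M_{sa}$. Then $b^{*}=b$ and $a\circ b^{*}=a\circ b$ is self-adjoint, hence $\Delta_{\lambda}(a\circ b^{*})=a\circ b$; likewise $\Phi(a)\circ\Phi(b)^{*}=\Phi(a)\circ\Phi(b)$ is self-adjoint (this is where $\Phi(a),\Phi(b)\in N_{sa}$ are used) and is thus $\Delta_{\lambda}$-fixed. Substituting $a,b$ in $(h.4)$ gives $\Phi(a\circ b)=\Phi(a)\circ\Phi(b)$. The statement for $a,b\in M^{+}$ then follows at once, since positive elements are self-adjoint and $\Phi(M^{+})=N^{+}\subseteq N_{sa}$ by Proposition \ref{p basic properties}$(b)$.

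For $a\in M_{sa}$, $b\in i M_{sa}$ (resp. $a,b\in i M_{sa}$) the same idea works, the new feature being $b^{*}=-b$. Now $a\circ b$ is skew-adjoint (resp. self-adjoint), and likewise $\Phi(a)\circ\Phi(b)$ is skew-adjoint (resp. self-adjoint) because $\Phi(a),\Phi(b)$ lie in $N_{sa}$ (resp. in $i N_{sa}$); in both cases $a\circ b$ and $\Phi(a)\circ\Phi(b)$ are $\Delta_{\lambda}$-fixed. Instead of $(a,b)$ I would substitute the pair $(a,-b)$ into $(h.4)$: since $(-b)^{*}=b$ one gets $a\circ(-b)^{*}=a\circ b$ on the left, and $\Phi(a)\circ\Phi(-b)^{*}=\Phi(a)\circ\Phi(b)$ on the right --- provided we know $\Phi(-b)=-\Phi(b)$, for then $\Phi(-b)^{*}=-\Phi(b)^{*}=\Phi(b)$. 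This auxiliary identity is the only point that needs a separate argument, and it comes from Proposition \ref{p basic properties}$(k)$: $-b$ is skew-adjoint, hence $\Delta_{\lambda}$-fixed, while $-\Phi(b)=\Phi(b)^{*}$ is skew-adjoint (because $\Phi(b)\in i N_{sa}$), hence also $\Delta_{\lambda}$-fixed, so $\Phi(-b)=\Phi(\Delta_{\lambda}(b^{*}))=\Delta_{\lambda}(\Phi(b)^{*})=-\Phi(b)$. With this in hand, the substitution $(a,-b)$ in $(h.4)$ gives $\Phi(a\circ b)=\Phi(a)\circ\Phi(b)$ in both remaining regimes, finishing the proof.

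I do not expect a genuine obstacle: the argument is short, and its only content beyond $(h.4)$ and ``self-adjoint and skew-adjoint elements are $\Delta_{\lambda}$-fixed'' is the sign bookkeeping forced by the involution, namely the identity $\Phi(-b)=-\Phi(b)$ for skew-adjoint $b$ with $\Phi(b)$ skew-adjoint. The one thing to watch carefully is which parity rule for the Jordan product of self-adjoint and skew-adjoint elements applies in each regime, so that one correctly records whether $a\circ b$ is self-adjoint or skew-adjoint before invoking \eqref{eq quasi-normal equiv to fixed point of the lambda-Aluthge transform}.
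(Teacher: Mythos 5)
Your proof is correct and follows essentially the same route as the paper: self-adjoint and skew-adjoint elements are normal, hence quasi-normal, hence fixed by $\Delta_{\lambda}$, so $(h.4)$ collapses to plain Jordan multiplicativity in each regime, and the positive case reduces to the hermitian one via Proposition \ref{p basic properties}$(b)$. Your explicit handling of the sign in the mixed and skew cases, via the identity $\Phi(-b)=-\Phi(b)$ and the substitution $(a,-b)$, is in fact more careful than the paper's, which merely asserts that the second statement ``follows from the same arguments above'' even though $\Phi(b)^*=-\Phi(b)$ there.
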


\begin{proof} Take $a,b\in M_{sa}$ such that $\Phi (a),\Phi (b)\in N_{sa}$. The Jordan product $\Phi (a)\circ \Phi (b)$ is a hermitian (and hence normal element in $N$). We therefore have $$\Phi (a)\circ \Phi (b) =\Delta_{\lambda} (\Phi (a)\circ \Phi (b))=\Delta_{\lambda} (\Phi (a)\circ \Phi (b)^*) = \Phi \Delta_{\lambda} (a\circ b^*) = \Phi (a \circ b).$$

If $a\in M_{sa}$, $b\in i M_{sa}$, $\Phi (a)\in N_{sa}$ and $\Phi(b)\in i N_{sa},$ since $\Phi (a)\circ \Phi (b)$ is a skew symmetric element (and hence normal), the second statement follows from the same arguments above. The proof of the third statement is very similar.\smallskip

Finally, if $a,b$ are positive elements in $M$, Proposition \ref{p basic properties}$(b)$ proves that $\Phi (a),\Phi (b)$ are positive elements in $N$.  Then the desired identity is a consequence of the first statement.
\end{proof}

\begin{corollary}\label{c hermitian and skew symmetric} Let $\Phi:M\to N$ be a bijective map between von Neumann algebras satisfying hypothesis $(h.4)$ in Problem \ref{problems}. Then the following statements hold: \begin{enumerate}[$(a)$]\item $\Phi (a)^* = \Phi (a)$ for all $a\in M_{sa}$. Consequently, $\Phi(M_{sa}) = N_{sa}$;
\item $\Phi (a \circ b ) = \Phi (a) \circ \Phi (b)$ for all $a,b\in M_{sa}$.
\end{enumerate}
\end{corollary}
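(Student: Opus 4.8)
The plan is to derive both parts directly from results already proved in the excerpt; no new idea is required beyond chaining them correctly. Part $(a)$ is literally the content of Corollary~\ref{c hermitian and skew symmetric for h3 and h4} specialized to a map satisfying $(h.4)$, so the cleanest route is simply to invoke it. For completeness I would also indicate the short self-contained argument: fix $a\in M_{sa}$; applying Proposition~\ref{p basic properties}$(k)$ together with $a^{*}=a$ and Proposition~\ref{p basic properties}$(j)$ one obtains
\[
\Delta_{\lambda}(\Phi(a)^{*}) = \Phi(\Delta_{\lambda}(a^{*})) = \Phi(\Delta_{\lambda}(a)) = \Phi(a),
\]
while $\Phi(a)$ is quasi-normal by Proposition~\ref{p basic properties}$(j)$; Lemma~\ref{l q-normal and symmetric} then forces $\Phi(a)^{*}=\Phi(a)$. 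Since $\Phi$ is bijective and $\Phi^{-1}$ also satisfies $(h.4)$, applying the same reasoning to $\Phi^{-1}$ gives the equality $\Phi(M_{sa}) = N_{sa}$.

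For part $(b)$, let $a,b\in M_{sa}$. By part $(a)$ we have $\Phi(a),\Phi(b)\in N_{sa}$, so the hypotheses of the first assertion of Proposition~\ref{p preservation of jordan products for positive in h.4} are satisfied, and that assertion yields $\Phi(a\circ b)=\Phi(a)\circ\Phi(b)$ immediately.

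I do not expect any genuine obstacle here: the only real subtlety has already been absorbed into Proposition~\ref{p preservation of jordan products for positive in h.4}, where one exploits that $\Phi(a)\circ\Phi(b)$ is hermitian, hence normal, hence a fixed point of $\Delta_{\lambda}$, in order to pass from the defining identity $(h.4)$ to multiplicativity on the self-adjoint part. Thus the present corollary is essentially a bookkeeping step that records the combination of Corollary~\ref{c hermitian and skew symmetric for h3 and h4} (for $(a)$) with Proposition~\ref{p preservation of jordan products for positive in h.4} (for $(b)$).
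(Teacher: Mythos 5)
Your proposal is correct and follows exactly the paper's route: part $(a)$ is invoked from Corollary \ref{c hermitian and skew symmetric for h3 and h4} (whose proof you accurately recap), and part $(b)$ is obtained by combining $(a)$ with the first assertion of Proposition \ref{p preservation of jordan products for positive in h.4}. Nothing is missing.
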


\begin{proof} Statement $(a)$ is proved in Corollary \ref{c hermitian and skew symmetric for h3 and h4}, while statement $(b)$ is a consequence of $(a)$ and Proposition \ref{p preservation of jordan products for positive in h.4}.
\end{proof}

Let $A$ be a C$^*$-algebra. We recall that elements $a$ and $b$ in $A_{sa}$ are said to \emph{operator commute} in $A_{sa}$ if the
Jordan multiplication operators $M_a (x) = a\circ x$ and $M_b (x) = b\circ x$ commute, that is, $a$ and $b$ operator
commute if and only if $(a\circ x) \circ b = a\circ (x\circ b)$ for
all $x$ in $A_{sa}$ (or for all $x\in A$). It is known that $a$ and $b$ operator commute if and only if they commute in the usual sense
(see \cite[Proposition 1]{Top}).

\begin{lemma}\label{lemma central elements} Let $\Phi:M\to N$ be a bijective map between von Neumann algebras satisfying hypothesis $(h.4)$ in Problem \ref{problems}. Suppose $a$ and $b$ in $M_{sa}$ (operator) commute in $M$. Then $\Phi(a)$ and $\Phi(b)$ (operator) commute in $N_{sa}$.  Consequently, $\Phi$ maps the center of $M$ to the center of $N$.
\end{lemma}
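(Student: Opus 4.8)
The plan is to reduce everything to the fact, recorded in Corollary~\ref{c hermitian and skew symmetric}, that $\Phi$ restricts to a \emph{bijection} $\Phi|_{M_{sa}}\colon M_{sa}\to N_{sa}$ which is a homomorphism for the Jordan product, together with the purely Jordan-algebraic characterization of operator commutativity recalled just before the statement (namely that $a,b\in A_{sa}$ operator commute if and only if $(a\circ x)\circ b=a\circ(x\circ b)$ for all $x\in A_{sa}$, and that this is equivalent to commuting in the usual sense by \cite[Proposition~1]{Top}).

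First I would unwind the hypothesis: $a,b\in M_{sa}$ operator commute means $(a\circ x)\circ b=a\circ(x\circ b)$ for every $x\in M_{sa}$. To check that $\Phi(a)$ and $\Phi(b)$ operator commute in $N_{sa}$, pick an arbitrary $y\in N_{sa}$; since $\Phi(M_{sa})=N_{sa}$ by Corollary~\ref{c hermitian and skew symmetric}$(a)$, we may write $y=\Phi(x)$ for some $x\in M_{sa}$. As $a\circ x\in M_{sa}$, applying Corollary~\ref{c hermitian and skew symmetric}$(b)$ twice yields
$$(\Phi(a)\circ y)\circ\Phi(b)=(\Phi(a)\circ\Phi(x))\circ\Phi(b)=\Phi(a\circ x)\circ\Phi(b)=\Phi((a\circ x)\circ b),$$
and in the same way $\Phi(a)\circ(y\circ\Phi(b))=\Phi(a\circ(x\circ b))$. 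The operator commutativity of $a$ and $b$ makes the two arguments of $\Phi$ coincide, hence $(\Phi(a)\circ y)\circ\Phi(b)=\Phi(a)\circ(y\circ\Phi(b))$ for every $y\in N_{sa}$; that is, $\Phi(a)$ and $\Phi(b)$ operator commute in $N_{sa}$, equivalently they commute in the usual sense.

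For the last assertion I would argue as follows. If $z$ is a self-adjoint element of the center of $M$, then $z$ commutes, hence operator commutes, with every $b\in M_{sa}$; by the previous paragraph $\Phi(z)$ operator commutes with $\Phi(b)$, and since $\Phi(M_{sa})=N_{sa}$, $\Phi(z)$ operator commutes with all of $N_{sa}$, so $\Phi(z)$ lies in the center of $N$. Since $\Phi^{-1}$ also satisfies $(h.4)$, the same reasoning applied to $\Phi^{-1}$ gives the reverse inclusion, so $\Phi$ carries the self-adjoint part of the center of $M$ onto the self-adjoint part of the center of $N$, which is the desired conclusion.

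I do not expect a genuine obstacle here: the content is entirely carried by Corollary~\ref{c hermitian and skew symmetric} (Jordan multiplicativity on the self-adjoint parts together with $\Phi(M_{sa})=N_{sa}$) plus the Jordan-algebraic description of commutativity. The only point requiring a little care is that operator commutativity in $N_{sa}$ must be tested against \emph{every} $y\in N_{sa}$, which is precisely why surjectivity of $\Phi|_{M_{sa}}$ is needed, and not merely its Jordan multiplicativity.
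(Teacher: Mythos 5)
Your argument is correct and is essentially the paper's own proof: both reduce the claim to the Jordan multiplicativity of $\Phi|_{M_{sa}}$ from Corollary~\ref{c hermitian and skew symmetric} together with the characterization of operator commutativity via $(a\circ x)\circ b=a\circ(x\circ b)$, transporting the identity through $\Phi$. You are in fact slightly more careful than the paper in making explicit that surjectivity of $\Phi|_{M_{sa}}$ onto $N_{sa}$ is what lets the identity be tested against every $y\in N_{sa}$, and in spelling out the deduction for the (self-adjoint part of the) center, which the paper leaves implicit.
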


\begin{proof} $a$ and $b$ in $M_{sa}$ (operator) commute in $M$ if and only if $(a\circ x) \circ b = a\circ (x\circ b)$ for
all $x$ in $M_{sa}$. By Corollary \ref{c hermitian and skew symmetric} we have $$(\Phi(a)\circ \Phi(x)) \circ \Phi(b) = \Phi(a\circ x) \circ \Phi(b) = \Phi\left((a\circ x) \circ b\right) = \Phi\left((b\circ x) \circ a\right)$$ $$= \Phi(b\circ x) \circ \Phi(a) = \Phi(a)\circ (\Phi(x)\circ \Phi(b)),$$ for all $x\in M_{sa}$, which assures that $\Phi(a)$ and $\Phi(b)$ (operator) commute in $N_{sa}$.
\end{proof}

Inspired by techniques developed by J. Hakeda and K. Sait\^{o} in \cite{Hak86a, Hak86b, HakSaito86} we establish our next result.

\begin{proposition}\label{p consequences preservation of jordan products for positive in h.4} Let $\Phi:M\to N$ be a bijective map between von Neumann algebras satisfying hypothesis $(h.4)$ in Problem \ref{problems}. Then the following statements hold:\begin{enumerate}[$(a)$]
\item Let $p_1,\ldots,p_m$ be mutually orthogonal projections in $M$, and let $\alpha_1,\ldots, \alpha_m$ be elements in $\mathbb{R}$. Then the identity $$\displaystyle \Phi\left(\sum_{j=1}^{m} \alpha_j p_j\right)=\sum_{j=1}^{m} \Phi(\alpha_j p_j)$$ holds;
\item $\Phi (-\textbf{1}) = -\Phi(\textbf{1})=-\textbf{1}$;
\item $\Phi (-x) = -\Phi (x)$, for every hermitian (or skew symmetric) element $x$ in $ M$;
\item $\Phi (a)^* = - \Phi (a)$ for all $a\in i M_{sa}$;
\item $\Phi (a \circ b ) = \Phi (a) \circ \Phi (b)$ for all $a\in M_{sa},$ $b\in i M_{sa}$;
\item $\Phi (a \circ b ) = \Phi (a) \circ \Phi (b)$ for all $a,b\in i M_{sa}$;
\item $\Phi (-x) = -\Phi (x)$, for every skew symmetric element $x$ in $M$;
\item For each projection $p$ in $M$ we have $\Phi (-p) = -\Phi(p)$ and $\Phi (2 p -\textbf{1}) = 2 \Phi (p)-\textbf{1}$;
\item The identity $\Phi (p b p) = \Phi (p) \Phi(b) \Phi (p)$ holds for every $b\in M_{sa}\cup i M_{sa}$ and every projection $p$ in $M$;
\item Suppose $M$ is a von Neumann algebra which has no abelian direct summand. Then $\Phi|_{\mathbb{R} \textbf{1}}$ is additive;
\end{enumerate}
\end{proposition}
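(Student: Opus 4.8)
The plan is to reduce additivity of $f(\gamma):=\Phi(\gamma\textbf{1})$ to a single Jordan identity evaluated on a $2\times 2$ matrix-unit configuration, and then to globalise the resulting corner-wise equality by exploiting the absence of an abelian summand. The starting point is that, by Lemma \ref{lemma central elements}, each $f(\gamma)$ lies in the self-adjoint part of the centre of $N$, while Corollary \ref{c hermitian and skew symmetric}$(b)$ makes $\Phi|_{M_{sa}}$ a Jordan homomorphism. Applying the latter to $(\gamma\textbf{1})\circ y=\gamma y$ gives the multiplicative rule
$$\Phi(\gamma y)=f(\gamma)\,\Phi(y),\qquad \gamma\in\mathbb{R},\ y\in M_{sa},$$
where the product is literal because $f(\gamma)$ is central. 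This is the device that pulls real scalars out of $\Phi$.

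Next I would fix two nonzero orthogonal Murray--von Neumann equivalent projections $e\sim f$ in $M$, with partial isometry $u$ satisfying $u^*u=e$, $uu^*=f$, set $r=e+f$ and $v=u+u^*\in (rMr)_{sa}$, so that $v^2=r$, $vev=f$ and $vfv=e$. For $x=\alpha e+\beta f$ one has the purely Jordan identity $2(v\circ x)\circ v=vxv+x=(\alpha+\beta)r$. Applying $\Phi$, and using Corollary \ref{c hermitian and skew symmetric}$(b)$ to preserve the two Jordan products, the multiplicative rule above to extract the factors $2,\alpha,\beta$, and statements $(a)$ and $(i)$ of the present proposition together with Proposition \ref{p basic properties} to split the orthogonal sum and to localise at $r$, I obtain — writing $R=\Phi(r)$, $P=\Phi(e)$, $Q=\Phi(f)$ and $w=\Phi(v)\in(RNR)_{sa}$, the latter a symmetry with $w^2=R$, $wPw=Q$, $wQw=P$ (all read off from Jordan-triple preservation and $(i)$) —
$$f(\alpha+\beta)\,R=\tfrac{f(2)}{2}\,\big(f(\alpha)+f(\beta)\big)\,R,$$
the centrality of $f(\alpha),f(\beta)$ being used to commute them past $w$. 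Specialising $\alpha=1,\beta=0$ yields $(f(2)-2\,\textbf{1})R=0$, and once $f(2)=2\,\textbf{1}$ is known the general case yields $(f(\alpha+\beta)-f(\alpha)-f(\beta))R=0$.

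The decisive step is to promote such corner identities $DR=0$, with $D$ a fixed central self-adjoint element of $N$, to $D=0$, and this is where the no-abelian-summand hypothesis enters through two facts. First, $\Phi$ preserves central supports: since both $\Phi$ and $\Phi^{-1}$ send centre to centre by Lemma \ref{lemma central elements}, $\Phi$ restricts to an order isomorphism between the lattices of central projections, so that $c_N(\Phi(r))=\Phi(c_M(r))$ for every projection $r$. Second, the doubled projections $r=e+f$ have central supports covering the identity: if $z_0=\sup_r c_M(r)\neq\textbf{1}$, then $(\textbf{1}-z_0)M$ would be a nonzero, hence nonabelian, von Neumann algebra containing no pair of orthogonal equivalent nonzero projections, contradicting the hypothesis; thus $z_0=\textbf{1}$. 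Combining these, if $D\in Z(N)_{sa}$ satisfies $DR=0$ for all such $R$, its central support $s(D)$ is orthogonal to every $\Phi(r)$, hence to $c_N(\Phi(r))=\Phi(c_M(r))$, hence to $\sup_r\Phi(c_M(r))=\Phi(z_0)=\textbf{1}$ (using that the order isomorphism $\Phi$ preserves arbitrary suprema), forcing $s(D)=0$ and $D=0$. Taking $D=f(2)-2\,\textbf{1}$ gives $f(2)=2\,\textbf{1}$, and then $D=f(\alpha+\beta)-f(\alpha)-f(\beta)$ gives the desired additivity of $\Phi|_{\mathbb{R}\textbf{1}}$.

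I expect the main obstacle to be exactly this last globalisation: a single matrix-unit corner produces only an identity multiplied on the right by the projection $R=\Phi(r)$, so cancelling $R$ requires simultaneously that $\Phi$ respect central supports and that the relevant corners be centrally total. The structural input driving the latter — that every nonzero nonabelian von Neumann algebra contains a pair of orthogonal equivalent nonzero projections, via its type $\mathrm{I}_n$ $(n\ge 2)$, $\mathrm{II}$ and $\mathrm{III}$ decomposition — is the technical heart forcing $z_0=\textbf{1}$, and it is the only place where no abelian direct summand is genuinely used.
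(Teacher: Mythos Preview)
Your argument is correct and reaches the same conclusion, but the execution differs from the paper's in two places worth noting. First, the paper avoids introducing the unknown $f(2)$ altogether by a neater choice of auxiliary data: inside each $2\times 2$ block it takes the orthogonal projections $p^k=\tfrac12(u_{ii}+u_{ji})(u_{ii}+u_{ij})$ and $q^k=\tfrac12(u_{ii}-u_{ji})(u_{ii}-u_{ij})$, so that $u_{ii}(2\alpha p^k+2\beta q^k)u_{ii}=(\alpha+\beta)u_{ii}$; applying $(i)$, then $(a)$ to split $\Phi(2\alpha p^k)+\Phi(2\beta q^k)$, then $(i)$ again, the factors of $2$ are reabsorbed and one lands directly on $\Phi((\alpha+\beta)\mathbf 1)\circ\Phi(u_{ii})=\Phi(\alpha\mathbf 1)\circ\Phi(u_{ii})+\Phi(\beta\mathbf 1)\circ\Phi(u_{ii})$ with no preliminary ``$f(2)=2$'' step. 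Second, for globalisation the paper fixes a concrete cover by matrix-unit diagonal projections $\{u^k_{ii}\}$ with $\sum_{k,i}\Phi(u^k_{ii})=\mathbf 1$ and simply sums the corner identities; your central-support argument achieves the same end more abstractly and with less bookkeeping, at the price of verifying that $\Phi$ preserves central supports and that orthogonal equivalent pairs are centrally total. One small point on your local step: the claim $wPw=Q$ does not follow from ``Jordan-triple preservation'' in the usual sense (you do not yet know $\Phi$ is additive, so $\Phi(U_v e)=U_{\Phi(v)}\Phi(e)$ is unavailable), but it \emph{does} follow from $(i)$ alone: $(i)$ gives $PwP=\Phi(eve)=0$ and $QwQ=\Phi(fvf)=0$, whence $w=PwQ+QwP$, and then $w^2=R$ forces $QwPwQ=Q$, i.e.\ $wPw=Q$.
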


\begin{proof} As in the proof of Proposition \ref{p basic properties}, the statements in the case $\lambda=0$ follow from Corollary \ref{c Hakeda Saito 86 for (4) or (3)}.\smallskip

$(a)$ Applying Proposition \ref{p basic properties}$(c)$, $(f)$ and $(h)$, we know that $\Phi(p_1),$ $\ldots,$ $\Phi(p_m)$ are mutually orthogonal projections in $N,$ and $$\displaystyle \Phi\left(\sum_{j=1}^{m} p_j\right)=\sum_{j=1}^{m} \Phi(p_j)$$ is another projection in $N$. Now, applying Corollary \ref{c hermitian and skew symmetric} twice we get $$\Phi\left(\sum_{j=1}^{m} \alpha_j p_j\right)= \Phi\left( \left(\sum_{j=1}^{m} \alpha_j p_j\right) \circ \left(\sum_{k=1}^{m} p_k\right)\right)= \Phi\left(\sum_{j=1}^{m} \alpha_j p_j\right) \circ \Phi\left(\sum_{k=1}^{m} p_k\right)$$ $$ = \Phi\left(\sum_{j=1}^{m} \alpha_j p_j\right) \circ \left(\sum_{k=1}^{m} \Phi(p_k) \right) = \sum_{k=1}^{m} \Phi\left(\sum_{j=1}^{m} \alpha_j p_j\right) \circ \Phi(p_k)  $$ $$ =  \sum_{k=1}^{m} \Phi\left( \left(\sum_{j=1}^{m} \alpha_j p_j\right) \circ p_k\right) = \sum_{k=1}^{m} \Phi(\alpha_k p_k).$$

$(b)$ Corollary \ref{c hermitian and skew symmetric} proves that $\Phi (- \textbf{1})$ is a hermitian element and $$ \Phi (- \textbf{1}) \circ \Phi (- \textbf{1}) = \Phi \left( (-\textbf{1})\circ (- \textbf{1})\right) =\Phi (\textbf{1})=\hbox{(Proposition \ref{p basic properties}$(e)$)} =\textbf{1}.$$ It is well known from spectral theory that in this case we have $\Phi (- \textbf{1}) = q - (\textbf{1}-q)$ for a unique projection $q$ in $N$. By applying Proposition \ref{p basic properties}$(d)$, $(f)$ and $(h)$ we find a projection $e$ in $M$ satisfying $\Phi (e) = \textbf{1}-q$ and $\Phi (\textbf{1}-e) = q$. Now, Corollary \ref{c hermitian and skew symmetric} implies that $$-\Phi (e) =- (\textbf{1}-q) = (\textbf{1}-q)\circ ( q - (\textbf{1}-q) ) = \Phi(e)\circ \Phi (- \textbf{1}) =  \Phi(e \circ(- \textbf{1}) ) = \Phi (-e). $$ Therefore, by $(a)$, $$\Phi ( (\textbf{1}-e) - e) = \Phi ((\textbf{1}-e)) + \Phi (-e) = \Phi ((\textbf{1}-e)) - \Phi (e) = q - (\textbf{1}-q) =\Phi (-\textbf{1}).$$ We deduce from the injectivity of $\Phi$ that $-\textbf{1} = (\textbf{1}-e) - e$, and hence $e = \textbf{1}$ and $-\textbf{1}= -\Phi (\textbf{1})= -\Phi(e) = \Phi (-e) = \Phi (-\textbf{1}).$ \smallskip

$(c)$ Let $x$ be a hermitian element in $M$. By Corollary \ref{c hermitian and skew symmetric} and $(b)$ we get $- \Phi(x) =  \Phi (x) \circ (-\textbf{1})= \Phi (x) \circ \Phi (-\textbf{1}) =\Phi (x \circ (-\textbf{1}) ) = \Phi (-x)$.\smallskip

$(d)$ Suppose $a\in i M$. Proposition \ref{p basic properties}$(k)$ and $(c)$ assure that  $$\Delta_{\lambda} (\Phi(a)^*) = \Phi(\Delta_{\lambda} (a^*)) = \Phi (\Delta_{\lambda} (- a)) = \Phi (-a) = - \Phi (a).$$ Proposition \ref{p basic properties}$(j)$ guarantees that $\Phi (a)$ is quasi-normal, and Lemma \ref{l q-normal and symmetric} proves that $\Phi (a)^* = \Phi (a)$.\smallskip

Statements $(e)$ and $(f)$ are clear consequences of $(d)$ and  Proposition \ref{p preservation of jordan products for positive in h.4}, while $(g)$ follows from $(d)$ and $(e)$.\smallskip

$(h)$ Let us take a projection $p$ in $M$. By $(a)$ and $(c)$ we have $$ \Phi (2 p- \textbf{1})  = \Phi \left( p - (\textbf{1}-p)\right) = \Phi (p ) + \Phi (- (\textbf{1}-p) ) $$ $$= \Phi (p ) - \Phi (\textbf{1}-p) =  \hbox{(Proposition \ref{p basic properties}$(i)$)}=  2 \Phi (p)-\textbf{1}. $$

$(i)$ Let us fix a projection $p$ in $M$ and $b\in M_{sa}\cup i M_{sa}$. The elements $(2p -  \textbf{1} )$ and $p$ are hermitian. By $(h)$ we know that $\Phi (2 p- \textbf{1}) = 2 \Phi (p)-\textbf{1}$. Corollary \ref{c hermitian and skew symmetric} and $(e)$ assert that $$\Phi ( p b p ) = \Phi (((2 p- \textbf{1})\circ b) \circ p ) =  \Phi (((2 p- \textbf{1})\circ b)) \circ \Phi (p ) $$ $$= \left( (\Phi (2 p- \textbf{1}) \circ \Phi(b))\right) \circ \Phi (p ) =  \left( (2 \Phi (p)-\textbf{1}) \circ \Phi(b))\right) \circ \Phi (p ) = \Phi (p) \Phi (b) \Phi (p).$$

$(j)$ Let $M$ be a von Neumann algebra which has no abelian direct summand.  Arguing as in \cite[proof of Corollary 2.7]{Hak86a} or in \cite[proof of COROLLARY]{HakSaito86}, we can find a family $\{p_{k} : k\in I\}$  of central orthogonal projections in $M$ such that $\displaystyle \sum_{k\in I} p_{k}=\textbf{1}$, there exists $k_0\in I$ such that  $M p_{k_0}$ has no finite type I direct summand, and $M p_{k}$ is homogeneous of type $I_{n_{k}}$ $(n_{k}\geqq 2)$  for all  $k\neq k_0$. Proposition \ref{p basic properties}$(c)$ and $(f)$ and Lemma \ref{lemma central elements} imply that  $\{\Phi(p_{k}) : k\in I\}$ is a family of central orthogonal projections in $N$. Clearly, $\displaystyle \sum_{k\in I} \Phi(p_k)$ is a central projection in $N$, where the series converges with respect to the weak$^*$ topology of $N$ (cf. \cite[Definition 1.13.4]{S}). If $\displaystyle \sum_{k\in I} \Phi(p_k)\neq \textbf{1}$, again by Proposition \ref{p basic properties}$(c)$ and $(f)$ and Corollary \ref{c hermitian and skew symmetric}, we can find a central projection $p\in M$ such that $\Phi (p) =  \textbf{1}- \displaystyle \sum_{k\in I} \Phi(p_k)$. Since the product in $N$ is separately weak$^*$-continuous (see \cite[Theorem 1.7.8]{S}), we also get from  Corollary \ref{c hermitian and skew symmetric} that $$0=\Phi(p) \Phi(p_k) = \Phi(p)\circ \Phi(p_k) = \Phi(p\circ p_k)= \Phi(p p_k),$$ for all $i\in I$. The injectivity of $\Phi$ assures that $p p_k=0$ for all $k\in I$, and hence $p=0$ and $\textbf{1}= \displaystyle \sum_{k\in I} \Phi(p_k)$.\smallskip

For each $k\in I$, let $\{u^k_{ij}: i,j=1,\ldots,n_k \}$ be a \emph{system of $n \times n$ matrix units} in $M p_k$, where $n_k$ is an integer greater than or equal to $2$. Let us fix $\alpha,\beta\in \mathbb{R}$. Taking $i\neq j$ in $\{1,\ldots,n_k\}$, the elements $p^k = \frac12 (u^k_{ii} + (u^k_{ij})^*)(u^k_{ii} + u^k_{ij})$ and $q^k = \frac12 (u^k_{ii} - (u^k_{ij})^*)(u^k_{ii} - u^k_{ij})$ are orthogonal projections in $M$. Therefore, by applying Corollary \ref{c hermitian and skew symmetric} we get
$$  \Phi \left( (\alpha+\beta) \textbf{1} \right)\circ \Phi (u^k_{ii}) = \Phi \left( (\alpha+\beta) \textbf{1} \circ u^k_{ii}\right)= \Phi \left( u^k_{ii} ((\alpha+\beta) \textbf{1}) u^k_{ii}\right) $$ $$ = \Phi \left( u^k_{ii} ( 2 \alpha p^k + 2 \beta q^k)) u^k_{ii}\right) =\hbox{(by $(i)$)}= \Phi ( u^k_{ii}) \Phi \left( 2 \alpha p^k + 2 \beta q^k\right) \Phi (u^k_{ii}) $$ $$ =\hbox{(by $(a)$ above)}=  \Phi ( u^k_{ii}) \left( \Phi \left( 2 \alpha p^k \right) + \Phi \left(2 \beta q^k\right) \right) \Phi (u^k_{ii}) $$
$$= \Phi ( u^k_{ii}) \Phi \left( 2 \alpha p^k \right)  \Phi (u^k_{ii}) + \Phi ( u^k_{ii}) \Phi \left(2 \beta q^k\right) \Phi (u^k_{ii}) $$ $$=\hbox{(by $(i)$)}= \Phi (  2 \alpha u^k_{ii} p^k u^k_{ii}) + \Phi ( 2 \beta u^k_{ii} q^k u^k_{ii}) = \Phi (  \alpha u^k_{ii}) + \Phi ( \beta u^k_{ii})  $$ $$ = \Phi (  ( \alpha  \textbf{1}) \circ u^k_{ii}) + \Phi ( (\beta  \textbf{1}) \circ u^k_{ii}) = \Phi ( \alpha  \textbf{1}) \circ  \Phi ( u^k_{ii}) + \Phi (\beta  \textbf{1}) \circ  \Phi (u^k_{ii}),$$ which assures that
$$ \Phi \left( (\alpha+\beta) \textbf{1} \right) = \Phi \left( (\alpha+\beta) \textbf{1} \right) \Phi(\textbf{1}) = \Phi \left( (\alpha+\beta) \textbf{1} \right) \Phi\left(  p_{k} \right)  $$
$$= \Phi \left( (\alpha+\beta) \textbf{1} \right) \left( \sum_{k\in I} \Phi(p_{k}) \right) =  \sum_{k\in I} \Phi \left( (\alpha+\beta) \textbf{1} \right) \Phi\left(  p_{k} \right) $$
$$=\sum_{k\in I} \Phi \left( (\alpha+\beta) \textbf{1} \right)\circ \Phi \left(\sum_{i=1}^{n_k} u^k_{ii} \right)=\hbox{(by Proposition \ref{p basic properties}$(f)$)} $$ $$= \sum_{k\in I} \Phi \left( (\alpha+\beta) \textbf{1} \right)\circ  \left(\sum_{i=1}^{n_k} \Phi (u^k_{ii})  \right)  = \sum_{k\in I}   \sum_{i=1}^{n_k}  \Phi \left( (\alpha+\beta) \textbf{1} \right)\circ \Phi (u^k_{ii})  $$ $$= \sum_{k\in I}  \sum_{i=1}^{n_k} \left( \Phi ( \alpha  \textbf{1}) \circ  \Phi ( u^k_{ii}) + \Phi (\beta  \textbf{1}) \circ  \Phi (u^k_{ii})\right) $$ $$=   \sum_{k\in I}  \Phi ( \alpha  \textbf{1}) \circ  \left(\sum_{i=1}^{n_k}\Phi ( u^k_{ii}) \right)+ \sum_{k\in I} \Phi (\beta  \textbf{1}) \circ  \left(\sum_{i=1}^{n_k} \Phi (u^k_{ii}) \right)  $$ $$=   \sum_{k\in I}  \Phi ( \alpha  \textbf{1}) \circ  \Phi \left(\sum_{i=1}^{n_k} u^k_{ii} \right)+ \sum_{k\in I} \Phi (\beta  \textbf{1}) \circ  \Phi\left(\sum_{i=1}^{n_k}  u^k_{ii} \right) $$ $$=   \sum_{k\in I}  \Phi ( \alpha  \textbf{1}) \circ  \Phi \left(p_k \right)+ \sum_{k\in I} \Phi (\beta  \textbf{1}) \circ  \Phi\left(p_k \right) $$ $$=    \Phi ( \alpha  \textbf{1}) \circ  \left( \sum_{k\in I} \Phi \left(p_k \right)\right) + \Phi (\beta  \textbf{1}) \circ  \left( \sum_{k\in I} \Phi\left(p_k \right)\right)=\Phi ( \alpha  \textbf{1}) + \Phi ( \beta  \textbf{1}).$$
\end{proof}

Our next result, which can be considered a consequence of Kantorovic's Theorem  (see \cite[Theorem 1.7]{AlBurk1985}), will play a key role in our arguments.

\begin{lemma}\label{l additivity on hermitian gives lienarity on hermitian} Let $A$ and $B$ be C$^*$-algebras. Suppose $F: A_{sa}\to B_{sa}$ is a mapping satisfying the following assumptions:
\begin{enumerate}[$(1)$]
\item $F(0)=0$;
\item $F$ is additive;
\item $F$ is Jordan multiplicative, i.e, $F(x \circ y) = F(x) \circ {F(y)}$ for all $x,y\in A_{sa}$.
\end{enumerate} Then $F$ is linear.
\end{lemma}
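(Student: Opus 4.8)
The plan is to prove linearity in two stages: first establish rational-linearity from additivity, then bootstrap to full real-linearity using the order structure that $F$ automatically respects because it is Jordan multiplicative between self-adjoint parts of C$^*$-algebras.

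First I would record the elementary consequences of additivity together with $F(0)=0$: the map $F$ is a group homomorphism of $(A_{sa},+)$ into $(B_{sa},+)$, so $F(nx)=nF(x)$ for $n\in\mathbb{Z}$, and then $F(qx)=qF(x)$ for every rational $q$, in the usual way (apply the homomorphism property to $q x = \frac{p}{m} x$ and use $m F(\frac{1}{m} x) = F(x)$). So $F$ is $\mathbb{Q}$-linear, and it only remains to upgrade $\mathbb{Q}$-homogeneity to $\mathbb{R}$-homogeneity, which is where the Jordan-multiplicativity and positivity enter.

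The key point is that $F$ is positivity-preserving: if $x\in A_{sa}$ with $x\geq 0$, write $x = y^2 = y\circ y$ with $y = x^{1/2}\in A_{sa}$; then $F(x) = F(y\circ y) = F(y)\circ F(y) = F(y)^2 \geq 0$ in $B_{sa}$. Hence $F$ is a positive additive map between the self-adjoint parts. Combined with $\mathbb{Q}$-linearity, positivity forces monotonicity and then automatic continuity on order-bounded sets: for $x\in A_{sa}$ and any real $t$, choosing rationals $q_n \uparrow t$ and $r_n\downarrow t$ we get $q_n F(x^+) - r_n F(x^-) \leq F(tx) \leq r_n F(x^+) - q_n F(x^-)$ when $x\geq 0$ (and the general case by splitting $x = x^+ - x^-$ using additivity), so squeezing in $n$ yields $F(tx) = t F(x)$. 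This is exactly the Kantorovich-type argument referenced before the statement: a $\mathbb{Q}$-linear positive map between ordered Banach spaces whose positive cones are (almost) generating and archimedean is automatically $\mathbb{R}$-linear. I would cite \cite[Theorem 1.7]{AlBurk1985} to close this step cleanly rather than reproving it, having verified its two hypotheses — additivity (assumption $(2)$) and positivity (just established).

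The main obstacle is making the positivity argument airtight, specifically the step $F(y\circ y) = F(y)\circ F(y)$: this is immediate from assumption $(3)$, but one must be sure that $x^{1/2}$ lies in $A_{sa}$, which is automatic since $A$ is a C$^*$-algebra and $x\geq 0$ (note $A$ need not be unital, but $x^{1/2}$ still exists in $A$). A secondary subtlety is the passage from $\mathbb{Q}$-homogeneity plus positivity to $\mathbb{R}$-homogeneity when $A$ is non-unital or the cone is not norm-generating with uniform constant; here one uses that every self-adjoint element is a difference of two positives and that $B_{sa}$ is archimedean (true in any C$^*$-algebra), which is precisely the generality in which the cited Kantorovich theorem applies. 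Once $\mathbb{R}$-homogeneity is in hand, linearity follows by combining it with additivity, completing the proof.
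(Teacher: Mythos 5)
Your proposal is correct and follows essentially the same route as the paper's proof: establish $\mathbb{Q}$-linearity from additivity, show $F$ preserves positivity via $F(y\circ y)=F(y)^2$, run the Kantorovich-type rational squeeze on positive elements, and extend to all of $A_{sa}$ by writing $z=z^+-z^-$. The paper likewise invokes \cite[Theorem 1.7]{AlBurk1985} for the squeeze step, so there is no substantive difference.
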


\begin{proof} We shall first prove that $F(-z) =- F(z),$ for all $z\in A_{sa}$. Indeed, by the additivity of $F$ we get $0 = F(0) = F(z-z) = F (z) + F(-z),$ and hence $F(-z) = -F(z)$.\smallskip

We shall next show that $h(A^+)\subseteq B^+$. Namely, if $x\in A^+$ we can pick $y\in A^+$ satisfying $y^2 = x$. Since $F$ is Jordan multiplicative, we have $F(x ) = F(y^2) = F(y)^2,$ which proves the desired statement.\smallskip

Since $F$ is additive and $F(-z) = - F(z)$ for all $z\in A_{sa}$, we deduce by induction that $F (r z) = r F(z)$ for all $z\in A_{sa}$, and all $r\in \mathbb{Q}$. \smallskip

We shall next show that $F(\alpha x) = \alpha F(x)$ for all $x\in A^+$ and for all real $\alpha$. We apply an argument which is already in the proof of Kantorovic's Theorem  (see \cite[Theorem 1.7]{AlBurk1985}). For each real $\alpha$, we can find sequences $(r_n)$ and $(t_n)$ in $\mathbb{Q}$ such that $r_n\leq \alpha\leq t_n,$ for every $n$, $(r_n)\to \alpha$ and $(t_n)\to \alpha$. Since $F$ is positive, additive, and $\mathbb{Q}$-linear we have $$ r_n F(x) = F(r_n x) \leq F(\alpha x) \leq F(t_n x) = t_n F(x),$$ for all natural $n$. Taking limits in $n$ we get $F(\alpha x ) = \alpha F(x)$. Now, given $\alpha\in \mathbb{R}$ and $z\in A_{sa}$ we write $z = z^+ - z^-$ with $z^+$, $z^-$ in $A^+$, and hence $$F(\alpha z) = F(\alpha z^+ - \alpha z^-) =  F(\alpha z^+) + F(- \alpha z^-)= \alpha  F( z^+)  - \alpha F(z^-) = \alpha F(z).$$\end{proof}

\begin{corollary}\label{c linearity on R1} Let $\Phi:M\to N$ be a bijective map between von Neumann algebras satisfying hypothesis $(h.4)$ in Problem \ref{problems}. Suppose $M$ is a von Neumann algebra which has no abelian direct summand. Then $\Phi( \alpha \textbf{1}) = \alpha \textbf{1}$ for all $\alpha\in \mathbb{R}$.
\end{corollary}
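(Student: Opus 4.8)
\emph{Proof proposal.} The plan is to deduce the statement from Lemma~\ref{l additivity on hermitian gives lienarity on hermitian} applied to the restriction $F := \Phi|_{\mathbb{R}\textbf{1}}$. First I would observe that $\mathbb{C}\textbf{1}$ is a (commutative, unital) C$^*$-subalgebra of $M$ whose self-adjoint part is $\mathbb{R}\textbf{1}$, and that, by Corollary~\ref{c hermitian and skew symmetric}$(a)$, $\Phi$ maps $M_{sa}$ into $N_{sa}$; in particular $F$ is a well-defined map $F:(\mathbb{C}\textbf{1})_{sa}=\mathbb{R}\textbf{1}\to N_{sa}$.

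Next I would check the three hypotheses of Lemma~\ref{l additivity on hermitian gives lienarity on hermitian} for $F$. Condition $(1)$ is Proposition~\ref{p basic properties}$(a)$. Condition $(2)$, i.e.\ the additivity of $F$, is precisely Proposition~\ref{p consequences preservation of jordan products for positive in h.4}$(j)$; this is the single point where the hypothesis that $M$ has no abelian direct summand is used. For condition $(3)$, given $\alpha,\beta\in\mathbb{R}$ one has $(\alpha\textbf{1})\circ(\beta\textbf{1})=\alpha\beta\textbf{1}\in M_{sa}$, so Corollary~\ref{c hermitian and skew symmetric}$(b)$ yields $F\big((\alpha\textbf{1})\circ(\beta\textbf{1})\big)=\Phi(\alpha\beta\textbf{1})=\Phi(\alpha\textbf{1})\circ\Phi(\beta\textbf{1})=F(\alpha\textbf{1})\circ F(\beta\textbf{1})$, which is Jordan multiplicativity.

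Lemma~\ref{l additivity on hermitian gives lienarity on hermitian} then gives that $F$ is $\mathbb{R}$-linear, whence $\Phi(\alpha\textbf{1})=F(\alpha\textbf{1})=\alpha F(\textbf{1})=\alpha\Phi(\textbf{1})=\alpha\textbf{1}$ for every $\alpha\in\mathbb{R}$, using $\Phi(\textbf{1})=\textbf{1}$ from Proposition~\ref{p basic properties}$(e)$. There is no real obstacle remaining at this stage: all the difficulty has been front-loaded into Proposition~\ref{p consequences preservation of jordan products for positive in h.4}$(j)$, which produces additivity on $\mathbb{R}\textbf{1}$, and into Lemma~\ref{l additivity on hermitian gives lienarity on hermitian}, whose Kantorovich-type squeezing argument upgrades $\mathbb{Q}$-homogeneity to $\mathbb{R}$-homogeneity; the only thing requiring a moment of care is recasting the problem in the form required by that lemma, namely treating $\mathbb{C}\textbf{1}$ as a C$^*$-algebra and verifying that $F$ takes values in $N_{sa}$.
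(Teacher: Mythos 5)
Your proposal is correct and follows essentially the same route as the paper: both verify the three hypotheses of Lemma~\ref{l additivity on hermitian gives lienarity on hermitian} for $\Phi|_{\mathbb{R}\textbf{1}}$ via Proposition~\ref{p basic properties}$(a)$, Proposition~\ref{p consequences preservation of jordan products for positive in h.4}$(j)$, and Corollary~\ref{c hermitian and skew symmetric}$(b)$, and then conclude using $\Phi(\textbf{1})=\textbf{1}$. Your explicit remark that one is applying the lemma with $A=\mathbb{C}\textbf{1}$ is a small but welcome clarification that the paper leaves implicit.
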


\begin{proof} Corollary \ref{c hermitian and skew symmetric for h3 and h4} assures that $\Phi (M_{sa})= N_{sa}$ and hence $\Phi|_{\mathbb{R} \textbf{1}} : \mathbb{R} \textbf{1} \to N_{sa}$. Proposition \ref{p basic properties}$(a)$ implies that $\Phi (0) =0$. Corollary \ref{c hermitian and skew symmetric} implies that $\Phi|_{\mathbb{R} \textbf{1}}$ is Jordan multiplicative, and Proposition \ref{p consequences preservation of jordan products for positive in h.4}$(j)$ implies that $\Phi|_{\mathbb{R} \textbf{1}}$ is additive. We deduce from Lemma \ref{l additivity on hermitian gives lienarity on hermitian} that $\Phi|_{\mathbb{R} \textbf{1}}$ is linear. Since $\Phi(\textbf{1}) = \textbf{1}$, it follows that $\Phi( \alpha \textbf{1}) = \alpha \textbf{1}$ for all $\alpha\in \mathbb{R}$.
\end{proof}

Our next result can be deduced via arguments developed by J. Hakeda and K. Sait\^{o} in \cite[Lemmas 5 to 8]{HakSaito86}. We include here a complete proof for completeness reasons.

\begin{proposition}\label{p additivity on hermitian} Let $\Phi:M\to N$ be a bijective map between von Neumann algebras satisfying hypothesis $(h.4)$ in Problem \ref{problems}. Suppose $M$ is a von Neumann algebra which has no abelian direct summand. Then $\Phi$ is additive on hermitian elements.
\end{proposition}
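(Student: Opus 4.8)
The plan is to adapt, to the present $\lambda$-Aluthge setting, the additivity argument of Hakeda and Sait\^{o} \cite[Lemmas 5 to 8]{HakSaito86}; only additivity of $\Phi$ on $M_{sa}$ is required here, since Lemma \ref{l additivity on hermitian gives lienarity on hermitian} (applied with $F=\Phi|_{M_{sa}}$, which is Jordan multiplicative by Corollary \ref{c hermitian and skew symmetric}$(b)$ and sends $0$ to $0$) then upgrades it to a real-linear Jordan $^*$-isomorphism onto $N_{sa}$. The facts about $\Phi$ that will be used are: $\Phi(M_{sa})=N_{sa}$, $\Phi(0)=0$ and Jordan multiplicativity on $M_{sa}$ (Corollary \ref{c hermitian and skew symmetric}); the compression identities $\Phi(pbp)=\Phi(p)\Phi(b)\Phi(p)$ (for a projection $p\in M$ and $b\in M_{sa}\cup iM_{sa}$), the identities $\Phi(a\circ b)=\Phi(a)\circ\Phi(b)$ for $a\in M_{sa}\cup iM_{sa}$ and $b\in iM_{sa}$, the additivity of $\Phi$ on orthogonal families of real multiples of projections, and $\Phi(-x)=-\Phi(x)$, $\Phi(2p-\textbf{1})=2\Phi(p)-\textbf{1}$ (Proposition \ref{p consequences preservation of jordan products for positive in h.4}$(a),(c),(e),(f),(h),(i)$); the preservation and reflection of orthogonality and of the order of projections (Proposition \ref{p basic properties}); and the fact that $\Phi$ maps the centre of $M$ into the centre of $N$ (Lemma \ref{lemma central elements}). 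Recall also that $\Phi^{-1}$ satisfies $(h.4)$ as well.

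First I would reduce to a matrix-unit situation. Exactly as in the proof of Proposition \ref{p consequences preservation of jordan products for positive in h.4}$(j)$, the assumption that $M$ has no abelian direct summand produces a family $\{p_k:k\in I\}$ of mutually orthogonal central projections with $\sum_{k}p_k=\textbf{1}$ and, for each $k$, a system $\{u^k_{ij}:i,j=1,\dots,n_k\}$ of $n_k\times n_k$ matrix units in $Mp_k$ with $n_k\geq 2$. Then $\{u^k_{ii}:k\in I,\ 1\leq i\leq n_k\}$ is an orthogonal family of projections with sum $\textbf{1}$, and every $x\in M_{sa}$ has the finite Peirce decomposition $x=\sum_{k,i} u^k_{ii}\,x\,u^k_{ii}+\sum_{k}\sum_{i<j}\big(u^k_{ii}\,x\,u^k_{jj}+u^k_{jj}\,x\,u^k_{ii}\big)$, with no cross-block corners, since $u^k_{ii}\leq p_k\perp p_{k'}\geq u^{k'}_{jj}$ forces $u^k_{ii}\,M\,u^{k'}_{jj}=0$ whenever $k\neq k'$.

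Next I would run the Hakeda--Sait\^{o} scheme, establishing additivity of $\Phi$ in the following order: (i) on each intra-block off-diagonal Peirce summand $(u^k_{ii}Mu^k_{jj}+u^k_{jj}Mu^k_{ii})_{sa}$; (ii) on each diagonal corner $(u^k_{ii}Mu^k_{ii})_{sa}$; (iii) on finite sums, and then — via the separate weak$^*$-continuity of the product in $N$, as in \cite{Hak86a,HakSaito86} — on arbitrary sums, of self-adjoint elements living in distinct corners of the partition $\{u^k_{ii}\}_{k,i}$; and finally (iv) on all of $M_{sa}$, by decomposing $x,y\in M_{sa}$ into their Peirce components and combining (i)--(iii). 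Each of (i)--(iii) is carried out by the Martindale-type device: for the two elements $a,b$ at hand one forms $t:=\Phi^{-1}\big(\Phi(a)+\Phi(b)-\Phi(a+b)\big)$, which lies in $M_{sa}$ because $N_{sa}=\Phi(M_{sa})$ is a subgroup of $(N,+)$, and one proves $t=0$ by Jordan-multiplying and compressing $t$ against the projections $u^k_{ii}$ and against the \emph{hermitian} elements $u^k_{ij}+u^k_{ji}$ and $i(u^k_{ij}-u^k_{ji})$, using repeatedly the compression identities, the partial additivity already secured, and computations of the type that appear in the proof of Proposition \ref{p consequences preservation of jordan products for positive in h.4}$(j)$.

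The hard part is steps (i)--(iii), i.e.\ the Martindale-type identification of $t$ with $0$. The obstacles are that $\Phi$ is not assumed continuous, so there is no spectral or norm-density shortcut — indeed any tentative identity such as $\Phi(q(x))=q(\Phi(x))$ for real polynomials $q$ is, before additivity is known, merely a reformulation of the conclusion; that the diagonal corners $u^k_{ii}Mu^k_{ii}$ need not be one-dimensional, in contrast to the scalar corners of $M_{n}(\mathbb{C})$, so the off-diagonal Peirce pieces cannot be collapsed cheaply; and that the matrix units $u^k_{ij}$ are not self-adjoint, so they must be accessed only through their hermitian and skew-hermitian parts, which is exactly the role of statements $(e),(f),(i)$ of Proposition \ref{p consequences preservation of jordan products for positive in h.4}. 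This is where the argument tracks \cite[Lemmas 5 to 8]{HakSaito86} most closely.
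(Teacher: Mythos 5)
Your setup---the central decomposition into homogeneous pieces with matrix units, the Peirce decomposition of a hermitian element relative to the family $\{u^k_{ii}\}$, and the toolbox of compression, triple-product and orthogonality identities---matches the paper's proof. But the engine of the argument is missing, and you say so yourself: steps (i)--(iii), the actual identification of $\Phi(a+b)$ with $\Phi(a)+\Phi(b)$ on the Peirce corners, are only named, not carried out. Worse, the Martindale-type device you propose is circular exactly where you need it. Setting $t:=\Phi^{-1}\bigl(\Phi(a)+\Phi(b)-\Phi(a+b)\bigr)$ and compressing by $u^k_{ii}$ gives, via the compression identity, $\Phi(u^k_{ii}tu^k_{ii})=\Phi(u^k_{ii}au^k_{ii})+\Phi(u^k_{ii}bu^k_{ii})-\Phi(u^k_{ii}(a+b)u^k_{ii})$; to conclude that this vanishes you already need additivity of $\Phi$ on the corner $u^k_{ii}Mu^k_{ii}$, which is what you are trying to prove. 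No induction on the corners is available either: for $k\neq k_0$ the algebra $u^k_{ii}Mp_ku^k_{ii}$ is \emph{abelian}, so the hypothesis of the proposition fails there and the whole scheme cannot be restarted inside a corner.

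What actually closes the argument in the paper (following \cite[Lemmas 5 to 8]{HakSaito86}) is a chain you omit entirely. First one proves additivity on elements of the form $\alpha\textbf{1}+\beta p+\gamma q$ for arbitrary projections $p,q$ and real scalars, by comparing the three Peirce components relative to $p$ (equation \eqref{eq L 6 HakSa}); writing $u=2p-\textbf{1}$, $v=2q-\textbf{1}$ this yields $\Phi(\alpha u+\beta v)=\alpha\Phi(u)+\beta\Phi(v)$ for any two \emph{symmetries} $u,v$ (equation \eqref{equ L7 HakSa}). The decisive step is then Hakeda's replacement lemma: after normalizing $x^k=p_kx$, $y^k=p_ky$ by $\gamma_k=\|x_1^k\|+\|y_1^k\|$, there exist symmetries $v^k_i,w^k_i,v^k_{ij},w^k_{ij}$ in $Mp_k$ with $u^k_{ii}x_1^ku^k_{ii}=u^k_{ii}v^k_iu^k_{ii}$ and $\{u^k_{ii},x_1^k,u^k_{jj}\}=\{u^k_{ii},v^k_{ij},u^k_{jj}\}$ (likewise for $y_1^k$). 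This converts every Peirce corner of $x_1^k+y_1^k$ into the corresponding corner of a \emph{sum of two symmetries}, where \eqref{equ L7 HakSa} applies and splits the corner additively; the corners are then reassembled using \eqref{eq L5 HakSa} and the weak$^*$-convergence you mention. Without the two-projection additivity and the symmetry-replacement lemma, your plan stalls precisely at the point you flag as ``the hard part.''
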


\begin{proof} Taking a closer look at the proof of Proposition \ref{p consequences preservation of jordan products for positive in h.4}$(j)$ we deduce that for each family $\{p_{k} : k\in I\}$  of mutually orthogonal projections in $M$ such that $\displaystyle \sum_{k\in I} p_{k}=\textbf{1}$ we have $ \displaystyle\textbf{1}=\Phi\left( \sum_{k\in I} p_{k} \right) = \sum_{k\in I} \Phi(p_k)$. In particular for each $x$ in $M_{sa}$ we have \begin{equation}\label{eq L5 HakSa} \Phi (x) = \sum_{k\in I} \Phi(p_{k}) \Phi (x) \Phi(p_k) + \sum_{k\neq j\in I} \Phi(p_{k}) \Phi (x) \Phi(p_j).
\end{equation}

Let us pick a real $\alpha$ and a projection $p\in M$. Combining Corollary \ref{c linearity on R1} and Corollary \ref{c hermitian and skew symmetric} we get \begin{equation}\label{eq linearity on projections} \Phi (\alpha p) = \Phi (\alpha p \circ \textbf{1}) = \Phi (p)\circ \Phi (\alpha \textbf{1}) = \alpha \Phi (p), \ \ \forall p\in \hbox{Proj} (M).
\end{equation}

Let us consider the triple product $\{.,.,\}$ defined by $\{a,b,c\}=\frac12 ( a b^* c + c b^* a)$. Since for $x\in M_{sa}$ and $p\in \hbox{Proj}(M)$ the identity $(2p\circ x) \circ (\textbf{1}-p) =\{p,x,\textbf{1}-p\}$ holds, we deduce, from Corollary \ref{c hermitian and skew symmetric}, \eqref{eq linearity on projections} and Proposition \ref{p basic properties}$(i)$, that \begin{equation}\label{eq preserves triple products of permitian and projections} \Phi \{p,x,\textbf{1}-p \} = \{\Phi(p), \Phi(x),\textbf{1}-\Phi(p)\},
\end{equation} $$ \hbox{ and, } \Phi (p x p ) = \Phi(p) \Phi(x) \Phi(p),$$ for all $x\in M_{sa},$ and $p\in \hbox{Proj} (M).$\smallskip

We claim that \begin{equation}\label{eq L 6 HakSa} \Phi(\alpha \textbf{1} + \beta p +\gamma q ) = \Phi(\alpha \textbf{1}) + \Phi(\beta p) +\Phi(\gamma q )= \alpha \Phi( \textbf{1}) +\beta \Phi( p) +\gamma \Phi( q ),
\end{equation} for all $\alpha,\beta, \gamma \in \mathbb{R}$, and all $p,q\in \hbox{Proj} (M).$ Indeed, set $x= \alpha \textbf{1} + \beta p +\gamma q$. By \eqref{eq preserves triple products of permitian and projections} we have $$\Phi (p) \Phi(x) \Phi(p) = \Phi (p x p) = \Phi \Big(p \Big((\alpha + \beta)  \textbf{1} +\gamma q\Big) p\Big) $$ $$= \Phi \Big(p \Big((\alpha + \beta +\gamma) q + (\alpha+\beta) (  \textbf{1}-q) \Big) p\Big)$$ $$= \Phi (p)  \Phi \Big((\alpha + \beta +\gamma) q + (\alpha+\beta) (  \textbf{1}-q) \Big) \Phi(p) =\hbox{(by Proposition \ref{p consequences preservation of jordan products for positive in h.4}$(a)$)}$$ $$= \Phi (p)  \Big( \Phi ((\alpha + \beta +\gamma) q)  + \Phi ((\alpha+\beta) (  \textbf{1}-q)) \Big) \Phi(p) =\hbox{(by \eqref{eq linearity on projections})}$$ $$ = \Phi (p)  \Big( (\alpha + \beta +\gamma) \Phi ( q)  + (\alpha+\beta) \Phi (\textbf{1}-q) \Big) \Phi(p)$$
$$= \Phi (p)  \Big( \gamma \Phi ( q)  + (\alpha+\beta) \Phi (\textbf{1}) \Big) \Phi(p) =\hbox{(by \eqref{eq linearity on projections} and \eqref{eq preserves triple products of permitian and projections})} $$ $$= \Phi (p)  \Big( \Phi(\alpha \textbf{1}) + \Phi(\beta p) +\Phi(\gamma q ) \Big) \Phi(p) ,$$
$$\Phi(\textbf{1}-p) \Phi (x) \Phi (\textbf{1}-p) = \Phi ((\textbf{1}-p) x  (\textbf{1}-p)) =\Phi ((\textbf{1}-p) (\alpha \textbf{1}+\gamma q)  (\textbf{1}-p)) $$ $$= \Phi(\textbf{1}-p) \Phi (\alpha \textbf{1}+\gamma q) \Phi (\textbf{1}-p) = \Phi(\textbf{1}-p) \Phi ((\alpha +\gamma) q + \alpha (\textbf{1}-q)) \Phi (\textbf{1}-p)$$ $$= \Phi(\textbf{1}-p) \left((\alpha +\gamma) \Phi (q) + \alpha \Phi(\textbf{1}-q)\right) \Phi (\textbf{1}-p)$$ $$= \Phi(\textbf{1}-p) \left( \alpha \textbf{1}  +\gamma \Phi (q) \right) \Phi (\textbf{1}-p) = \Phi(\textbf{1}-p) \Big( \Phi(\alpha \textbf{1}) + \Phi(\beta p) +\Phi(\gamma q ) \Big) \Phi (\textbf{1}-p),$$
and by \eqref{eq preserves triple products of permitian and projections} and Proposition \ref{p consequences preservation of jordan products for positive in h.4}$(a)$ and $(h)$
$$\{\Phi(\textbf{1}-p), \Phi (x), \Phi (p)\} = \Phi \{\textbf{1}-p, x, p\} = \Phi \{\textbf{1}-p, \gamma q, p\} $$ $$ = \{\Phi(\textbf{1}-p),  \Phi( \gamma q), \Phi(p)\} = \{\textbf{1} -\Phi(p),  \Phi( \gamma q), \Phi(p)\}$$ $$ = \{\textbf{1} -\Phi(p), \alpha \Phi( \textbf{1}) +\beta  \Phi(p) +\gamma \Phi( q ), \Phi(p)\} $$ $$= \{\Phi(\textbf{1}-p), \Phi(\alpha \textbf{1}) + \Phi(\beta p) +\Phi(\gamma q ), \Phi(p)\}.$$

The last three identities together prove that $$\Phi(x) =  \Phi (p) \Phi(x) \Phi(p)  + \Phi (\textbf{1}-p) \Phi(x) \Phi(\textbf{1}-p) + 2 \{\Phi(\textbf{1}-p), \Phi (x), \Phi (p)\} $$ $$= \Phi (p)  \Big( \Phi(\alpha \textbf{1}) + \Phi(\beta p) +\Phi(\gamma q ) \Big) \Phi(p) $$ $$+  \Phi(\textbf{1}-p) \Big( \Phi(\alpha \textbf{1}) + \Phi(\beta p) +\Phi(\gamma q ) \Big) \Phi (\textbf{1}-p) $$ $$+ 2 \{\Phi(\textbf{1}-p),  \Phi(\alpha \textbf{1}) + \Phi(\beta p) +\Phi(\gamma q ), \Phi(p)\} $$ $$=  \Phi(\alpha \textbf{1}) + \Phi(\beta p) +\Phi(\gamma q ) = \alpha \Phi( \textbf{1}) +\beta \Phi( p) +\gamma \Phi( q ) ,$$ which finishes the proof of \eqref{eq L 6 HakSa}.\smallskip

We shall next show that \begin{equation}\label{equ L7 HakSa} \Phi (\alpha u + \beta v ) = \Phi (\alpha u) + \Phi(\beta v) = \alpha \Phi  (u) + \beta \Phi  (v),
\end{equation} for all $\alpha,\beta\in \mathbb{R}$, $u,v$ symmetries in $M$ (i.e. $u,v\in M_{sa}$ with $u^2= v^2 =\textbf{1}$). Namely, by spectral theory, the elements $p = \frac12 (\textbf{1}+u)$ and $q = \frac12 (\textbf{1}+v)$ are projections in $M$ and we can write $$ \Phi (\alpha u + \beta v )= \Phi (2 \alpha p + 2 \beta q - (\alpha+\beta)\textbf{1}) =\hbox{(by \eqref{eq L 6 HakSa})}$$ $$= 2 \alpha \Phi (p) + 2 \beta \Phi (q) - (\alpha+\beta) \textbf{1} = \alpha (2 \Phi (p) -\textbf{1}) + \beta (2 \Phi (q) -\textbf{1}) $$ $$=\hbox{(by Proposition \ref{p consequences preservation of jordan products for positive in h.4}$(h)$)} = \alpha \Phi  (2 p -\textbf{1}) + \beta \Phi  (2 q -\textbf{1}) = \alpha \Phi  (u) + \beta \Phi  (v)$$ $$ = \alpha \Phi  ( p -(\textbf{1}-p)) + \beta \Phi  ( q -(\textbf{1}-q)) =\hbox{(by Proposition \ref{p consequences preservation of jordan products for positive in h.4}$(a)$)}$$ $$ = \Phi  (  \alpha p -  \alpha(\textbf{1}-p)) + \Phi  ( \beta q - \beta(\textbf{1}-q)) = \Phi (\alpha u) + \Phi(\beta v).$$\smallskip

Finally, let us pick $x, y\in M_{sa}$. Arguing as in the proof of Proposition \ref{p consequences preservation of jordan products for positive in h.4}$(j)$, since $M$ contains no abelian direct summand, we can find a family $\{p_{k} : k\in I\}$  of central orthogonal projections in $M$ such that $\displaystyle \sum_{k\in I} p_{k}=\textbf{1}$, there exists $k_0\in I$ such that  $M p_{k_0}$ has no finite type I direct summand, and $M p_{k}$ is homogeneous of type $I_{n_{k}}$ $(\mathbb{N}\ni n_{k}\geqq 2)$  for all  $k\neq k_0$. For each $k\in I$, let $\{u^k_{ij}: i,j=1,\ldots,n_k \}$ be a \emph{system of $n \times n$ matrix units} in $M p_k$, where $n_k$ is an integer greater than or equal to $2$.\smallskip

Let us set $x^k = p_k x,$ $y^k = y p_k$, $x_1^k = \gamma_k^{-1} x^k$ and $y_1^k = \gamma_k^{-1} y^k$, where $\gamma_k = \|x_1^k\|+\|y_1^k\|$. It is explicitly shown in the proof of \cite[Lemma 8]{HakSaito86} (see also \cite[Lemma 1]{Hak86c} and \cite[Lemma 3.5]{Hak86b}) that there exist symmetries $v^k_i$, $w^k_i,$ $v^k_{ij}$ and $w^k_{ij}$ in $M p_k$ satisfying \begin{equation}\label{eq normalizzed elements replaced by symmetries}  u_{ii}^k x_1^k u^k_{ii} = u_{ii}^k v_i^k u^k_{ii}, \  \{u^k_{ii}, x_1^k, u_{jj}^k\} = \{u^k_{ii}, v^k_{ij}, u_{jj}^k\},\end{equation}
$$ u_{ii}^k y_1^k u^k_{ii} = u_{ii}^k w_i^k u^k_{ii}, \hbox{ and } \{u^k_{ii}, y_1^k, u_{jj}^k\} = \{u^k_{ii}, w^k_{ij}, u_{jj}^k\}, \ \forall i\neq j.$$ Applying the above properties we get $$\Phi (x+y ) = \sum_{k\in I} \Phi (x + y ) \Phi(p_k)=  \sum_{k\in I} \Phi(p_k) \Phi (x + y ) \Phi(p_k)  $$ $$=  \sum_{k\in I} \sum_{i,j=1}^{n_k} \Phi(u_{ii}^k) \Phi (x + y ) \Phi(u_{jj}^k) $$ $$=  \sum_{k\in I} \left( \sum_{i=1}^{n_k} \Phi(u_{ii}^k) \Phi (x + y ) \Phi(u_{ii}^k)  + 2 \sum_{i<j}^{n_k} \{\Phi(u_{ii}^k), \Phi (x + y ), \Phi(u_{jj}^k)\} \right)$$ $$=\hbox{(by \eqref{eq preserves triple products of permitian and projections})}=  \sum_{k\in I} \left( \sum_{i=1}^{n_k} \Phi(u_{ii}^k (x^k + y^k ) u_{ii}^k)  + 2 \sum_{i<j}^{n_k} \Phi(\{u_{ii}^k, x^k + y^k , u_{jj}^k\}) \right)$$ $$=  \sum_{k\in I} \Phi(\gamma_k p_k) \left( \sum_{i=1}^{n_k} \Phi(u_{ii}^k (x^k_1 + y^k_1 ) u_{ii}^k)  + 2 \sum_{i<j}^{n_k} \Phi(\{u_{ii}^k, x^k_1 + y^k_1 , u_{jj}^k\}) \right)$$
by \eqref{eq normalizzed elements replaced by symmetries}
$$=  \sum_{k\in I} \Phi(\gamma_k p_k) \left( \sum_{i=1}^{n_k} \Phi(u_{ii}^k (v_i^k + w_i^k ) u_{ii}^k)  + 2 \sum_{i<j}^{n_k} \Phi(\{u_{ii}^k, v^k_{ij} + w^k_{ij} , u_{jj}^k\}) \right)$$
$$=  \sum_{k\in I} \Phi(\gamma_k p_k) \left( \sum_{i=1}^{n_k} \Phi(u_{ii}^k) \Phi (v_i^k + w_i^k ) \Phi(u_{ii}^k)  \right. $$ $$\left.+ 2 \sum_{i<j}^{n_k} \{\Phi(u_{ii}^k), \Phi(v^k_{ij} + w^k_{ij}) ,\Phi( u_{jj}^k) \} \right) = \hbox{(by \eqref{equ L7 HakSa})}$$
$$ =  \sum_{k\in I} \Phi(\gamma_k p_k) \left( \sum_{i=1}^{n_k} \Phi(u_{ii}^k) \Phi (v_i^k ) \Phi(u_{ii}^k) + \sum_{i=1}^{n_k} \Phi(u_{ii}^k) \Phi (w_i^k ) \Phi(u_{ii}^k) \right.$$ $$\left.+ 2 \sum_{i<j}^{n_k} \{\Phi(u_{ii}^k), \Phi(v^k_{ij}) ,\Phi( u_{jj}^k) \} + 2 \sum_{i<j}^{n_k} \{\Phi(u_{ii}^k), \Phi( w^k_{ij}) ,\Phi( u_{jj}^k) \} \right)$$
$$ = \sum_{k\in I} \Phi(\gamma_k p_k) \left( \sum_{i=1}^{n_k} \Phi(u_{ii}^k v_i^k u_{ii}^k) + 2 \sum_{i<j}^{n_k} \Phi(\{ u_{ii}^k, v^k_{ij} , u_{jj}^k \}) \right. $$ $$ \left.+ \sum_{i=1}^{n_k} \Phi(u_{ii}^k w_i^k u_{ii}^k) + 2 \sum_{i<j}^{n_k} \Phi(\{ u_{ii}^k, w^k_{ij} , u_{jj}^k \}) \right)$$ $$ = \sum_{k\in I} \Phi(\gamma_k p_k) \left( \sum_{i=1}^{n_k} \Phi(u_{ii}^k x_1^k u_{ii}^k) + 2 \sum_{i<j}^{n_k} \Phi(\{ u_{ii}^k, x^k_{1} , u_{jj}^k \}) \right. $$ $$ \left.+ \sum_{i=1}^{n_k} \Phi(u_{ii}^k y_1^k u_{ii}^k) + 2 \sum_{i<j}^{n_k} \Phi(\{ u_{ii}^k, y^k_{1} , u_{jj}^k \}) \right)$$
$$ = \sum_{k\in I} \left( \sum_{i=1}^{n_k} \Phi(u_{ii}^k x u_{ii}^k) + 2 \sum_{i<j}^{n_k} \Phi(\{ u_{ii}^k, x , u_{jj}^k \}) \right. $$ $$ \left.+ \sum_{i=1}^{n_k} \Phi(u_{ii}^k y u_{ii}^k) + 2 \sum_{i<j}^{n_k} \Phi(\{ u_{ii}^k, y , u_{jj}^k \}) \right)$$
$$ = \sum_{k\in I} \left( \sum_{i=1}^{n_k} \Phi(u_{ii}^k) \Phi( x )  \Phi(u_{ii}^k) + 2 \sum_{i<j}^{n_k} \{ \Phi(u_{ii}^k), \Phi(x) , \Phi(u_{jj}^k) \}) \right. $$ $$ \left.+ \sum_{i=1}^{n_k} \Phi(u_{ii}^k) \Phi(y) \Phi(u_{ii}^k) + 2 \sum_{i<j}^{n_k} \{ \Phi(u_{ii}^k), \Phi(y) , \Phi(u_{jj}^k) \}) \right)$$
$$= \sum_{k\in I} \left( \sum_{i,j=1}^{n_k} \Phi(u_{ii}^k) \Phi (x  ) \Phi(u_{jj}^k) + \sum_{i,j=1}^{n_k} \Phi(u_{ii}^k) \Phi (y ) \Phi(u_{jj}^k) \right) $$
$$=   \sum_{k\in I} \Phi(p_k) \Phi (x  ) \Phi(p_k) + \sum_{k\in I} \Phi(p_k) \Phi ( y ) \Phi(p_k) = \Phi (x) + \Phi(y).$$
\end{proof}

We consider next imaginary scalars.

\begin{lemma}\label{l imaginary numbers} Let $\Phi:M\to N$ be a bijective map between von Neumann algebras satisfying hypothesis $(h.4)$ in Problem \ref{problems}. Then there exists a central projection $p_c$ in $M$ satisfying the following properties:\begin{enumerate}[$(a)$]\item $\Phi (i p_c) = i \Phi (p_c)$;
\item $\Phi (i x) = i \Phi (x)$ for all $x\in p_c M_{sa}$;
\item $\Phi (i (\textbf{1}-p_c)) = - i \Phi (\textbf{1}-p_c)$;
\item $\Phi (i x) = - i \Phi (x)$ for all $x\in (\textbf{1}-p_c) M_{sa}$.
\end{enumerate}
\end{lemma}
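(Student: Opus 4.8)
The plan is to linearize the behaviour of $\Phi$ on the skew-hermitian part by means of an auxiliary map, and then to extract $p_c$ from a distinguished central symmetry of $N$. First I would use Proposition \ref{p consequences preservation of jordan products for positive in h.4}$(d)$ to note that $\Phi(iM_{sa})\subseteq iN_{sa}$, so that $\Psi:M_{sa}\to N_{sa}$, $\Psi(x):=-i\,\Phi(ix)$ (equivalently $\Phi(ix)=i\Psi(x)$), is well defined. From the elementary identities $x\circ(iy)=i(x\circ y)$ and $(ix)\circ(iy)=-(x\circ y)$, combined with the Jordan multiplicativity statements in Proposition \ref{p consequences preservation of jordan products for positive in h.4}$(e)$--$(f)$, with Corollary \ref{c hermitian and skew symmetric}$(b)$ and with Proposition \ref{p consequences preservation of jordan products for positive in h.4}$(c)$, I would deduce, for all $x,y\in M_{sa}$,
$$\Psi(x\circ y)=\Phi(x)\circ\Psi(y)=\Psi(x)\circ\Phi(y)\quad\hbox{and}\quad \Psi(x)\circ\Psi(y)=\Phi(x)\circ\Phi(y).$$

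Next I would set $w:=\Psi(\textbf{1})=-i\,\Phi(i\textbf{1})\in N_{sa}$. Then $\Psi(x)=\Psi(x\circ\textbf{1})=\Phi(x)\circ w$, and $w\circ w=\Psi(\textbf{1})\circ\Psi(\textbf{1})=\Phi(\textbf{1})\circ\Phi(\textbf{1})=\textbf{1}$ by Proposition \ref{p basic properties}$(e)$, so $w$ is a symmetry. The hard part will be showing that $w$ is \emph{central} in $N$: substituting $\Psi(z)=\Phi(z)\circ w$ into the identity $\Psi(x\circ y)=\Phi(x)\circ\Psi(y)$ and using Corollary \ref{c hermitian and skew symmetric}$(b)$ gives $(\Phi(x)\circ\Phi(y))\circ w=\Phi(x)\circ(\Phi(y)\circ w)$; since $\Phi(M_{sa})=N_{sa}$ by Corollary \ref{c hermitian and skew symmetric for h3 and h4}, this means $(a\circ b)\circ w=a\circ(b\circ w)$ for all $a,b\in N_{sa}$, i.e. $w$ operator commutes with every element of $N_{sa}$, whence $w\in Z(N)$ by \cite[Proposition 1]{Top}.

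Finally, spectral theory gives a central projection $q_c\in N$ with $w=2q_c-\textbf{1}$; I would put $p_c:=\Phi^{-1}(q_c)$, which is a central projection of $M$ with $\Phi(p_c)=q_c$ and $\Phi(\textbf{1}-p_c)=\textbf{1}-q_c$, because $\Phi^{-1}$ also satisfies $(h.4)$ and so Proposition \ref{p basic properties}$(c)$, $(i)$ and Lemma \ref{lemma central elements} apply to it. Then for $x\in p_cM_{sa}$ one has $x=x\circ p_c$, so $\Phi(x)=\Phi(x)\circ q_c=q_c\Phi(x)$ by Corollary \ref{c hermitian and skew symmetric}$(b)$ and centrality of $q_c$; hence $\Phi(x)w=\Phi(x)(2q_c-\textbf{1})=\Phi(x)$ and $\Phi(ix)=i\Psi(x)=i(\Phi(x)\circ w)=i\Phi(x)$, which is $(b)$, with $(a)$ the special case $x=p_c$. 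Symmetrically, for $x\in(\textbf{1}-p_c)M_{sa}$ one gets $\Phi(x)=(\textbf{1}-q_c)\Phi(x)$, so $q_c\Phi(x)=0$, $\Phi(x)w=-\Phi(x)$, and $\Phi(ix)=-i\Phi(x)$, giving $(d)$ and, for $x=\textbf{1}-p_c$, $(c)$. Apart from the centrality of $w$, everything is bookkeeping of the multiplicativity properties already in hand, and the case $\lambda=0$ needs no separate treatment since all the quoted statements already include it through Corollary \ref{c Hakeda Saito 86 for (4) or (3)}.
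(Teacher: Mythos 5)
Your proof is correct. At bottom it rests on the same construction as the paper's: both arguments identify $p_c$ as the preimage under $\Phi$ of the spectral projection of the skew-hermitian unitary $\Phi(i\textbf{1})$ (your $w=-i\,\Phi(i\textbf{1})$ is exactly the paper's $2q-\textbf{1}$), and both then propagate the sign of $i$ through Jordan products with $\Phi(p_c)$ and $\Phi(\textbf{1}-p_c)$. The genuine difference is how centrality is obtained. The paper invokes Lemma \ref{lemma central elements} directly on the element $i\textbf{1}$; as that lemma is stated only for hermitian elements, this strictly speaking needs a small extension to the skew-hermitian case (which Proposition \ref{p consequences preservation of jordan products for positive in h.4}$(e)$ supplies). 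You instead prove that $w$ operator commutes with all of $N_{sa}$ from the associativity identity $(a\circ b)\circ w=a\circ(b\circ w)$, which you extract from $\Psi(x\circ y)=\Phi(x)\circ\Psi(y)$ together with the surjectivity of $\Phi|_{M_{sa}}$ onto $N_{sa}$, and then quote \cite[Proposition 1]{Top}. This is self-contained and, if anything, tightens the paper's argument; the auxiliary map $\Psi$ and the uniform formula $\Phi(ix)=i\,\Phi(x)w$ are otherwise just efficient bookkeeping replacing the paper's case-by-case verification of $(a)$--$(d)$.
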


\begin{proof} We begin with the case in which $x=1$. By Proposition \ref{p consequences preservation of jordan products for positive in h.4}$(d)$ we know that $\Phi (i \textbf{1})\in i N_{sa}$. Applying 
Proposition \ref{p basic properties}$(b)$ we know that $\Phi (i \textbf{1}) \circ \Phi (i \textbf{1})^* = \Phi (\textbf{1}) = \textbf{1}$.
Standard arguments in spectral theory show that $\Phi (i \textbf{1}) = i q - i (\textbf{1}-q)$, where $q$ is a projection in $N$.

Let us take a projection $p_c$ in $M$ satisfying $\Phi (p_c) = q$ (see Proposition \ref{p basic properties}$(c)$). An application of Proposition \ref{p consequences preservation of jordan products for positive in h.4}$(c)$, $(e)$ and $(h)$ gives $$i q = \Phi (p_c) \circ \Phi(i \textbf{1}) = \Phi (i p_c),\ \hbox{ and } -i (\textbf{1}-q) = \Phi (\textbf{1}-p_c) \circ \Phi(i \textbf{1}) = \Phi (i (\textbf{1}-p_c)).$$

Therefore $$\Phi (i \textbf{1})= \Phi (i p_c + i (\textbf{1}-p_c)) =  \Phi (i p_c + i (\textbf{1}-p_c)) \circ \left( \Phi(p_c) + \Phi (\textbf{1}-p_c)\right) $$ $$= \Phi (i p_c + i (\textbf{1}-p_c)) \circ \Phi(p_c) + \Phi (i p_c + i (\textbf{1}-p_c)) \circ \Phi (\textbf{1}-p_c)  $$ $$= \Phi (i p_c ) + \Phi ( i (\textbf{1}-p_c)) = i q - i (\textbf{1}-q) = i (2 q - \textbf{1}).$$

Lemma \ref{lemma central elements} assures that $\Phi (i \textbf{1}) = i q - i (\textbf{1}-q)$ is a central element in $N$. Clearly $i \textbf{1} = i q + i (\textbf{1}-q)$ also is a central element in $N$. Therefore $q$ is a central projection in $N$. A new application of Lemma \ref{lemma central elements} to $\Phi$ proves that $p_c$ is a central projection in $M$. We have already proved statements $(a)$ and $(c)$.\smallskip

$(b)$ Let us take $x\in p_c M_{sa}$. By $(a),$ Corollary \ref{c hermitian and skew symmetric}, and Proposition \ref{p consequences preservation of jordan products for positive in h.4}$(e)$ we get $$\Phi (i x) = \Phi (i p_c \circ x) = \Phi (i p_c) \circ \Phi (x) = i \Phi ( p_c) \circ \Phi (x)= i \Phi ( p_c \circ x) =i \Phi(x).$$

Statement $(d)$ can be similarly obtained via $(b)$, Corollary \ref{c hermitian and skew symmetric}, and Proposition \ref{p consequences preservation of jordan products for positive in h.4}$(e)$.
\end{proof}

We are in position now to establish our main conclusion about maps satisfying hypothesis $(h.4)$ in Problem \ref{problems}.

\begin{theorem}\label{t linearity on hermitian for h4} Let $\Phi:M\to N$ be a bijective map between von Neumann algebras satisfying hypothesis $(h.4)$ in Problem \ref{problems}. Suppose $M$ is a von Neumann algebra which has no abelian direct summand. Then the restriction $\Phi|_{M_{sa}} : M_{sa}\to N_{sa}$ is a Jordan isomorphism.

If we also assume that $\Phi (x +i y ) = \Phi (x) +\Phi (i y)$ for all $x,y\in M_{sa}$, then there exists a central projection $p_c$ in $M$ such that $\Phi|_{p_c M}$ is a complex linear Jordan $^*$-isomorphism and $\Phi|_{(\textbf{1}-p_c) M}$ is a conjugate linear Jordan $^*$-isomorphism.
\end{theorem}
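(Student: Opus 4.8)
The first assertion will be obtained by assembling the results already proved. Put $F := \Phi|_{M_{sa}}\colon M_{sa}\to N_{sa}$; this is well defined and bijective by Corollary \ref{c hermitian and skew symmetric}$(a)$ together with the bijectivity of $\Phi$. Then $F(0)=0$ by Proposition \ref{p basic properties}$(a)$, $F$ is additive by Proposition \ref{p additivity on hermitian} (this is where the absence of an abelian direct summand of $M$ is used), and $F$ is Jordan multiplicative by Corollary \ref{c hermitian and skew symmetric}$(b)$. Lemma \ref{l additivity on hermitian gives lienarity on hermitian} then gives that $F$ is real linear, so $\Phi|_{M_{sa}}$ is a Jordan isomorphism of $M_{sa}$ onto $N_{sa}$.

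For the second assertion, assume in addition that $\Phi(x+iy)=\Phi(x)+\Phi(iy)$ for all $x,y\in M_{sa}$. I would first fix the central projection $p_c\in M$ furnished by Lemma \ref{l imaginary numbers} and set $q:=\Phi(p_c)$; as noted in the proof of that lemma, $q$ is a central projection in $N$. Since $\Phi|_{M_{sa}}$ is a real linear, Jordan multiplicative bijection and $p_c,q$ are central, the equivalence $x\in p_c M_{sa}\Leftrightarrow p_c\circ x=x$, combined with $\Phi(p_c\circ x)=\Phi(p_c)\circ\Phi(x)=q\circ\Phi(x)$, yields $\Phi(p_c M_{sa})=qN_{sa}$ and $\Phi((\textbf{1}-p_c)M_{sa})=(\textbf{1}-q)N_{sa}$; hence $\Phi$ restricts to Jordan isomorphisms $p_c M_{sa}\to qN_{sa}$ and $(\textbf{1}-p_c)M_{sa}\to(\textbf{1}-q)N_{sa}$.

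Next I would analyse the corner $p_c M$. Any $z\in p_c M$ decomposes as $z=x+iy$ with $x=\tfrac12(z+z^*)$ and $y=\tfrac1{2i}(z-z^*)$ lying in $p_c M_{sa}$ by centrality of $p_c$; since $x$ and $y$ are hermitian elements of $M$, the extra hypothesis gives $\Phi(z)=\Phi(x)+\Phi(iy)$, and Lemma \ref{l imaginary numbers}$(b)$ gives $\Phi(iy)=i\Phi(y)$, so that $\Phi(z)=\Phi(x)+i\Phi(y)\in qN$. From this explicit formula and the real linearity of $\Phi|_{p_c M_{sa}}$ it is routine to check that $\Phi|_{p_c M}$ is complex linear (expand $\Phi\big((\alpha+i\beta)(x+iy)\big)$ for $\alpha,\beta\in\mathbb{R}$), is $^*$-preserving (because $\Phi(x),\Phi(y)\in N_{sa}$), and is a bijection onto $qN$ (using that $\Phi|_{p_c M_{sa}}$ maps onto $qN_{sa}$). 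For Jordan multiplicativity I would write $z_j=x_j+iy_j$, expand $z_1\circ z_2$ into its hermitian and skew parts $x_1\circ x_2-y_1\circ y_2$ and $x_1\circ y_2+y_1\circ x_2$ (both in $p_c M_{sa}$), apply complex linearity of $\Phi|_{p_c M}$ and the Jordan multiplicativity of $\Phi$ on $M_{sa}$ from Corollary \ref{c hermitian and skew symmetric}$(b)$, and recombine to obtain $\Phi(z_1\circ z_2)=\Phi(z_1)\circ\Phi(z_2)$. Thus $\Phi|_{p_c M}$ is a complex linear Jordan $^*$-isomorphism onto $qN$. Running the same argument on $(\textbf{1}-p_c)M$, but invoking Lemma \ref{l imaginary numbers}$(d)$ (so that $\Phi(iy)=-i\Phi(y)$ and $\Phi(z)=\Phi(x)-i\Phi(y)$), produces a conjugate linear Jordan $^*$-isomorphism of $(\textbf{1}-p_c)M$ onto $(\textbf{1}-q)N$.

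Since the substantive work — additivity on $M_{sa}$, linearity from additivity, and the construction of $p_c$ — is already in place, the rest is bookkeeping. The step requiring the most care is the transport of the central decomposition, i.e. the identity $\Phi(p_c M_{sa})=qN_{sa}$, because the whole argument for the corners depends on being able to write $\Phi(z)=\Phi(x)\pm i\Phi(y)$ with both summands in the appropriate self-adjoint part; once that is secured, everything else is a direct expansion into real and imaginary parts.
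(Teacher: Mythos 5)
Your proposal is correct and follows essentially the same route as the paper: the first assertion is assembled from Proposition \ref{p additivity on hermitian}, Corollary \ref{c hermitian and skew symmetric} and Lemma \ref{l additivity on hermitian gives lienarity on hermitian}, and the second uses the central projection of Lemma \ref{l imaginary numbers} together with the decomposition $z=x+iy$ and the extra additivity hypothesis to get $\Phi(z)=\Phi(x)\pm i\Phi(y)$ on the two corners. The only difference is that you spell out the verification of $^*$-preservation and Jordan multiplicativity on the full corners $p_c M$ and $(\textbf{1}-p_c)M$, which the paper leaves implicit after establishing additivity and (conjugate) homogeneity.
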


\begin{proof} Proposition \ref{p additivity on hermitian} and Corollary \ref{c hermitian and skew symmetric} prove that $\Phi|_{M_{sa}} : M_{sa}\to N_{sa}$ is an additive bijection and a Jordan multiplicative mapping. Lemma \ref{l additivity on hermitian gives lienarity on hermitian} assures that $\Phi|_{M_{sa}}$ is linear.\smallskip

Let $p_c$ be the  central projection in $M$ given by Lemma \ref{l imaginary numbers}. The just quoted result shows that $$\Phi (i x) = i \Phi (x) \hbox{ for all $x\in p_c M_{sa}$}$$ and
$$\Phi (i x) = - i \Phi (x) \hbox{ for all $x\in (\textbf{1}-p_c) M_{sa}$.}$$

Elements in $p_c M$ (respectively, $(\textbf{1}-p_c) M$) can be written in the form $z = z_1 + i z_2 $ and $y = y_1 + i y_2 $ with $z_1,z_2,y_1,y_2\in p_c M_{sa}$ (respectively, with $z_1,z_2,y_1,y_2\in (\textbf{1}-p_c) M_{sa}$). It follows from the hypothesis and the previous statements that $$\Phi (z + y ) = \Phi ( z_1 + i z_2 + y_1 + i y_2 ) = \Phi (z_1 + i z_2 + y_1 + i y_2 ) $$ $$= \Phi ( z_1 + y_1  ) + \Phi ( i z_2 + i y_2 ) = \Phi ( z_1) + \Phi ( y_1  ) + i \Phi (z_2) + i \Phi (y_2 )= \Phi(z) + \Phi (y)$$ (respectively, $$\Phi (z + y ) = \Phi ( z_1 + i z_2 + y_1 + i y_2 ) = \Phi (z_1 + i z_2 + y_1 + i y_2 ) $$ $$= \Phi ( z_1 + y_1  ) + \Phi ( i z_2 + i y_2 ) = \Phi ( z_1) + \Phi ( y_1  ) - i \Phi (z_2) - i \Phi (y_2 )= \Phi(z)^* + \Phi (y)^*).$$ Given $\alpha\in \mathbb{C}$, it follows from the above facts that $$\Phi (\alpha z) = \Phi (\alpha z_1 + \alpha i z_2) =   \Phi ((\Re\hbox{e} (\alpha) + i \Im\hbox{m}(\alpha)) z_1 + (i \Re\hbox{e} (\alpha) - \Im\hbox{m}(\alpha)) z_2)$$ $$=   \Phi ( \Re\hbox{e} (\alpha)  z_1 - \Im\hbox{m}(\alpha) z_2) +   \Phi ( i \Im\hbox{m}(\alpha) z_1 + i \Re\hbox{e} (\alpha) z_2)$$ $$=    \Re\hbox{e} (\alpha)  \Phi (z_1)  - \Im\hbox{m}(\alpha) \Phi ( z_2) +  i \Im\hbox{m}(\alpha) \Phi (z_1) + i \Re\hbox{e} (\alpha) \Phi (z_2) = \alpha \Phi (z),$$ for all $z\in p_c M$, and $$\Phi (\alpha z) = \Phi (\alpha z_1 + \alpha i z_2) =   \Phi ((\Re\hbox{e} (\alpha) + i \Im\hbox{m}(\alpha)) z_1 + (i \Re\hbox{e} (\alpha) - \Im\hbox{m}(\alpha)) z_2)$$ $$=   \Phi ( \Re\hbox{e} (\alpha)  z_1 - \Im\hbox{m}(\alpha) z_2) +   \Phi ( i \Im\hbox{m}(\alpha) z_1 + i \Re\hbox{e} (\alpha) z_2)$$ $$=    \Re\hbox{e} (\alpha)  \Phi (z_1)  - \Im\hbox{m}(\alpha) \Phi ( z_2) -  i \Im\hbox{m}(\alpha) \Phi (z_1) - i \Re\hbox{e} (\alpha) \Phi (z_2) = \overline{\alpha} \Phi (z),$$ for all $z\in (\textbf{1}-p_c) M$, which concludes the proof.
\end{proof}

\section{Maps commuting with the C$^*$-product up to a $\lambda$-Aluthge transform}\label{sec:4}

In this section we shall consider bijective maps satisfying hypothesis $(h.3)$ in Problem \ref{problems}. Contrary to the case in previous section, we shall restrict our study to bijections $\Phi: \mathcal{B}(H)\to \mathcal{B}(K)$ satisfying $(h.3)$, that is, bijections for which there exists $\lambda\in [0,1]$ such that $$\Phi(\Delta_{\lambda}(a b^*))=\Delta_{\lambda}(\Phi(a) \Phi(b)^*), \hbox{ for all } a,\;b\in \mathcal{B}(H),$$ where $H$ and $K$ are complex Hilbert spaces. As we commented at the introduction, the case $\lambda=0$ was solved by J. Hakeda in \cite{Hak86a} (see Theorem \ref{thm Hakeda 86}).\smallskip

We shall extend and adapt some of the arguments given by F. Chabbabi in \cite{Chabbabi2017product} and F. Chabbabi and M. Mbekhta in \cite{ChabbabiMbekhta2017Jordan}.
We begin by establishing an analogous of \cite[Proposition 2.2]{Chabbabi2017product} and \cite[Proposition 3.2]{ChabbabiMbekhta2017Jordan} in our setting.

\begin{proposition}\label{p def function h} Let $\Phi:\mathcal{B}(H)\to \mathcal{B}(K)$ be a bijective map satisfying hypothesis $(h.3)$ in Problem \ref{problems} for a fixed  $\lambda$ in $(0,1]$.
Then there exists a bijective mapping $h:\CC \to \CC$ satisfying:
\begin{enumerate}[$(a)$]
\item $h(0)=0$ and $h(1)=1;$
\item $\Phi(\alpha \textbf{1})=h(\alpha) \textbf{1},$ for all $\alpha \in \CC;$
\item $h(\alpha \overline{\beta})=h(\alpha) \overline{h(\beta)},$ for all $\alpha,\;\beta \in \CC.$ In particular, $h(\overline{\alpha})=\overline{h(\alpha)},$ for all $\alpha \in \CC;$
\item $h(-\alpha)=-h(\alpha),$  for all $\alpha \in \CC;$
\item $\Phi(\alpha a)=h(\alpha)\Phi(a),$ for every hermitian operator $a\in\mathcal{B}(H)$ and every $\alpha\in\CC.$
\end{enumerate}
\end{proposition}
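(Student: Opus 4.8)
The plan is to isolate two elementary facts about $\Delta_\lambda$ on $\mathcal{B}(H)$, then pin down $\Phi$ on the scalar multiples of $\textbf{1}$, and finally read off $(a)$--$(e)$ by feeding suitable pairs $(a,b)$ into hypothesis $(h.3)$. The two facts are: first, $\Delta_\lambda(\mu c)=\mu\,\Delta_\lambda(c)$ for every $c\in\mathcal{B}(H)$ and $\mu\in\mathbb{C}$ --- obtained by multiplying the polar decomposition $c=u|c|$ by $\mu$ and observing that $|\mu c|=|\mu|\,|c|$ and $\tfrac{\mu}{|\mu|}u$ is the partial isometry of $\mu c$ when $\mu\neq 0$, the case $\mu=0$ being trivial; and second, $\Delta_\lambda(c)=c$ whenever $c$ is normal (in particular for every scalar $\mu\textbf{1}$ and every hermitian $c$), which follows from \eqref{eq quasi-normal equiv to fixed point of the lambda-Aluthge transform} since normal elements are quasi-normal, the case $\lambda=1$ being immediate.

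The core step is to show $\Phi(\alpha\textbf{1})\in\mathbb{C}\textbf{1}$ for each $\alpha\in\mathbb{C}$. For $\alpha=0$ this is Proposition \ref{p basic properties}$(a)$; so fix $\alpha\neq0$ and set $w:=\Phi(\alpha\textbf{1})$. For a minimal (rank-one) projection $p\in\mathcal{B}(H)$ the projection $q:=\Phi(p)$ is again minimal by Proposition \ref{p basic properties}$(g)$, say $q=\xi\otimes\xi$ for a unit vector $\xi\in K$ (the rank-one projection onto $\mathbb{C}\xi$). Since $\alpha p$ and $\bar\alpha p$ are quasi-normal, applying $(h.3)$ to the pairs $(\alpha\textbf{1},p)$ and $(p,\alpha\textbf{1})$, together with $\Phi(\textbf{1})=\textbf{1}$ (Proposition \ref{p basic properties}$(e)$), gives
\[
\Phi(\alpha p)=\Delta_\lambda(wq),\qquad \Phi(\bar\alpha p)=\Delta_\lambda(qw^*).
\]
A direct computation of the $\lambda$-Aluthge transform of these rank-one operators yields $\Delta_\lambda(wq)=\langle w\xi,\xi\rangle\,(\xi\otimes\xi)$ and $\Delta_\lambda(qw^*)=\overline{\langle w\xi,\xi\rangle}\,P_{w\xi}$, where $P_{w\xi}$ is the rank-one projection onto $\mathbb{C}w\xi$ and where $w\xi\neq0$ (otherwise $\Phi(\alpha p)=0=\Phi(0)$ would force $\alpha p=0$ by injectivity). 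Running the first identity with $\bar\alpha$ in place of $\alpha$ shows that $\Phi(\bar\alpha p)$ is a scalar multiple of $\xi\otimes\xi$; comparing with $\overline{\langle w\xi,\xi\rangle}\,P_{w\xi}$ and using $\langle w\xi,\xi\rangle\neq0$ forces $P_{w\xi}=\xi\otimes\xi$, i.e.\ $\xi$ is an eigenvector of $w$. As every unit vector of $K$ arises this way ($\Phi$ restricts to a bijection between the minimal projections of $\mathcal{B}(H)$ and those of $\mathcal{B}(K)$ by Proposition \ref{p basic properties}$(g)$), $w$ is a scalar multiple of $\textbf{1}$. This defines a map $h:\mathbb{C}\to\mathbb{C}$ with $\Phi(\alpha\textbf{1})=h(\alpha)\textbf{1}$; it is injective because $\Phi$ is, and surjective because $\Phi^{-1}$ also satisfies $(h.3)$ and the same argument applies to it. This proves $(b)$, and $\Phi(\textbf{1})=\textbf{1}$, $\Phi(0)=0$ give $h(1)=1$ and $h(0)=0$, which is $(a)$.

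With $h$ in hand, $(c)$ follows by applying $(h.3)$ to the pair $(\alpha\textbf{1},\beta\textbf{1})$: since $\Delta_\lambda$ fixes the scalars $\alpha\bar\beta\textbf{1}$ and $h(\alpha)\overline{h(\beta)}\textbf{1}$, we get $h(\alpha\bar\beta)\textbf{1}=\Phi(\alpha\bar\beta\textbf{1})=\Delta_\lambda\!\big(h(\alpha)\textbf{1}\cdot\overline{h(\beta)}\textbf{1}\big)=h(\alpha)\overline{h(\beta)}\textbf{1}$, and the case $\alpha=1$ gives $h(\bar\alpha)=\overline{h(\alpha)}$. For $(d)$: since $-\textbf{1}\in M_{sa}$, Corollary \ref{c hermitian and skew symmetric for h3 and h4} shows $h(-1)\in\mathbb{R}$, while $(c)$ gives $|h(-1)|^2=h\big((-1)\overline{(-1)}\big)=h(1)=1$; as $h(-1)=1$ would yield $\Phi(-\textbf{1})=\textbf{1}=\Phi(\textbf{1})$, contradicting injectivity, we get $h(-1)=-1$, and then $(c)$ gives $h(-\alpha)=h(\alpha)\overline{h(-1)}=-h(\alpha)$. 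Finally, for $(e)$, let $a\in\mathcal{B}(H)$ be hermitian, so $\Phi(a)$ is hermitian by Corollary \ref{c hermitian and skew symmetric for h3 and h4}; since $\alpha a=(\alpha\textbf{1})a^*$ is normal, $(h.3)$ applied to $(\alpha\textbf{1},a)$ together with the homogeneity of $\Delta_\lambda$ gives $\Phi(\alpha a)=\Phi(\Delta_\lambda(\alpha a))=\Delta_\lambda\!\big(h(\alpha)\textbf{1}\cdot\Phi(a)^*\big)=h(\alpha)\,\Delta_\lambda(\Phi(a))=h(\alpha)\,\Phi(a)$.

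The only non-routine point is the scalar-valuedness of $\Phi(\alpha\textbf{1})$, which rests on the explicit rank-one $\lambda$-Aluthge computation in the second paragraph; this is also the place where working in $\mathcal{B}(H)$ rather than in a general von Neumann algebra is used, through the abundance of minimal projections. All remaining assertions are bookkeeping applications of $(h.3)$, Proposition \ref{p basic properties}, and Corollary \ref{c hermitian and skew symmetric for h3 and h4}.
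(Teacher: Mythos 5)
Your proof is correct and takes essentially the same route as the paper's: both arguments rest on the rank-one formula of \cite[Proposition 2.1]{Chabbabi2017product} applied to $\Delta_{\lambda}(\Phi(\alpha \textbf{1})\Phi(p))$ and $\Delta_{\lambda}(\Phi(p)\Phi(\alpha \textbf{1})^*)$ for minimal projections $p$, concluding that every unit vector of $K$ is an eigenvector of $\Phi(\alpha \textbf{1})$ and hence that $\Phi(\alpha \textbf{1})$ is scalar. The only difference is organizational: the paper first builds an auxiliary family $h_p$ (one scalar function per minimal projection, via Lemma \ref{l Chabb 2.3 von Neumann}) and then shows these all agree, whereas you define $h$ directly from $\Phi(\alpha \textbf{1})=h(\alpha)\textbf{1}$ and get the multiplicativity in $(c)$ by feeding scalar pairs into $(h.3)$ --- a mild streamlining of the same idea.
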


\begin{proof}  Most of the arguments in \cite[Proposition 3.2]{ChabbabiMbekhta2017Jordan} remain valid for our purposes, we include an sketch of the ideas for completeness reasons. By Proposition \ref{p basic properties}$(a)$ and $(e)$, $\Phi(0)=0$ and $\phi(\textbf{1})=\textbf{1}$. We can therefore set $h(0)=0$ and $h(1)=1.$ We consider now a nonzero scalar $\alpha\in \mathbb{C}.$ Let us take a minimal projection $p$ in $\mathcal{B}(H)$. By hypothesis we have $$\Delta_{\lambda} (\Phi(\alpha p) \Phi (p))= \Delta_{\lambda} (\Phi(\alpha p) \Phi (p)^*) =\Phi ( \alpha p ).$$ Lemma \ref{l Chabb 2.3 von Neumann} implies that $\Phi(\alpha p) \Phi (p) =  \Phi (p) \Phi(\alpha p)=  \Phi (p) \Phi(\alpha p)  \Phi (p) =  \Phi(\alpha p),$ and since $  \Phi (p)$ is a minimal projection (compare Proposition \ref{p basic properties}$(g)$), we deduce the existence of a unique $h_p(\alpha)\in \mathbb{C}\backslash\{0\}$ such that $ \Phi(\alpha p) = h_p(\alpha ) \Phi(p)$. We also set $h_p(0)=0$ and $h_p(1)=1.$\smallskip

We have therefore proved that $\Phi (\mathbb{C} p ) \subseteq \mathbb{C} \Phi(p),$ $\Phi|_{\mathbb{C} p} : \mathbb{C} p \to  \mathbb{C} \Phi(p)$ is a bijection (just apply the above conclusion to $\Phi^{-1}$), and $\Phi (\alpha p ) = h_p(\alpha) \Phi (p)$ for all $\alpha \in \mathbb{C}$. In particular $h_{p} : \mathbb{C} \to  \mathbb{C}$ is a bijection for every minimal projection $p$ in $\mathcal{B}(H)$. Furthermore, by the hypothesis and the above properties $$h_p(\alpha) \overline{h_p(\beta)} \Phi(p)= \Delta_{\lambda} (\Phi(\alpha p ) \Phi(\beta p)^*)= \Phi( \Delta_{\lambda} ( (\alpha p) (\beta p)^*))$$ $$ = \Phi (\alpha \overline{\beta} p) = h_p(\alpha \overline{\beta}) \Phi (p).$$ We deduce that \begin{equation}\label{eq hp is multiplicative} h_p(\alpha) \overline{h_p(\beta)} =  h_p(\alpha \overline{\beta}),
 \end{equation} for all $\alpha, \beta\in \mathbb{C}$.\smallskip

Now for each $\alpha \in \mathbb{C}\backslash\{0\}$ and each minimal projection $p\in \mathcal{B}(H)$ we have $$\Delta_{\lambda}(\Phi(\alpha \textbf{1}) \Phi(p)) = \Phi (\Delta_{\lambda} (\alpha p)) = \Phi (\alpha p ) = h_p (\alpha) \Phi (p).$$ Having in mind Proposition \ref{p basic properties}$(g)$, we conclude that $$\Delta_{\lambda}(\Phi(\alpha \textbf{1})(\xi)\otimes \xi) =\Delta_{\lambda}(\Phi(\alpha \textbf{1}) q) = h_{\Phi^{-1}(q)} (\alpha) q,$$ for every minimal projection $q=\xi \otimes \xi$ in $\mathcal{B}(K)$ with $\xi\in K$ and $\|\xi\|=1$. The element $\Phi(\alpha \textbf{1})(\xi)$ must be nonzero because $q$ and $h_{\Phi^{-1}(q)} (\alpha)$ both are nonzero. Proposition 2.1 in \cite{Chabbabi2017product} asserts that $$\Delta_{\lambda}(\Phi(\alpha \textbf{1})(\xi)\otimes \xi) = \langle \Phi(\alpha \textbf{1})(\xi)| \xi \rangle \xi\otimes \xi =\langle \Phi(\alpha \textbf{1})(\xi)| \xi \rangle q,$$ where $\langle.|.\rangle$ denotes the inner product in $K$. We deduce that \begin{equation}\label{eq phi(alpha 1) xi and xi} \langle \Phi(\alpha \textbf{1})(\xi)| \xi \rangle = h_{\Phi^{-1}(q)} (\alpha),
\end{equation} for every $\alpha\in \mathbb{C}\backslash\{0\}$.\smallskip

On the other hand, $$h_p(\alpha) \xi\otimes\xi =\Delta_{\lambda}(\Phi(\alpha \textbf{1}) \Phi(p))=\Phi(\Delta_{\lambda}(\alpha p))=\Phi(\Delta_{\lambda}( p (\overline{\alpha} \textbf{1})^*)$$ $$=\Delta_{\lambda}(\Phi(p)\Phi(\overline{\alpha} \textbf{1})^*)=\Delta_{\lambda}((\xi\otimes \xi) \Phi(\overline{\alpha} \textbf{1})^*)=\Delta_{\lambda}((\xi\otimes \Phi(\overline{\alpha} \textbf{1})(\xi)) ) $$ $$= \langle \xi | \Phi(\overline{\alpha} \textbf{1})(\xi) \rangle \ \|\Phi(\overline{\alpha} \textbf{1})(\xi)\|^{-2} \Phi(\overline{\alpha} \textbf{1})(\xi)\otimes \Phi(\overline{\alpha} \textbf{1})(\xi).$$ This proves that $\Phi(\overline{\alpha} \textbf{1})(\xi)$ and $\xi$ are linearly dependent, and thus, by \eqref{eq phi(alpha 1) xi and xi}, we get $\Phi(\overline{\alpha} \textbf{1})(\xi) = h_{\Phi^{-1}(\xi\otimes\xi)} (\overline{\alpha}) \xi$ and $\Phi({\alpha} \textbf{1})(\xi) = h_{\Phi^{-1}(\xi\otimes\xi)} ({\alpha}) \xi$, for every $\alpha\in \mathbb{C}$. We have established that $$\Phi (\alpha \textbf{1}) = h_p (\alpha) \textbf{1},$$ for every $\alpha\in \mathbb{C}$ and for every minimal projection $p$ in $\mathcal{B}(H)$.\smallskip

If we set $h: \mathbb{C} \to  \mathbb{C}$, $h(\alpha) = h_p(\alpha)$, where $p$ is any minimal projection in $\mathcal{B}(H)$, then the mapping $h$ is well defined and is a bijection satisfying statements $(a)$ and $(b)$ above. Statement $(c)$ is a consequence of \eqref{eq hp is multiplicative}.\smallskip

$(d)$ Since $h(-1) \textbf{1} = \Phi (-\textbf{1}) \in \mathcal{B}(K)_{sa}$ (see Corollary \ref{c hermitian and skew symmetric for h3 and h4}), and by hypothesis $$h(-1)^2 \textbf{1} =  \Delta_{\lambda} (\Phi(-\textbf{1}) \Phi(-\textbf{1})^*) = \Phi ((\textbf{-1})^2) = \Phi (\textbf{1}) = \textbf{1},$$ we deduce that $h(-1)^2=1,$ this implies that $h(-1)=1$ or $h(-1)=-1.$ However $h$ bijective and $h(1)=1$ imply $h(-1)=-1.$\smallskip

Now, let $\alpha \in\CC.$ By the above properties we get $h(-\alpha) =h(\alpha) h(-1)=-h(\alpha).$\smallskip

$(e)$ Let us take a hermitian operator $a\in \mathcal{B}(H)$. The previous statements and Corollary \ref{c hermitian and skew symmetric for h3 and h4} guarantee that $$h(\alpha) \Phi (a) = \Delta_{\lambda} (h(\alpha) \Phi (a)) = \Delta_{\lambda} (\Phi(\alpha \textbf{1}) \Phi (a)^*) = \Phi (\alpha a). $$\end{proof}

We insert next a technical exercise of linear algebra which will be required for latter purposes. Henceforth the symbol $M_2(\mathbb{C})$ will stand for the C$^*$-algebra of $2 \times 2$ matrices with complex entries.

\begin{lemma}\label{l tecnical lemma linear algebra in M2(C)} Let $a$ be an element in $M_2(\mathbb{C})$, and let $\widehat{p}$ be a minimal projection in $M_2(\mathbb{C})$. Suppose that $\Delta_{\lambda}( a (\textbf{1}-\widehat{p})) = \mu (\textbf{1}-\widehat{p})$ and $\Delta_{\lambda}( (\textbf{1}-\widehat{p}) a^*) = \overline{\mu} (\textbf{1}-\widehat{p}),$ where $\mu$ is a nonzero complex number. Then $ \widehat{p}   a (\textbf{1}-\widehat{p})=0.$
\end{lemma}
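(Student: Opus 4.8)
The plan is to reduce everything to the structure of rank-one operators on $\mathbb{C}^2$ together with the explicit form of the $\lambda$-Aluthge transform of such operators. For $\lambda = 0$ the statement is immediate: there $\Delta_0$ is the identity map, so the first hypothesis reads $a(\textbf{1}-\widehat{p}) = \mu(\textbf{1}-\widehat{p})$, whence $\widehat{p}\,a(\textbf{1}-\widehat{p}) = \mu\,\widehat{p}(\textbf{1}-\widehat{p}) = 0$. So I would assume $\lambda\in(0,1]$.

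Write $M_2(\mathbb{C}) = \mathcal{B}(\mathbb{C}^2)$ and fix unit vectors $\xi,\eta\in\mathbb{C}^2$ with $\widehat{p} = \xi\otimes\xi$ and $\widehat{q} := \textbf{1}-\widehat{p} = \eta\otimes\eta$; since $\widehat{p}\perp\widehat{q}$ we have $\langle\eta|\xi\rangle = 0$, so $\{\xi,\eta\}$ is an orthonormal basis. Put $b := a(\textbf{1}-\widehat{p}) = a\widehat{q}$; then $b^* = (\textbf{1}-\widehat{p})a^*$, the two hypotheses become $\Delta_\lambda(b) = \mu\,\widehat{q}$ and $\Delta_\lambda(b^*) = \overline{\mu}\,\widehat{q}$, and the conclusion $\widehat{p}\,a(\textbf{1}-\widehat{p}) = 0$ is exactly $\widehat{p}\,b = 0$. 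Since $b\widehat{p} = a\widehat{q}\widehat{p} = 0$, decomposing $b$ along $\widehat{p}$ and $\widehat{q}$ leaves only $b = \widehat{p}\,b\,\widehat{q} + \widehat{q}\,b\,\widehat{q}$; using $\widehat{p}\,\mathcal{B}(\mathbb{C}^2)\,\widehat{q} = \mathbb{C}\,\xi\otimes\eta$ and $\widehat{q}\,\mathcal{B}(\mathbb{C}^2)\,\widehat{q} = \mathbb{C}\,\eta\otimes\eta$, I can write $b = (\alpha_0\xi + \beta_0\eta)\otimes\eta$ with $\alpha_0,\beta_0\in\mathbb{C}$, so that $\widehat{p}\,b = \alpha_0\,\xi\otimes\eta$ and it remains to show $\alpha_0 = 0$.

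Now I would invoke the description of the $\lambda$-Aluthge transform of a rank-one operator from \cite[Proposition 2.1]{Chabbabi2017product} (already used above in Proposition \ref{p def function h}): $\Delta_\lambda(\zeta\otimes\theta) = \langle\zeta|\theta\rangle\,\|\theta\|^{-2}\,\theta\otimes\theta$ for $\theta\neq 0$ (the right-hand side being $0$ exactly when $\langle\zeta|\theta\rangle = 0$). Applying it to $b = (\alpha_0\xi + \beta_0\eta)\otimes\eta$ and using $\langle\xi|\eta\rangle = 0$, $\|\eta\| = 1$, we get $\Delta_\lambda(b) = \beta_0\,\eta\otimes\eta$, so the first hypothesis gives $\beta_0 = \mu$. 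Applying it to $b^* = \eta\otimes(\alpha_0\xi + \beta_0\eta)$, we get, whenever $\alpha_0\xi + \beta_0\eta\neq 0$,
$$\Delta_\lambda(b^*) = \overline{\beta_0}\,\|\alpha_0\xi + \beta_0\eta\|^{-2}\,(\alpha_0\xi + \beta_0\eta)\otimes(\alpha_0\xi + \beta_0\eta).$$
The second hypothesis forces this to equal $\overline{\mu}\,\eta\otimes\eta$, which is nonzero since $\mu\neq 0$; hence $\alpha_0\xi + \beta_0\eta\neq 0$ and the rank-one operator $(\alpha_0\xi + \beta_0\eta)\otimes(\alpha_0\xi + \beta_0\eta)$ is a nonzero scalar multiple of $\eta\otimes\eta$. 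Comparing ranges, $\mathbb{C}(\alpha_0\xi + \beta_0\eta) = \mathbb{C}\eta$, which forces $\alpha_0 = 0$ because $\xi\perp\eta$. Therefore $\widehat{p}\,a(\textbf{1}-\widehat{p}) = \widehat{p}\,b = \alpha_0\,\xi\otimes\eta = 0$.

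The computation itself is short once this rank-one picture is in place; the only point that needs care is to rule out the degenerate situations in which the rank-one Aluthge formula returns $0$ (namely $a(\textbf{1}-\widehat{p}) = 0$, or one of the relevant inner products vanishing), and I expect the main — in fact the only — obstacle to be the observation that each of these is excluded by the assumption $\mu\neq 0$, since then both $\mu(\textbf{1}-\widehat{p})$ and $\overline{\mu}(\textbf{1}-\widehat{p})$ are nonzero. Everything else is dictated by the multiplication rules of the matrix units of $M_2(\mathbb{C})$.
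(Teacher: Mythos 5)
Your proof is correct, and it reaches the conclusion by a genuinely different (and cleaner) route than the paper. The paper fixes coordinates with $\widehat{p}=\left(\begin{smallmatrix}1&0\\0&0\end{smallmatrix}\right)$, computes the polar decompositions of $a(\textbf{1}-\widehat{p})$ and $(\textbf{1}-\widehat{p})a^*$ explicitly as $2\times 2$ matrices, evaluates both Aluthge transforms entrywise, and derives the contradiction $\lambda_0^{-2}|\alpha_{22}|^2\overline{\alpha_{12}}=0$ from the $(2,1)$ entry of the second identity. You instead observe that $b=a(\textbf{1}-\widehat{p})$ and $b^*$ are rank-one, and invoke the formula $\Delta_{\lambda}(\zeta\otimes\theta)=\langle\zeta|\theta\rangle\|\theta\|^{-2}\,\theta\otimes\theta$ (which the paper itself uses elsewhere, in Proposition \ref{p def function h} and Lemma \ref{l additivity of h}); the first hypothesis then pins down $\beta_0=\mu\neq 0$ and the second forces the range of $(\alpha_0\xi+\beta_0\eta)\otimes(\alpha_0\xi+\beta_0\eta)$ to be $\mathbb{C}\eta$, whence $\alpha_0=0$. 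The underlying computation is the same, but your packaging is coordinate-free, reuses a lemma already in play, and makes transparent exactly where $\mu\neq 0$ is needed. The only caveat worth flagging: \cite[Proposition 2.1]{Chabbabi2017product} is stated there for $\lambda\in(0,1)$, so strictly speaking the endpoint $\lambda=1$ requires the one-line verification that $\Delta_{1}(\zeta\otimes\theta)=|T|u$ yields the same expression $\langle\zeta|\theta\rangle\|\theta\|^{-2}\,\theta\otimes\theta$ (it does, since $|T|$ and the range projection of $|T|$ are both multiples of $\theta\otimes\theta$); the paper silently takes the same liberty, but if you want your argument airtight for $\lambda=1$ you should include that line.
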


\begin{proof} We may assume, without any loss of generality, that $\widehat{p} = \left(
                                                                                   \begin{array}{cc}
                                                                                     1 & 0 \\
                                                                                     0 & 0 \\
                                                                                   \end{array}
                                                                                 \right),$ $\textbf{1}-\widehat{p} = \left(
                                                                                                                      \begin{array}{cc}
                                                                                                                        0 & 0 \\
                                                                                                                        0 & 1 \\
                                                                                                                      \end{array}
                                                                                                                    \right)
,$ and $a= \left(
             \begin{array}{cc}
               \alpha_{11} & \alpha_{12} \\
               \alpha_{21} & \alpha_{22} \\
             \end{array}
           \right).$ It will be enough to show that $\alpha_{12} =0$. Arguing by contradiction we assume $\alpha_{12}\neq 0$.\smallskip

In our setting we have $$a (\textbf{1}-\widehat{p}) = \left(
             \begin{array}{cc}
               0 & \alpha_{12} \\
               0 & \alpha_{22} \\
             \end{array}\right),\ |a (\textbf{1}-\widehat{p})|^2 = \lambda_0^2 (\textbf{1}-\widehat{p}),$$  with $\lambda_0=\sqrt{|\alpha_{12}|^2 + |\alpha_{22}|^2}>0$, the polar decomposition of $ a (\textbf{1}-\widehat{p})$ is $$a (\textbf{1}-\widehat{p}) = u_1 |a (\textbf{1}-\widehat{p})|, \hbox{ with } u_1= \lambda_0^{-1} \left(
             \begin{array}{cc}
             0 & \alpha_{12} \\
             0 & \alpha_{22} \\
             \end{array}
             \right),
             $$
 and hence the condition $\Delta_{\lambda}( a (\textbf{1}-\widehat{p})) = \mu (\textbf{1}-\widehat{p})$ is equivalent to
 $$\alpha_{22} (\textbf{1}-\widehat{p})= \lambda_0^{\lambda}\ (\textbf{1}-\widehat{p})\ u_1\ \lambda_0^{1-\lambda}\ (\textbf{1}-\widehat{p}) = \Delta_{\lambda}( a (\textbf{1}-\widehat{p})) = \mu (\textbf{1}-\widehat{p}),$$ which implies $\mu = \alpha_{22}\neq 0$.\smallskip

On the other hand
$$(\textbf{1}-\widehat{p}) a^*=\left(
             \begin{array}{cc}
               0 & 0 \\
               \overline{\alpha_{12}} & \overline{\alpha_{22}} \\
             \end{array}
           \right),\  |(\textbf{1}-\widehat{p}) a^*| = \lambda_0 p_0,$$ where $p_0$ is the projection given by $p_0 = \lambda_0^{-2} \left(
             \begin{array}{cc}
               |\alpha_{12}|^2 & {\alpha_{12}} \overline{\alpha_{22}} \\
               \overline{\alpha_{12}} {\alpha_{22}} & |\alpha_{22}|^2 \\
             \end{array}
           \right) $, and the polar decomposition of $(\textbf{1}-\widehat{p}) a^*$ is given by $(\textbf{1}-\widehat{p}) a^* = u_2 |(\textbf{1}-\widehat{p}) a^*|$ with $u_2 = \lambda_0^{-1} \left(
                                                                        \begin{array}{cc}
                                                                          0 & 0 \\
                                                                          \overline{\alpha_{12}} & \overline{\alpha_{22}} \\
                                                                        \end{array}
                                                                      \right)
           $. According to this, the equation $\Delta_{\lambda} ( (\textbf{1}-\widehat{p}) a^*) = \overline{\mu} (\textbf{1}-\widehat{p})$ rewrites in the form
$$\lambda_0^{-4} \left(
             \begin{array}{cc}
               |\alpha_{12}|^2 & {\alpha_{12}} \overline{\alpha_{22}} \\
               \overline{\alpha_{12}} {\alpha_{22}} & |\alpha_{22}|^2 \\
             \end{array}
           \right) \left(
           \begin{array}{cc}
            0 & 0 \\
            \overline{\alpha_{12}} (|\alpha_{12}|^2+|\alpha_{22}|^2) & \overline{\alpha_{22}}  (|\alpha_{12}|^2+|\alpha_{22}|^2) \\
             \end{array}
              \right)$$ $$= \lambda_0^{-4} \left(
             \begin{array}{cc}
               |\alpha_{12}|^2 & {\alpha_{12}} \overline{\alpha_{22}} \\
               \overline{\alpha_{12}} {\alpha_{22}} & |\alpha_{22}|^2 \\
             \end{array}
           \right) \left(
                                                                        \begin{array}{cc}
                                                                          0 & 0 \\
                                                                          \overline{\alpha_{12}} & \overline{\alpha_{22}} \\
                                                                        \end{array}
                                                                      \right) \left(
             \begin{array}{cc}
               |\alpha_{12}|^2 & {\alpha_{12}} \overline{\alpha_{22}} \\
               \overline{\alpha_{12}} {\alpha_{22}} & |\alpha_{22}|^2 \\
             \end{array}
           \right)    $$ $$=\lambda_0\ p_0 u_2 p_0= \lambda_0^{\lambda}\ p_0\  u_2\  \lambda_0^{1-\lambda}\  p_0= \Delta_{\lambda} ( (\textbf{1}-\widehat{p}) a^*)= \overline{\mu} (\textbf{1}-\widehat{p})=  \overline{\mu} \left(
                                                                                                     \begin{array}{cc}
                                                                                                       0 & 0 \\
                                                                                                       0 & 1 \\
                                                                                                     \end{array}
                                                                                                   \right),
           $$ the equality among the elements in the $(2,1)$ entries gives $ \lambda_0^{-2}\ |\alpha_{22}|^2 \overline{\alpha_{12}} =0,$ which is impossible.
\end{proof}

Our next result has been more or less explicitly established in \cite[Lemma 2.5]{Chabbabi2017product} for an arbitrary bijection $\Phi:\mathcal{B}(H)\to \mathcal{B}(K)$ satisfying \eqref{eq product commuting maps}, that is, $\Phi(\Delta_{\lambda}(a b))=\Delta_{\lambda}(\Phi(a) \Phi(b)),$ for all $a, b\in \mathcal{B}(H),$ and a fixed $\lambda\in (0,1)$. Instead of the commented hypothesis, we consider a bijection $\Phi:\mathcal{B}(H)\to \mathcal{B}(K)$ satisfying hypothesis $(h.3)$ in Problem \ref{problems} for a fixed  $\lambda$ in $(0,1]$, and we offer an independent proof which is also valid for the just quoted result.

\begin{lemma}\label{lem1.5}
Let $\Phi:\mathcal{B}(H)\to \mathcal{B}(K)$ a bijection satisfying hypothesis $(h.3)$ in Problem \ref{problems} for a fixed  $\lambda$ in $(0,1]$. Suppose $p$ and $q$ are minimal projections in $\mathcal{B}(H)$ with $p \perp q.$ Then the identity
$$\Phi(\alpha p+\beta q)=\Phi(\alpha p) + \Phi(\beta q)= h(\alpha) \Phi(p)+ h(\beta) \Phi(q),$$ holds for all $\alpha,\beta$ in $\mathbb{C}$, where $h$ is the function defined by Proposition \ref{p def function h}.
\end{lemma}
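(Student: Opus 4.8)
The plan is to show that $x:=\Phi(\alpha p+\beta q)$ equals $h(\alpha)\Phi(p)+h(\beta)\Phi(q)$; the second asserted equality $\Phi(\alpha p)+\Phi(\beta q)=h(\alpha)\Phi(p)+h(\beta)\Phi(q)$ is immediate from Proposition \ref{p def function h}$(e)$ because $p$ and $q$ are hermitian. If $\alpha=0$ (or $\beta=0$) the identity reduces, via Proposition \ref{p basic properties}$(a)$ and Proposition \ref{p def function h}$(e)$, to $\Phi(\beta q)=h(\beta)\Phi(q)$, so I may assume $\alpha,\beta\in\mathbb{C}\setminus\{0\}$, and then $h(\alpha),h(\beta)\neq 0$ since $h$ is a bijection fixing $0$. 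Write $P=\Phi(p)$, $Q=\Phi(q)$; by Proposition \ref{p basic properties}$(f)$, $(g)$, $(h)$ these are orthogonal minimal projections in $\mathcal{B}(K)$ and $\Phi(p+q)=P+Q$.

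The core of the argument is to feed well-chosen pairs into hypothesis $(h.3)$, using that $\alpha p+\beta q$, $\alpha p$ and $\overline{\alpha}p$ (and the analogues with $q$) are normal, hence quasi-normal, hence fixed by $\Delta_{\lambda}$ by \eqref{eq quasi-normal equiv to fixed point of the lambda-Aluthge transform}. Taking $a=\alpha p+\beta q$, $b=p+q$ yields $ab^{*}=\alpha p+\beta q$ and therefore $x=\Phi(\Delta_{\lambda}(ab^{*}))=\Delta_{\lambda}(x(P+Q))$. Taking $a=\alpha p+\beta q$, $b=p$ yields $ab^{*}=\alpha p$, hence $\Delta_{\lambda}(xP)=\Phi(\alpha p)=h(\alpha)P$; taking $a=p$, $b=\alpha p+\beta q$ yields $ab^{*}=\overline{\alpha}p$, hence $\Delta_{\lambda}(Px^{*})=\Phi(\overline{\alpha}p)=\overline{h(\alpha)}P$ (using Proposition \ref{p def function h}$(c)$). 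The same computations with $q$ in place of $p$ give $\Delta_{\lambda}(xQ)=h(\beta)Q$ and $\Delta_{\lambda}(Qx^{*})=\overline{h(\beta)}Q$.

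From $\Delta_{\lambda}(x(P+Q))=x$ and Lemma \ref{l Chabb 2.3 von Neumann} we get $x=(P+Q)x(P+Q)$, so $x$ lives in the corner $(P+Q)\mathcal{B}(K)(P+Q)$, which is $^{*}$-isomorphic to $M_{2}(\mathbb{C})$ and in which $\lambda$-Aluthge transforms are computed exactly as in $\mathcal{B}(K)$. Applying Lemma \ref{l tecnical lemma linear algebra in M2(C)} inside this corner with $\widehat{p}=Q$, $\textbf{1}-\widehat{p}=P$ and $\mu=h(\alpha)\neq0$ gives $QxP=0$; applying it with $\widehat{p}=P$, $\textbf{1}-\widehat{p}=Q$ and $\mu=h(\beta)\neq0$ gives $PxQ=0$. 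Hence $x=PxP+QxQ$, and minimality of $P$ and $Q$ forces $x=\mu_{1}P+\mu_{2}Q$ for scalars $\mu_{1},\mu_{2}$. Since $xP=\mu_{1}P$ and $\mu_{1}P$ is normal, $\mu_{1}P=\Delta_{\lambda}(xP)=h(\alpha)P$, so $\mu_{1}=h(\alpha)$, and likewise $\mu_{2}=h(\beta)$; thus $x=h(\alpha)P+h(\beta)Q$, as desired.

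The step needing genuine care is the passage to the $2\times2$ corner: one must observe that $\Delta_{\lambda}$ is unchanged when computed relative to the reduced unit $P+Q$ (which follows from the invariance of polar decompositions under von Neumann subalgebras together with $0^{\lambda}=0$ for $\lambda>0$), so that the purely matrix-algebraic Lemma \ref{l tecnical lemma linear algebra in M2(C)} becomes applicable. Everything else is routine bookkeeping with hypothesis $(h.3)$ and the already established properties of $\Phi$ and $h$.
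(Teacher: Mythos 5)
Your proposal is correct and follows essentially the same route as the paper: reduce to nonzero $\alpha,\beta$, use hypothesis $(h.3)$ with the pairs $(\alpha p+\beta q,\,p+q)$, $(\alpha p+\beta q,\,p)$, $(p,\,\alpha p+\beta q)$ (and the analogues with $q$) to pin down $\Delta_{\lambda}(x(P+Q))$, $\Delta_{\lambda}(xP)$, $\Delta_{\lambda}(Px^{*})$, etc., then invoke Lemma \ref{l Chabb 2.3 von Neumann} to localize $x$ in the corner $(P+Q)\mathcal{B}(K)(P+Q)\cong M_{2}(\mathbb{C})$ and Lemma \ref{l tecnical lemma linear algebra in M2(C)} twice to kill the off-diagonal blocks. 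Your explicit remark that $\Delta_{\lambda}$ is unchanged when computed relative to the reduced unit $P+Q$ is a point the paper passes over silently, and is a welcome addition.
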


\begin{proof} Let us fix $\alpha,\beta$ in $\mathbb{C}\backslash \{0\},$ and set $a = \Phi(\alpha p+\beta q)$. By Proposition \ref{p basic properties}$(f),$ $(g)$ and $(h)$, we know that $\widehat{p}=\Phi(p)$ and $\widehat{q}=\Phi(q)$ are mutually orthogonal minimal projections in $\mathcal{B}(K)$ with $\widehat{p}+\widehat{q} =\Phi(p) +\Phi (q) = \Phi (p+q)$. By hypothesis $$\Delta_{\lambda} (a (\widehat{p}+\widehat{q})) = \Delta_{\lambda} (\Phi(\alpha p+\beta q) \Phi (p+q)^*) = \Phi( \Delta_{\lambda} ( (\alpha p+\beta q) (p+q)^*)) $$ $$=   \Phi(\alpha p+\beta q) = a.$$ Lemma \ref{l Chabb 2.3 von Neumann} assures that $ a  (\widehat{p}+\widehat{q})=  (\widehat{p}+\widehat{q})a =  (\widehat{p}+\widehat{q})a  (\widehat{p}+\widehat{q})=a$. We can therefore regard $a$, $\widehat{p}$, $\widehat{q}$ and $\widehat{p}+\widehat{q}$ inside $ (\widehat{p}+\widehat{q}) \mathcal{B}(K) (\widehat{p}+\widehat{q}) = M_2 (\mathbb{C})$.\smallskip

We also know from the hypothesis and Proposition \ref{p def function h} that \begin{equation}\label{eq 1 1212} \Delta_{\lambda} (a \widehat{p}) = \Delta_{\lambda} (\Phi(\alpha p+\beta q) \Phi (p)^*) = \Phi( \Delta_{\lambda} ( (\alpha p+\beta q) p^*))
\end{equation} $$=   \Phi(\alpha p) = h(\alpha) \Phi (p) = h(\alpha) \widehat{p},$$
\begin{equation}\label{eq 2 1212} \Delta_{\lambda} (\widehat{p} a^*) = \Delta_{\lambda} (\Phi (p) \Phi(\alpha p+\beta q)^*) = \Phi( \Delta_{\lambda} ( p (\alpha p+\beta q)^*))
\end{equation} $$=   \Phi(\overline{\alpha} p) = h(\overline{\alpha}) \Phi (p) = \overline{h(\alpha)} \widehat{p},$$ and similarly, \begin{equation}\label{eq 3 1212} \Delta_{\lambda} (a \widehat{q})= \Phi(\beta q) = h(\beta) \widehat{q},\hbox{ and } \Delta_{\lambda} (\widehat{q} a^*)= \Phi(\overline{\beta} q) = \overline{h(\beta)} \widehat{q}.
 \end{equation} By applying Lemma \ref{l tecnical lemma linear algebra in M2(C)} to the pairs $(a,\widehat{p})$ and $(a, \widehat{q})$ we deduce, via \eqref{eq 1 1212}, \eqref{eq 2 1212} and \eqref{eq 3 1212}, that $\widehat{p} a \widehat{q} = \widehat{q} a \widehat{p}=0,$ and consequently $$ \Phi(\alpha p+\beta q) =a = \widehat{p} a \widehat{p} + \widehat{q} a \widehat{q} = a \widehat{p} +  a \widehat{q}.$$

Finally, by the minimality of $\widehat{p}$ and $\widehat{q},$ we know that $a  \widehat{p} = \widehat{p} a \widehat{p} = \alpha_{a} \widehat{p},$ 
and $a  \widehat{q} = \widehat{q} a \widehat{q} = \beta_{a} \widehat{q},$ 
for unique $\alpha_a,\beta_a$ in $\mathbb{C}$. Combining this information with \eqref{eq 1 1212} and \eqref{eq 3 1212} we get $ a \widehat{p}= \Delta_{\lambda} (a \widehat{p}) = \Phi(\alpha p)=   h(\alpha) \Phi (p) =  h(\alpha) \widehat{p}$ and $ a \widehat{q}= \Delta_{\lambda} (a \widehat{q}) = \Phi({\beta} q) = h(\beta) \Phi (q) = h(\beta) \widehat{q},$ and thus $$ \Phi(\alpha p+\beta q) =a =  a \widehat{p} +  a \widehat{q} =\Phi(\alpha p) + \Phi(\beta q) = h(\alpha) \Phi (p)+ h(\beta) \Phi (q).$$
\end{proof}

We require at this stage the existence of at least two mutually orthogonal minimal projections in $\mathcal{B}(H)$ to guarantee that the mapping $h$ given by Proposition \ref{p def function h} is additive. For each minimal projection $p= \xi\otimes \xi\in \mathcal{B}(H)$ (where $\xi$ is a norm-one element in $H$), the symbol $\varphi_p$ will denote the trace class functional defined by $\varphi_p (a) = \langle a(\xi)|\xi \rangle$ ($a\in \mathcal{B}(H)$). The functional $\varphi_p$ is the unique positive normal state in $\mathcal{B}(H)_{*}$ satisfying $\varphi_p (p) =1.$ Pure states in the predual of a von Neumann algebra $M$ are in one-to-one correspondence with minimal projections in $M$ (see \cite[Proposition 3.13.6]{Ped}).

\begin{lemma}\label{l additivity of h} Let $H$ and $K$ be complex Hilbert spaces with dim$(H)\geq 2$. Suppose $\Phi:\mathcal{B}(H)\to \mathcal{B}(K)$ a bijection satisfying hypothesis $(h.3)$ in Problem \ref{problems} for a fixed  $\lambda$ in $(0,1]$. Let $h$ be the mapping defined by Proposition \ref{p def function h}. Then the following statements hold:\begin{enumerate}[$(a)$]\item For each minimal projection $p$ in $\mathcal{B}(H)$ the identity
$$\varphi_{\Phi(p)} (\Phi(a)) \Phi (p)=\Phi (p) \Phi(a) \Phi(p)  = h (\varphi_p (a)) \Phi (p),$$ holds for all $a\in\mathcal{B}(H);$
\item The function $h$ is additive;
\item $h$ is the identity or the conjugation on $\mathbb{C}$.
\end{enumerate}
\end{lemma}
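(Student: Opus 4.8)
The plan is to establish $(a)$ first, deduce $(b)$ from $(a)$ together with Lemma~\ref{lem1.5} and a suitable choice of a third minimal projection, and finally read off $(c)$ from the functional equations already proved for $h$. For $(a)$ I would write a minimal projection as $p=\xi\otimes\xi$ with $\|\xi\|=1$ and set $q:=\Phi(p)=\eta\otimes\eta$, which is again minimal by Proposition~\ref{p basic properties}$(c),(g)$. Since $q$ is minimal, a one-line rank-one computation gives $q\,\Phi(a)\,q=\langle\Phi(a)\eta\,|\,\eta\rangle\,q=\varphi_{\Phi(p)}(\Phi(a))\,q$, which is the first asserted equality. For the second I would use that $ap=(a\xi)\otimes\xi$ is rank one, so \cite[Proposition~2.1]{Chabbabi2017product} (whose argument also covers $\lambda=1$) yields $\Delta_{\lambda}(ap)=\langle a\xi\,|\,\xi\rangle\,p=\varphi_p(a)\,p$. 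Applying $\Phi$, using hypothesis $(h.3)$ with $b=p$ (so $\Phi(p)^{*}=q$), and invoking Proposition~\ref{p def function h}$(e)$ (as $p$ is hermitian),
\[
\Delta_{\lambda}\!\big(\Phi(a)\,q\big)=\Phi\!\big(\Delta_{\lambda}(ap)\big)=\Phi\!\big(\varphi_p(a)\,p\big)=h(\varphi_p(a))\,q .
\]
On the other hand $\Phi(a)\,q=(\Phi(a)\eta)\otimes\eta$ is rank one, so a second use of \cite[Proposition~2.1]{Chabbabi2017product} gives $\Delta_{\lambda}(\Phi(a)\,q)=\varphi_{\Phi(p)}(\Phi(a))\,q$. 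Comparing the two expressions yields $\varphi_{\Phi(p)}(\Phi(a))=h(\varphi_p(a))$, completing $(a)$.

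For $(b)$ I would use $\dim(H)\ge 2$ to choose orthogonal unit vectors $\xi,\zeta\in H$, set $p=\xi\otimes\xi$, $q=\zeta\otimes\zeta$, and $r=\omega\otimes\omega$ with $\omega=\tfrac{1}{\sqrt2}(\xi+\zeta)$; then $p\perp q$ are minimal, $r$ is minimal, and $\varphi_r(p)=\varphi_r(q)=\tfrac12$. Fixing $\alpha,\beta\in\CC$ and putting $a=2\alpha p+2\beta q$ one has $\varphi_r(a)=\alpha+\beta$, while Lemma~\ref{lem1.5} gives $\Phi(a)=h(2\alpha)\Phi(p)+h(2\beta)\Phi(q)$. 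Compressing by $\Phi(r)$ and applying $(a)$ to the pairs $(r,a)$, $(r,p)$ and $(r,q)$ yields
\[
h(\alpha+\beta)\,\Phi(r)=\Phi(r)\Phi(a)\Phi(r)=\big(h(2\alpha)+h(2\beta)\big)\,h(\tfrac12)\,\Phi(r).
\]
Proposition~\ref{p def function h}$(c)$ shows that $h(2)$ and $h(\tfrac12)$ are real, that $h(2)h(\tfrac12)=h(1)=1$, and that $h(2\alpha)=h(2)h(\alpha)$; inserting this into the displayed identity and cancelling $\Phi(r)\neq0$ gives $h(\alpha+\beta)=h(\alpha)+h(\beta)$.

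For $(c)$, once $h$ is additive, Proposition~\ref{p def function h}$(c)$ makes $h|_{\RR}$ a unital additive and multiplicative map, i.e.\ a ring endomorphism of $\RR$ fixing $\mathbb{Q}$ and preserving squares, hence the identity; in particular $h$ fixes $\RR$ and $h(\beta\alpha)=\beta\,h(\alpha)$ for real $\beta$. Writing $h(i)=s+ti$ with $s,t\in\RR$, the identities $|1+h(i)|^{2}=h\big((1+i)\overline{(1+i)}\big)=h(2)=2$ and $|h(i)|^{2}=h(i\overline{i})=h(1)=1$ force $s=0$ and $t=\pm1$, so $h(i)=i$ or $h(i)=-i$. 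For $\alpha=u+vi$ with $u,v\in\RR$, additivity gives $h(\alpha)=u+v\,h(i)$, which equals $\alpha$ if $h(i)=i$ and $\overline{\alpha}$ if $h(i)=-i$; thus $h$ is the identity or the conjugation on $\CC$. The step I expect to be the crux is $(a)$: identifying $\varphi_{\Phi(p)}\circ\Phi$ with $h\circ\varphi_p$ on all of $\mathcal{B}(H)$ is where the rank-one behaviour of $\Delta_{\lambda}$ and the multiplicative structure of $h$ genuinely interact, whereas once $(a)$ is available, $(b)$ and $(c)$ amount to bookkeeping.
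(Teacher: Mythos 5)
Your proposal is correct and follows essentially the same route as the paper: part $(a)$ via the rank-one formula for $\Delta_{\lambda}$ from \cite[Proposition 2.1]{Chabbabi2017product} combined with hypothesis $(h.3)$ and Proposition \ref{p def function h}, part $(b)$ by compressing a combination of two orthogonal minimal projections through a third minimal projection at ``angle $45^{\circ}$'' and invoking Lemma \ref{lem1.5}, and part $(c)$ from the additive-multiplicative functional equations for $h$. The only (cosmetic) differences are that in $(b)$ the paper compresses $\alpha v_1+\beta v_2$ (with $v_1,v_2$ the rotated projections) by the original $p$ whereas you compress $2\alpha p+2\beta q$ by the rotated $r$, and in $(c)$ you locate $h(i)$ by the norm identity $|1+h(i)|^{2}=2$ instead of first showing $h(i\mathbb{R})\subseteq i\mathbb{R}$; both variants are sound.
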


\begin{proof}$(a)$ Every minimal projection $p$ in $\mathcal{B}(H)$ writes in the form $p= \xi\otimes \xi$ with $\xi$ in the unit sphere of $H$. We know from Proposition \ref{p basic properties}$(g)$ that $\Phi (p) = \eta\otimes \eta$ for a unique $\eta$ in the unit sphere of $K$. The pure states $\varphi_p$ and $\varphi_{\Phi(p)}$ are completely described by $\xi$ and $\eta$, respectively.\smallskip

By hypothesis, for each $a\in\mathcal{B}(H),$ we have
$$\Delta_{\lambda}(\Phi(a) (\eta) \otimes \eta) = \Delta_{\lambda}(\Phi(a) (\eta\otimes \eta) )=\Delta_{\lambda}(\Phi(a) \Phi(p))$$ $$=\Phi(\Delta_{\lambda}(a (\xi\otimes \xi)))=\Phi(\Delta_{\lambda}(a(\xi)\otimes \xi )).$$
Now, by \cite[Propostion 2.1]{Chabbabi2017product} we have
$$\Delta_{\lambda}(\Phi(a) (\eta) \otimes \eta) =\langle\Phi(a)(\eta) | \eta \rangle (\eta\otimes \eta),\hbox{ and } \Delta_{\lambda}(a(\xi)\otimes \xi )=\langle a(\xi)|\xi\rangle (\xi\otimes \xi)$$ and thus $$\varphi_{\Phi(p)} (\Phi(a)) \Phi (p)=\Phi (p) \Phi(a) \Phi(p)  = \langle\Phi(a)(\eta) | \eta \rangle (\eta\otimes \eta)= \Phi (\langle a(\xi)|\xi\rangle (\xi\otimes \xi) )  $$ $$= h(\langle a(\xi)|\xi\rangle) \Phi(\xi\otimes \xi) = h(\langle a(\xi)|\xi\rangle) (\eta\otimes \eta) = h (\varphi_p (a)) \Phi (p).$$

$(b)$ Let us pick two orthogonal minimal projections $p,q\in\mathcal{B}(H)$ (here we require that dim$(H)\geq 2$). We can find two minimal partial isometries $e_{12} = e_{21}^*$ satisfying $e_{12} e_{12}^* = p$ and $e_{12}^* e_{12}= q.$ Let us consider the minimal projections $v_1 = \frac12 (p + e_{12} + e_{21}+ q)$ and $v_2 = \frac12 (p - e_{12} - e_{21}+ q)$. Clearly, $v_1 \perp v_2$. By $(a)$ and Proposition \ref{p def function h} it follows that $$ \Phi (p) \Phi(\alpha v_1 + \beta v_2) \Phi(p) = h (\varphi_p (\alpha v_1 + \beta v_2)) \Phi (p) = h\left(\frac{\alpha+\beta}{2}\right)  \Phi (p).$$ On the other hand, by Lemma \ref{lem1.5} $\Phi(\alpha v_1 + \beta v_2) = \Phi(\alpha v_1) + \Phi( \beta v_2)$, and consequently, by $(a)$ and the proerties of $h$, we derive at
$$h\left(\frac{\alpha+\beta}{2}\right)  \Phi (p)= \Phi (p) \Phi(\alpha v_1 + \beta v_2) \Phi(p) $$ $$= \Phi (p) \Phi(\alpha v_1 ) \Phi(p) + \Phi (p) \Phi(\beta v_2) \Phi(p) = $$
$$=h (\varphi_p (\alpha v_1 )) \Phi (p) + h (\varphi_p ( \beta v_2)) \Phi (p) = h\left(\frac{\alpha}{2}\right)  \Phi (p) + h\left(\frac{\beta}{2}\right)  \Phi (p),$$ which assures that $$h(\alpha+\beta) h\left(\frac12\right) = h\left(\frac{\alpha+\beta}{2}\right) =  h\left(\frac{\alpha}{2}\right) + h\left(\frac{\beta}{2}\right)= ( h(\alpha)+h(\beta) )  h\left(\frac12\right).$$

$(c)$ We know from the above and Proposition \ref{p def function h} that $h:\mathbb{C}\to \mathbb{C}$ is an additive bijection satisfying \begin{enumerate}[$(1)$] \item $h(0)=0$, $h(1) =1$;
\item $h(\alpha \overline{\beta}) = h(\alpha) \overline{h(\beta)}$ for all $\alpha, \beta$;
\item $h(\overline{\alpha}) = \overline{h(\alpha)}$ for all $\alpha$.
\end{enumerate} We observe that $h(\alpha {\beta}) = h(\alpha \overline{\overline{\beta}}) = h(\alpha) \overline{h(\overline{\beta})} = h(\alpha) {h(\beta)}$ for all $\alpha, \beta,$ that is, $h$ is multiplicative.\smallskip

We shall next prove that $h(-z) =- h(z),$ for all $z\in \mathbb{C}$. Indeed, by the additivity of $h$ we get $0 = h(0) = h(z-z) = h (z) + h(-z),$ and hence $h(-z) = -h(z)$.\smallskip

Now, we shall show that $h(\mathbb{R})\subseteq \mathbb{R}$ and $h(i \mathbb{R})\subseteq i \mathbb{R}$. Namely, if $x\in \mathbb{R}$ then, by $(4)$, we have $$h(x ) = h(\overline{x}) = \overline{h(x)},$$ $$ h(i x ) = h(- \overline{i x})=  - h( \overline{i x}) = - \overline{h(i x)},$$ which proves the desired statement.\smallskip

Lemma \ref{l additivity on hermitian gives lienarity on hermitian} assures that $h|_{\mathbb{R}} : \mathbb{R}\to \mathbb{R}$ is a linear mapping, and thus, $h(x) = x,$ for all $x\in \mathbb{R}$. \smallskip

Finally, we conclude from $(2)$ that $|h(i)|^2 = h(i) \overline{h(i)} = h(i) h(\overline{i}) = h( i \overline{i}) = h(1) =1$. Therefore, since $i \mathbb{R}\ni h(i)$, it follows that $h(i) \in \{i,-i\}$. If $h(i) = i$ (respectively, $h(i)= -i$) we get from the above properties that
$$ h(x + i y) = h(x) + h(i y) = x + h(i) h(y) = x + i y,$$ (respectively, $ h(x + i y) = h(x) + h(i y) = x + h(i) h(y) = x - i y)$ for all $x,y\in \mathbb{R}$, which finishes the proof.
\end{proof}

We can now estate our main conclusion about bijections from $\mathcal{B}(H)$ onto $\mathcal{B}(K)$ satisfying hypothesis $(h.3)$ in Problem \ref{problems} for a fixed  $\lambda$ in $(0,1]$.

\begin{theorem}\label{thm h3 in problems} Let $H$ and $K$ be complex Hilbert spaces with dim$(H)\geq 2$. Suppose $\Phi:\mathcal{B}(H)\to \mathcal{B}(K)$ a bijection. Then the following statements are equivalent:
\begin{enumerate}[$(a)$]\item Satisfies hypothesis $(h.3)$ in Problem \ref{problems} for a fixed  $\lambda$ in $[0,1]$;
\item $\Phi$ is a complex linear or conjugate linear $^*$-isomorphism, that is, $\Phi$ is a complex linear or conjugate linear bijection, $\Phi(a b) = \Phi (a) \Phi(b)$, and $\Phi (a^*) = \Phi (a)^*$, for all $a,b\in \mathcal{B}(H)$.
\end{enumerate}
 \end{theorem}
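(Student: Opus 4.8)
The equivalence is obtained by proving the two implications. That $(b)\Rightarrow(a)$ is immediate from \eqref{eq linear or conjugate linear *isomorphisms preserves lambda aluthge products}, so the point is $(a)\Rightarrow(b)$. The case $\lambda=0$ is settled at once: the computation preceding Corollary \ref{c Hakeda Saito 86 for (4) or (3)} shows that a bijection satisfying $(h.3)$ with $\lambda=0$ is unital, multiplicative and $^*$-preserving, hence --- since $\mathcal{B}(H)$ has no abelian direct summand --- Theorem \ref{thm Hakeda 86} presents it as an orthogonal sum of a complex linear and a conjugate linear $^*$-isomorphism, and triviality of the center of $\mathcal{B}(H)$ forces one of the two summands to vanish. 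From now on I assume $\lambda\in(0,1]$, so that Propositions \ref{p basic properties} and \ref{p def function h}, Corollary \ref{c hermitian and skew symmetric for h3 and h4}, and Lemmas \ref{l tecnical lemma linear algebra in M2(C)}, \ref{lem1.5} and \ref{l additivity of h} are all available.

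The first step is to control $\Phi$ on rank-one projections. Let $h\colon\mathbb{C}\to\mathbb{C}$ be the bijection of Proposition \ref{p def function h}; by Lemma \ref{l additivity of h}$(c)$ it is either the identity or the complex conjugation, so in particular it fixes $\mathbb{R}$ pointwise. By Proposition \ref{p basic properties}$(g)$, $\Phi$ restricts to a bijection from the rank-one projections of $\mathcal{B}(H)$ onto those of $\mathcal{B}(K)$, and evaluating Lemma \ref{l additivity of h}$(a)$ at $a=q$ a rank-one projection gives $\mathrm{tr}(\Phi(p)\Phi(q))=\varphi_{\Phi(p)}(\Phi(q))=h(\varphi_p(q))=\mathrm{tr}(pq)$ for all rank-one projections $p,q$; that is, this bijection preserves all transition probabilities. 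Since $\dim(H)\geq2$, Wigner's theorem (in its form for bijections between the sets of rank-one projections) furnishes a unitary or antiunitary operator $U\colon H\to K$ with $\Phi(\xi\otimes\xi)=U\xi\otimes U\xi$ for every unit vector $\xi\in H$.

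Next I would feed $U$ back into Lemma \ref{l additivity of h}$(a)$ applied to an arbitrary $a\in\mathcal{B}(H)$: reading off the coefficient of the rank-one projection $\Phi(\xi\otimes\xi)=U\xi\otimes U\xi$ on both sides yields $\langle\Phi(a)U\xi,U\xi\rangle=h(\langle a\xi,\xi\rangle)$ for every unit $\xi$, and as $U$ is a bijection of the unit spheres this reads $\langle\Phi(a)\zeta,\zeta\rangle=h(\langle aU^{*}\zeta,U^{*}\zeta\rangle)$ for all $\zeta\in K$. Since $\langle aU^{*}\zeta,U^{*}\zeta\rangle$ equals $\langle UaU^{*}\zeta,\zeta\rangle$ when $U$ is unitary and its complex conjugate when $U$ is antiunitary, and $h$ is either the identity or the conjugation, in each of the four resulting cases the quadratic form of $\Phi(a)$ coincides with that of $UaU^{*}$ or with that of $(UaU^{*})^{*}=Ua^{*}U^{*}$; since a bounded operator on a complex Hilbert space is determined by its quadratic form, this forces either $\Phi=\mathrm{Ad}_U$ on all of $\mathcal{B}(H)$ (when $U$ is unitary with $h=\mathrm{id}$, or $U$ antiunitary with $h$ the conjugation) or $\Phi=\mathrm{Ad}_U\circ(\,\cdot\,)^{*}$ (in the remaining two cases). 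In the first alternative $\mathrm{Ad}_U$ is precisely a complex linear (if $U$ is unitary) or conjugate linear (if $U$ is antiunitary) $^*$-isomorphism, which is the desired conclusion.

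Finally, the second alternative must be excluded, and this is the only genuinely delicate point at the level of the theorem. If $\Phi=\mathrm{Ad}_U\circ(\,\cdot\,)^{*}$ then, by Proposition \ref{p basic properties}$(j)$, $\Phi(\Delta_\lambda(a))=\Delta_\lambda(\Phi(a))$ for all $a$; since $\mathrm{Ad}_U$ is a (complex or conjugate) linear $^*$-isomorphism it commutes with $\Delta_\lambda$ by \eqref{eq linear or conjugate linear *isomorphisms preserve lambda Aluthge transforms}, and cancelling it leaves $\Delta_\lambda(a)^{*}=\Delta_\lambda(a^{*})$ for every $a$. This is false for $\lambda\in(0,1]$: taking $a=\left(\begin{smallmatrix}0&0\\1&1\end{smallmatrix}\right)$ inside a $2\times2$ corner of $\mathcal{B}(H)$ one computes $\Delta_\lambda(a)=\tfrac12\left(\begin{smallmatrix}1&1\\1&1\end{smallmatrix}\right)$ while $\Delta_\lambda(a^{*})=\left(\begin{smallmatrix}0&0\\0&1\end{smallmatrix}\right)$, which differ. (The same exclusion follows as well from the type $\mathrm{I}_2$ part of the Botelho--Moln{\'a}r--Nagy classification \cite{BoteMolNag2016}.) Hence only the first alternative survives, and $\Phi=\mathrm{Ad}_U$ is a complex linear or conjugate linear $^*$-isomorphism, which completes the proof. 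The substantive difficulties of the whole scheme are concentrated in the preparatory results --- Proposition \ref{p def function h} and Lemma \ref{l additivity of h} (where $\dim(H)\geq2$ is used to make $h$ additive) and the $M_2(\mathbb{C})$ computation of Lemma \ref{l tecnical lemma linear algebra in M2(C)}; once these are in hand the theorem is essentially an assembly, and its only subtle step is the exclusion of the orientation-reversing, anti-multiplicative alternative above, which is exactly where the hypothesis $\lambda>0$ (as opposed to $\lambda=0$, handled via Hakeda's theorem) enters.
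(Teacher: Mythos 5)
Your proposal is correct, but its endgame is genuinely different from the paper's. Both arguments share the reduction to $\lambda\in(0,1]$, the function $h$ of Proposition \ref{p def function h}, and Lemma \ref{l additivity of h}; from that point the paper first proves that $\Phi$ is additive (pure states of $\mathcal{B}(K)$ separate points, so the identity $\varphi_{\Phi(p)}(\Phi(a+b))=\varphi_{\Phi(p)}(\Phi(a))+\varphi_{\Phi(p)}(\Phi(b))$ coming from Lemma \ref{l additivity of h}$(a)$ yields $\Phi(a+b)=\Phi(a)+\Phi(b)$), deduces complex or conjugate linearity from Proposition \ref{p def function h}$(e)$, and then invokes the Botelho--Moln{\'a}r--Nagy classification of linear bijections commuting with $\Delta_{\lambda}$ \cite{BoteMolNag2016}, which forces a separate discussion of the type I$_2$ case; the paper also sketches an alternative ending via Moln{\'a}r's theorem on order- and orthogonality-preserving bijections of partial isometries \cite{Mol2002}. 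You instead read Lemma \ref{l additivity of h}$(a)$ at rank-one projections to see that $\Phi$ preserves transition probabilities, apply Wigner's theorem to produce a unitary or antiunitary $U$, and then recover $\Phi(a)$ for arbitrary $a$ from its quadratic form via the same lemma. This makes additivity a consequence rather than a step, treats dim$(H)=2$ uniformly with no exceptional case, and replaces the appeal to \cite{BoteMolNag2016} by a classical external input. Your exclusion of the two anti-multiplicative alternatives --- cancelling $\mathrm{Ad}_U$ against $\Delta_{\lambda}$ and exhibiting a rank-one $a$ with $\Delta_{\lambda}(a)^*\neq\Delta_{\lambda}(a^*)$ --- is essentially the same device the paper uses at the very end of Section \ref{sec:4} to rule out anti-automorphisms in the Moln{\'a}r-based variant, and your explicit $2\times 2$ computation checks out (the Aluthge transform of an element of a corner may indeed be computed inside the corner, as the paper notes in the introduction). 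The only point worth stating with a little more care is the version of Wigner's theorem you invoke: it is the full transition-probability-preserving form for a bijection between the rank-one projections of two possibly different Hilbert spaces, valid for dim$(H)\geq 2$, not the orthogonality-only (Uhlhorn) version, which would require dimension at least $3$; as stated your use of it is legitimate.
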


\begin{proof} The implication $(b)\Rightarrow(a)$ is an immediate consequence of what we commented in \eqref{eq linear or conjugate linear *isomorphisms preserves lambda aluthge products}.\smallskip

$(a)\Rightarrow(b)$ The case $\lambda=0$ is solved by J. Hakeda \cite{Hak86a} (see Corollary \ref{c Hakeda Saito 86 for (4) or (3)}). We shall assume that $\lambda\in (0,1]$. By Lemma \ref{l additivity of h}, the mapping $h:\mathbb{C}\to \mathbb{C}$ defined by Proposition \ref{p def function h} is the identity or the conjugation on $\mathbb{C}$.\smallskip

We shall next show that $\Phi$ is linear or anti-linear. Let us pick a couple of norm-one elements $\xi\in H$ and $\eta\in K$ such that $\Phi(\xi\otimes \xi)=\eta\otimes \eta.$ Given $a,$ $b\in\mathcal{B}(H),$ Lemma \ref{l additivity of h}$(a)$ we have
$$\varphi_{\Phi(p)} (\Phi(a+b)) \Phi(p)=\Phi (p) \Phi(a+b) \Phi(p)  = h (\varphi_p (a+b)) \Phi (p)$$ $$=h (\varphi_p (a)) \Phi (p) + h (\varphi_p (b)) \Phi (p)=\varphi_{\Phi(p)} (\Phi(a)) \Phi(p) + \varphi_{\Phi(p)} (\Phi(b)) \Phi(p), $$ which implies that $\varphi_{\Phi(p)} (\Phi(a+b)) =\varphi_{\Phi(p)} (\Phi(a) +\Phi(b)),$ for every minimal projection $p$ in $\mathcal{B}(H)$. Since $\Phi$ maps the set of all minimal projections in $\mathcal{B}(H)$ to the set of all minimal projections in $\mathcal{B}(K),$ and pure states on $\mathcal{B}(K)$ separate points in the latter von Neumann algebra (see, for example, \cite[Lemma I.9.10 and Corollary I.9.11]{Davidson96}), we conclude that $\Phi(a+b) =\Phi(a) +\Phi(b)$, that is, $\Phi$ is additive.\smallskip

Since, by Proposition \ref{p def function h}$(d)$, $\Phi(\alpha a)=h(\alpha)\Phi(a),$ for every hermitian operator $a\in\mathcal{B}(H)$ and every $\alpha\in\CC,$ we can easily deduce that $\Phi$ is complex linear if $h$ is the identity on $\mathbb{C},$ or conjugate linear if $h$ is the conjugation on $\mathbb{C}$. Furthermore, it follows from Corollary \ref{c hermitian and skew symmetric for h3 and h4} that $\Phi$ is a symmetric mapping, that is, $\Phi(a^*) = \Phi (a)^*$ for all $a\in \mathcal{B}(H)$. Furthermore, by Lemma \ref{l additivity of h}$(a)$ and \cite[Lemma I.9.10]{Davidson96}, we have $\|\Phi (a) \|\leq 2 \|a\|$, for all $a\in \mathcal{B}(H)$, which assures that $\Phi$ is continuous.\smallskip

If $\Phi$ is conjugate linear, we can always find a conjugate linear $^*$-automor-phism $\overline{\cdot}: \mathcal{B}(K)\to \mathcal{B}(K)$ (we can actually find a conjugate linear bijection satisfying $\overline{a b} = \overline{a} \overline{b}$, $\overline{\overline{a}}=a$ and $(\overline{a})^* = \overline{a^*}$ for all $a,b\in \mathcal{B}(K)$). We consider in this case the mapping $\overline{\Phi}:  \mathcal{B}(H)\to \mathcal{B}(K),$ $\overline{\Phi} (a) = \overline{\Phi(a)}$, which is complex linear and satisfies the hypothesis $(h.3)$ in Problem \ref{problems} for a fixed  $\lambda$ in $[0,1]$. If we show that $\overline{\Phi}$ is a linear $^*$-isomorphism then $\Phi$ will be a conjugate linear $^*$-isomorphism.\smallskip

We are in an optimal position to apply the results obtained by F. Botelho, L. Moln{\'a}r and G. Nagy  \cite{BoteMolNag2016}. If $\Phi$ (respectively, $\overline{\Phi}$) is complex linear, Proposition \ref{p basic properties}$(j)$ guarantees that $\Phi$ preserves $\lambda$-Aluthge transforms, and thus, by applying \cite[Theorem 1]{BoteMolNag2016} (see also the introduction), we deduce that one of the next statements holds:
\begin{enumerate}[$(a)$]\item If dim$(H)>2$ there exists a complex linear $^*$-isomorphism $\Theta : \mathcal{B}(H)\to \mathcal{B}(K)$ and a nonzero scalar $c\in \mathbb{C}$ such that $\Phi (a) = c \Theta (a)$ (respectively, $\overline{\Phi} (a) = c \Theta (a)$), for all $a\in \mathcal{B}(H)$. Since $\Phi(\textbf{1})=\textbf{1}$ (respectively, $\overline{\Phi}(\textbf{1})=\textbf{1}$), it follows that $c=1$ and $\Phi = \Theta$ (respectively, $\overline{\Phi}= \Theta$) is a complex linear $^*$-isomorphism.
\item If dim$(H)=2$, then dim$(K)=2$ and there exists a complex linear  $^*$-isomorphism $\Theta : \mathcal{B}(H)\to \mathcal{B}(K)$ and a nonzero scalar $c\in \mathbb{C}$ such that $\Phi$ (respectively, $\overline{\Phi}$) is either of the form $$ \Phi(a) = c \ \Theta(a) \hbox{ (respectively, $\overline{\Phi} (a) = c \ \Theta(a)$)}, \hbox{ for all } a\in \mathcal{B}(H),$$ or of the form
$$\Phi ( a ) = c \left(\Theta(a^t) - \hbox{Tr}(a) \textbf{1}\right), \hbox{ for all } a\in \mathcal{B}(H),$$  (respectively, $\overline{\Phi} (a) = c \left(\Theta(a^t) - \hbox{Tr}(a) \textbf{1}\right)$ for all $a\in \mathcal{B} (H)$), where $^t$ stands for the transpose and Tr stands for the normalized trace functional on matrices.\smallskip

In the first case, the condition $\Phi(\textbf{1})=\textbf{1}$ (respectively, $\overline{\Phi}(\textbf{1})=\textbf{1}$), implies that $\Phi = \Theta$ (respectively, $\overline{\Phi}= \Theta$) is a complex linear $^*$-isomorphism.\smallskip

In the second case we have $\textbf{1}=\Phi(\textbf{1}) =  c \left(\Theta(\textbf{1}^t) - \hbox{Tr}(\textbf{1}) \textbf{1}\right) =\textbf{1},$ which is impossible.
\end{enumerate}
\end{proof}

Let $\Phi: M_n(\mathbb{C})\to M_n(\mathbb{C})$ be the mapping defined by $\Phi(a) =\Phi ((a_{ij})_{ij}) =  (\overline{a_{ij}})_{ij} = \overline{a}$. It is easy to see that $\Phi$ is a conjugate linear $^*$-automorphism on $M_n(\mathbb{C})$ and satisfies all hypothesis $(h.1)$-to-$(h.3)$ in Problem \ref{problems}, but we cannot conclude that $\Phi$ is complex linear.\smallskip

We shall conclude this note by presenting an additional connection with another result due to L. Moln{\'a}r in \cite{Mol2002}. We recall that elements $a,b$ in a C$^*$-algebra $A$ are called \emph{orthogonal} (written $a\perp b$) if $a b^* = b^* a=0$. This definition is consistent with the notion applied before for projections. The set of all partial isometries in $A$ (denoted by $\mathcal{P}I(A)$) can be equipped with a partial order ``$\leq$'' defined by $e\leq v$ if $v-e$ is a partial isometry orthogonal to $e$ (equivalently, $ee^* \leq vv^*$ and $e^*e\leq v^*v$). We shall write $\mathcal{P}I(H)$ for the set $\mathcal{P}I(\mathcal{B}(H))$. Theorem 1 in \cite{Mol2002} proves that, for any complex Hilbert space $H$ with dim$(H)\geq 3$, and every bijective transformation $\Psi : \mathcal{P}I(H)\to \mathcal{P}I(H)$ which preserves the partial ordering and the orthogonality between partial isometries in both directions, and is continuous (in the operator norm) at a single element of $\mathcal{P}I(H)$ different from $0$, then $\Psi$ can be written in one of the following forms:\begin{enumerate}[$(a)$] \item $\Psi (e) = T_1 (e)$ for all $e\in \mathcal{P}I(H)$, where $T_1$ is a $^*$-automorphism on $\mathcal{B}(H)$;
\item $\Psi (e) = T_2 (e)$ for all $e\in \mathcal{P}I(H)$, where $T_2$ is a $^*$-anti-automorphism on $\mathcal{B}(H)$;
\item $\Psi (e) = T_1 (e^*)$ for all $e\in \mathcal{P}I(H)$, where $T_1$ is a $^*$-automorphism on $\mathcal{B}(H)$;
\item $\Psi (e) = T_2 (e^*)$ for all $e\in \mathcal{P}I(H)$, where $T_2$ is a $^*$-anti-automorphism on $\mathcal{B}(H)$.
\end{enumerate}

For bijective mappings on $\mathcal{B}(H)$ satisfying  hypothesis $(h.3)$ in Problem \ref{problems} the above result of Moln{\'a}r can be applied to replace \cite[Theorem 1]{BoteMolNag2016} in the final part of the argument exhibited in the proof of Theorem \ref{thm h3 in problems}. Namely,
suppose $\Phi:\mathcal{B}(H)\to \mathcal{B}(H)$ is a bijection satisfying  hypothesis $(h.3)$ in Problem \ref{problems} for a fixed  $\lambda$ in $[0,1]$. By applying the arguments in the first part of the proof of Theorem \ref{thm h3 in problems} $(a)\Rightarrow (b)$, we can conclude that $\Phi$ is a continuous complex linear or a conjugate linear symmetric map (i.e. $\Phi (a^*) = \Phi (a^*)$ for all $a\in \mathcal{B}(H)$).\smallskip

Let us take a partial isometry $e\in \mathcal{P}I(H)$, by Proposition \ref{p basic properties}$(b)$ and $(f)$, we deduce that $$\Phi (e) \Phi(e)^*= \Phi(e e^*)\in \hbox{Proj} (\mathcal{B} (H)),$$ and hence $\Phi (e)$ is a partial isometry. We have therefore proved that \begin{equation}\label{eq Phi preserves partial isometries} \Phi \left(\mathcal{P}I(H) \right) = \mathcal{P}I(H),\hbox{  and } \Phi|_{\mathcal{P}I(H)}: \mathcal{P}I(H) \to \mathcal{P}I(H) \hbox{ is a bijection}.
\end{equation}

Suppose $e$ and $v$ are orthogonal partial isometries in $\mathcal{B} (H)$. Since $ee^*$ and $vv^*$ are orthogonal projections, Proposition \ref{p basic properties}$(f)$ assures that $\Phi (e) \Phi(e)^*= \Phi(e e^*)\perp  \Phi(v v^*) = \Phi (v) \Phi(v)^*)$. On the other hand, since  $e^*e$ and $v^*v$ are orthogonal projections too, by Corollary \ref{c hermitian and skew symmetric for h3 and h4} and Proposition \ref{p basic properties}$(b)$ and $(f),$ we know that
$$\Phi (e)^* \Phi(e)= \Phi (e^*) \Phi(e^*)^*= \Phi(e^* e)\perp  \Phi(v^* v) = \Phi (v^*) \Phi(v^*)^* = \Phi (v)^* \Phi(v).$$ We have shown that \begin{equation}\label{eq Phi preserves orthogonality among partial isometries} e,v\in \mathcal{P}I(H) \hbox{ with } e\perp v \Rightarrow \Phi(e) \perp \Phi (v).
\end{equation} Finally if $e,v \in \mathcal{P}I(H)$ with $e\leq v$, we deduce from the real linearity of $\Phi,$ \eqref{eq Phi preserves partial isometries}, and \eqref{eq Phi preserves orthogonality among partial isometries} that $\Phi (v)-\Phi(e) = \Phi(v-e)$ is a partial isometry which is orthogonal to $\Phi(e)$, therefore, $\Phi(e)\leq \Phi(v)$. This proves that $\Phi$ preserves the partial ordering in $\mathcal{P}I(H)$. The previously commented result of Moln{\'a}r  \cite[Theorem 1]{Mol2002} proves that $\Phi$ can be written in one of the forms in $(a)$ to $(d)$. If $\Phi$ is a complex linear or a conjugate linear $^*$-anti-automorphism, the mapping $\Phi^* (a) = \Phi (a^*)$ ($a\in \mathcal{B} (H)$) is a conjugate linear or complex linear $^*$-automorphism, and hence $$\Phi( \Delta_{\lambda} (a)^*) = \Phi^* \Delta_{\lambda} (a) =  \Delta_{\lambda} (\Phi^*(a)) = \Delta_{\lambda} (\Phi(a^*)) = \Delta_{\lambda} (\Phi(a)^*),$$ for all $a\in \mathcal{B} (H)$ (compare \eqref{eq linear or conjugate linear *isomorphisms preserve lambda Aluthge transforms}). Applying Proposition \ref{p basic properties}$(k)$ we derive at $$\Delta_{\lambda}(\Phi(a)^*)= \Phi(\Delta_{\lambda}(a^*)),$$ for all $a\in \mathcal{B} (H)$. Combining the last two identities with the bijectivity of $\Phi$, we deduce that $\Delta_{\lambda}(a^*) = \Delta_{\lambda} (a)^*$, for all $a\in \mathcal{B} (H)$, which is impossible, because for $a=\xi \otimes \eta,$ where $\xi$ and $\eta$ are norm-one linearly independent and non-orthogonal vectors in $H,$ \cite[Proposition 2.1]{Chabbabi2017product} implies that
$$\Delta_{\lambda}(a)^*=\langle\eta| \xi \rangle (\eta \otimes \eta)\neq \Delta_{\lambda}(a^*)=\langle \eta | \xi \rangle  (\xi\otimes \xi).$$ We have therefore proved that $\Phi$ must be a complex linear or a conjugate linear $^*$-automorphism.  \medskip\medskip

\textbf{Acknowledgements} First author supported by the Higher Education and Scientific Research Ministry in Tunisia, UR11ES52~: Analyse, G{\'e}om{\'e}trie et Applications. Second author partially supported by the Spanish Ministry of Economy and Competitiveness (MINECO) and European Regional Development Fund project no. MTM2014-58984-P and Junta de Andaluc\'{\i}a grant FQM375.\smallskip

\end{document}